\title{Resurrection axioms and uplifting cardinals}
\author[Hamkins]{Joel David Hamkins}
 \address[J. D. Hamkins]
        {Mathematics, Philosophy, Computer Science,
          The Graduate Center of The City University of New York,
          365 Fifth Avenue, New York, NY 10016
          \&
          Mathematics,
          College of Staten Island of CUNY,
          Staten Island, NY 10314}
\email{jhamkins@gc.cuny.edu}
\urladdr{http://jdh.hamkins.org}
\author[Johnstone]{Thomas A. Johnstone}
 \address[T. A. Johnstone]
         {Mathematics,
          New York City College of Technology of CUNY, Brooklyn, 
          NY 11201}
 \email{tjohnstone@citytech.cuny.edu}
 \urladdr{http://websupport1.citytech.cuny.edu/faculty/tjohnstone/}
\thanks{\tiny The authors would like to apologize for the long delay in bringing this work to completion; we've studied the resurrection idea since 2000, and the main equiconsistency with uplifting cardinals was proved at Bedlewo in 2007; and we've given numerous talks on it since then. The research of the first author has been supported in part by NSF grant DMS-0800762, PSC-CUNY grant 64732-00-42, Simons Foundation grant 209252, the Netherlands Organization for Scientific Research NWO \textsl{bezoekersbeurs} \textsf{B 62-619 2006/00782/IB}, and he is grateful to the Institute of Logic, Language and Computation at Universiteit van Amsterdam for the support of a Visiting Professorship during his sabbatical there in 2007, where the two authors worked together. The research of the second author has been supported by a CUNY Scholar Incentive Award, PSC-CUNY research grants \#62803-00-40 and \#64682-00-42, and he is grateful to the Kurt \Godel\ Research Center at the University of Vienna for the support of his 2009-10 post-doctoral position there, funded in part by grants P20835-N13 and P21968-N13 from the FWF Austrian Science Fund. The authors would like to thank the referee for helpful comments and suggestions that have been incorporated into this article. Commentary concerning this article can be made at \url{http://jdh.hamkins.org/resurrection-axioms-and-uplifting-cardinals}.}
\newtheorem{theorem}{Theorem}
\newtheorem{maintheorem}[theorem]{Main Theorem}
\newtheorem{lemma}[theorem]{Lemma}
\newtheorem{question}[theorem]{Question}
\newtheorem{observation}[theorem]{Observation}
\newtheorem{fact}[theorem]{Fact}
\newtheorem{definition}[theorem]{Definition}
\newtheorem{maindefinition}[theorem]{Main Definition}
\newcommand{\QED}{\end{proof}}
\def\proclaim[#1]{{\bf #1}}
\def\BF#1.{{\bf #1.}}
\newcommand{\Godel}{G\"odel}
\newcommand{\Levy}{L\'{e}vy}
\newcommand{\Lowenheim}{L\"owenheim}
\newcommand{\Velickovic}{Veli\v ckovi\'c}
\newcommand{\B}{{\mathbb B}}
\renewcommand{\P}{{\mathbb P}}
\newcommand{\Q}{{\mathbb Q}}
\newcommand{\R}{{\mathbb R}}
\newcommand{\continuum}{\mathfrak{c}}
\newcommand{\Ptail}{{\P_{\!\scriptscriptstyle\rm tail}}}
\newcommand{\Vbar}{{\overline{V}}}
\newcommand{\Qdot}{{\dot\Q}}
\newcommand{\Rdot}{{\dot\R}}
\newcommand{\one}{\mathop{1\hskip-3pt {\rm l}}}
\newcommand{\from}{\mathbin{\vbox{\baselineskip=2pt\lineskiplimit=0pt
                         \hbox{.}\hbox{.}\hbox{.}}}}
\newcommand{\of}{\subseteq}
\newcommand{\ofnoteq}{\subsetneq}
\newcommand{\elesub}{\prec}
\newcommand{\dom}{\mathop{\rm dom}}
\newcommand{\ran}{\mathop{\rm ran}}
\newcommand{\Add}{\mathop{\rm Add}}
\newcommand{\Coll}{\mathop{\rm Coll}}
\newcommand{\Con}{\mathop{{\rm Con}}}
\newcommand{\image}{\mathbin{\hbox{\tt\char'42}}}
\newcommand{\plus}{{+}}
\newcommand{\restrict}{\upharpoonright} 
\newcommand{\satisfies}{\models}
\newcommand{\forces}{\Vdash}
\newcommand{\intersect}{\cap}
\newcommand{\LaverDiamond}{\mathop{\hbox{\line(0,1){10}\line(1,0){8}\line(-4,5){8}}\hskip 1pt}\nolimits}
\newcommand{\LD}{\LaverDiamond}
\newcommand{\LDuplift}{\LD^{\hbox{\!\!\tiny uplift}}}
\newcommand{\smalllt}{\mathrel{\mathchoice{\raise2pt\hbox{$\scriptstyle<$}}{\raise1pt\hbox{$\scriptstyle<$}}{\raise0pt\hbox{$\scriptscriptstyle<$}}{\scriptscriptstyle<}}}
\newcommand{\smallleq}{\mathrel{\mathchoice{\raise2pt\hbox{$\scriptstyle\leq$}}{\raise1pt\hbox{$\scriptstyle\leq$}}{\raise1pt\hbox{$\scriptscriptstyle\leq$}}{\scriptscriptstyle\leq}}}
\newcommand{\lt}{\smalllt}
\newcommand{\ltkappa}{{{\smalllt}\kappa}}
\newcommand{\ltdelta}{{{\smalllt}\delta}}
\newcommand{\boolval}[1]{\mathopen{\lbrack\!\lbrack}\,#1\,\mathclose{\rbrack\!\rbrack}}
\def\[#1]{\boolval{#1}}
\newbox\gnBoxA
\newdimen\gnCornerHgt
\newdimen\gnArgHgt
\def\gcode #1{%
\setbox\gnBoxA=\hbox{$#1$}%
\gnArgHgt=\ht\gnBoxA%
\ifnum     \gnArgHgt<\gnCornerHgt \gnArgHgt=0pt%
\else \advance \gnArgHgt by -\gnCornerHgt%
\fi \raise\gnArgHgt\hbox{\tiny$\ulcorner$} \box\gnBoxA %
\raise\gnArgHgt\hbox{\tiny$\urcorner$}}
\newcommand{\UnderTilde}[1]{{\setbox1=\hbox{$#1$}\baselineskip=0pt\vtop{\hbox{$#1$}\hbox to\wd1{\hfil$\sim$\hfil}}}{}}
\newcommand{\Undertilde}[1]{{\setbox1=\hbox{$#1$}\baselineskip=0pt\vtop{\hbox{$#1$}\hbox to\wd1{\hfil$\scriptstyle\sim$\hfil}}}{}}
\newcommand{\undertilde}[1]{{\setbox1=\hbox{$#1$}\baselineskip=0pt\vtop{\hbox{$#1$}\hbox to\wd1{\hfil$\scriptscriptstyle\sim$\hfil}}}{}}
\newcommand{\UnderdTilde}[1]{{\setbox1=\hbox{$#1$}\baselineskip=0pt\vtop{\hbox{$#1$}\hbox to\wd1{\hfil$\approx$\hfil}}}{}}
\newcommand{\Underdtilde}[1]{{\setbox1=\hbox{$#1$}\baselineskip=0pt\vtop{\hbox{$#1$}\hbox to\wd1{\hfil\scriptsize$\approx$\hfil}}}{}}
\newcommand{\st}{\mid}
\renewcommand{\th}{{\hbox{\scriptsize th}}}
\renewcommand{\iff}{\mathrel{\leftrightarrow}}
\newcommand{\minus}{\setminus}
\def\<#1>{\langle#1\rangle}
\newcommand{\cp}{\mathop{\rm cp}}
\newcommand{\ORD}{\mathop{{\rm Ord}}}
\newcommand{\Ord}{\mathop{{\rm Ord}}}
\newcommand{\ZFC}{{\rm ZFC}}
\newcommand{\CH}{{\rm CH}}
\newcommand{\MA}{{\rm MA}}
\newcommand{\RA}{{\rm RA}}
\newcommand{\MM}{{\rm MM}}
\newcommand{\BMM}{{\rm BMM}}
\newcommand{\PFA}{{\rm PFA}}
\newcommand{\BPFA}{{\rm BPFA}}
\newcommand{\SPFA}{{\rm SPFA}}
\newcommand{\BSPFA}{{\rm BSPFA}}
\newcommand{\MP}{{\rm MP}}
\newcommand{\HOD}{{\rm HOD}}
\newcommand{\ccc}{{{\rm ccc}}}
\newcommand{\cell}[1]{\boxit{\hbox to 17pt{\strut\hfil$#1$\hfil}}}
\newcommand{\head}[2]{\lower2pt\vbox{\hbox{\strut\footnotesize\it\hskip3pt#2}\boxit{\cell#1}}}
\newcommand{\boxit}[1]{\setbox4=\hbox{\kern2pt#1\kern2pt}\hbox{\vrule\vbox{\hrule\kern2pt\box4\kern2pt\hrule}\vrule}}
\newcommand{\Col}[3]{\hbox{\vbox{\baselineskip=0pt\parskip=0pt\cell#1\cell#2\cell#3}}}
\newcommand{\tapenames}{\raise 5pt\vbox to .7in{\hbox to .8in{\it\hfill input: \strut}\vfill\hbox to
.8in{\it\hfill scratch: \strut}\vfill\hbox to .8in{\it\hfill output: \strut}}}
\newcommand{\Head}[4]{\lower2pt\vbox{\hbox to25pt{\strut\footnotesize\it\hfill#4\hfill}\boxit{\Col#1#2#3}}}
\newcommand{\Dots}{\raise 5pt\vbox to .7in{\hbox{\ $\cdots$\strut}\vfill\hbox{\ $\cdots$\strut}\vfill\hbox{\
$\cdots$\strut}}}
\newcommand{\df}{\it} 
\newcommand{\RAall}{\RA(\text{all})}
\newcommand{\smallgt}{\mathrel{\mathchoice{\raise2pt\hbox{$\scriptstyle>$}}{\raise1pt\hbox{$\scriptstyle>$}}{\raise0pt\hbox{$\scriptscriptstyle>$}}{\scriptscriptstyle>}}}
\newcommand{\smallgeq}{\mathrel{\mathchoice{\raise2pt\hbox{$\scriptstyle\geq$}}{\raise1pt\hbox{$\scriptstyle\geq$}}{\raise1pt\hbox{$\scriptscriptstyle\geq$}}{\scriptscriptstyle\geq}}}
\newcommand{\wRA}{{\rm wRA}}
\newcommand{\proper}{\text{proper}}
\newcommand{\semiproper}{\text{semi-proper}}
\newcommand{\Hc}{H_\continuum}
\newcommand{\FA}{\rm FA}
\newcommand{\BFA}{\rm BFA}
\newcommand{\BAAFA}{\rm BAAFA}
\begin{document}

\begin{abstract}
 We introduce the resurrection axioms, a new class of forcing axioms, and the uplifting cardinals, a new large cardinal notion, and prove that various instances of the resurrection axioms are equiconsistent over \ZFC\ with the existence of an uplifting cardinal.
\end{abstract}

\maketitle

\section{Introduction}\label{S.Introduction}

Many classical forcing axioms can be viewed, at least informally, as the claim that the universe is existentially closed in its forcing extensions, for the axioms generally assert that certain kinds of filters, which could exist in a forcing extension $V[G]$, exist already in $V$. In several instances this informal perspective is realized more formally: Martin's axiom is equivalent to the assertion that $\Hc$ is existentially closed in all c.c.c.~forcing extensions of the universe, meaning that $\Hc\elesub_{\Sigma_1}V[G]$ for all such extensions; the bounded proper forcing axiom is equivalent to the assertion that $H_{\omega_2}$ is existentially closed in all proper forcing extensions, or $H_{\omega_2}\elesub_{\Sigma_1}V[G]$; and there are other similar instances.

In model theory, a submodel $M\of N$ is {\df existentially closed} in $N$ if existential assertions true in $N$ about parameters in $M$ are true already in $M$, that is, if $M$ is a $\Sigma_1$-elementary substructure of $N$, which we write as $M\elesub_{\Sigma_1} N$. Furthermore, in a general model-theoretic setting, existential closure is tightly connected with resurrection, the theme of this article.

\goodbreak
\begin{fact} \label{F:ExistClosIffResurr}
If $\mathcal{M}$ is a submodel of $\mathcal{N}$, then the following are equivalent.
\begin{enumerate}
 \item The model $\mathcal{M}$ is existentially closed in $\mathcal{N}$
\item $\mathcal{M}\of \mathcal{N}$ has resurrection. That is, there is a further extension $\mathcal{M}\of\mathcal{N}\of\mathcal{M}^+$ for which $\mathcal{M}\elesub\mathcal{M}^+$.
\end{enumerate}
\end{fact}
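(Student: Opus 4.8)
The plan is to prove the two implications separately: the direction (2)$\Rightarrow$(1) is a short absoluteness argument, while (1)$\Rightarrow$(2) rests on a compactness argument with diagrams, and it is the consistency step there that I expect to be the crux.

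For (2)$\Rightarrow$(1), suppose $\mathcal{M}\of\mathcal{N}\of\mathcal{M}^+$ with $\mathcal{M}\elesub\mathcal{M}^+$, and suppose $\mathcal{N}\satisfies\exists\bar x\,\varphi(\bar x,\bar a)$ for some quantifier-free $\varphi$ with parameters $\bar a$ from $\mathcal{M}$. Fixing a witness $\bar b$ in $\mathcal{N}$ and using that quantifier-free formulas are absolute between a structure and its substructures, we get $\mathcal{M}^+\satisfies\varphi(\bar b,\bar a)$ because $\mathcal{N}\of\mathcal{M}^+$, hence $\mathcal{M}^+\satisfies\exists\bar x\,\varphi(\bar x,\bar a)$; and since $\mathcal{M}\elesub\mathcal{M}^+$ this reflects down to $\mathcal{M}\satisfies\exists\bar x\,\varphi(\bar x,\bar a)$. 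As $\varphi$ and $\bar a$ were arbitrary, $\mathcal{M}\elesub_{\Sigma_1}\mathcal{N}$, that is, $\mathcal{M}$ is existentially closed in $\mathcal{N}$.

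For (1)$\Rightarrow$(2), I would argue by compactness. Work in the language expanded by a constant $c_d$ for each element $d$ of $\mathcal{N}$, and let $T$ be the elementary diagram of $\mathcal{M}$ (all sentences true in $\mathcal{M}$, with elements of $\mathcal{M}$ named by their constants) together with the atomic diagram of $\mathcal{N}$ (all atomic and negated atomic sentences true in $\mathcal{N}$ in these constants). Any model of $T$ yields, via the elementary diagram, an elementary embedding of $\mathcal{M}$ and, via the atomic diagram, an embedding of $\mathcal{N}$, and these agree on $\mathcal{M}$ since both are read off from the interpretations of the constants $c_a$ for $a\in\mathcal{M}$; replacing the model by an isomorphic copy, we may take both embeddings to be inclusions, so a model of $T$ is exactly a structure $\mathcal{M}^+$ with $\mathcal{M}\of\mathcal{N}\of\mathcal{M}^+$ and $\mathcal{M}\elesub\mathcal{M}^+$, which is resurrection.

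It remains to check that $T$ is consistent, and this is where existential closure is essential and where I expect the only real work. By compactness it suffices to satisfy a finite $T_0\of T$. Conjoining the sentences of $T_0$ from the elementary diagram of $\mathcal{M}$ gives a single sentence $\psi(\bar a)$ with $\bar a$ from $\mathcal{M}$ and $\mathcal{M}\satisfies\psi(\bar a)$; conjoining those from the atomic diagram of $\mathcal{N}$ gives a quantifier-free $\theta(\bar a,\bar b)$, where $\bar b$ lists the elements of $\mathcal{N}$ named in $T_0$ but not lying in $\mathcal{M}$, with $\mathcal{N}\satisfies\theta(\bar a,\bar b)$. Then $\mathcal{N}\satisfies\exists\bar y\,\theta(\bar a,\bar y)$, which is a $\Sigma_1$ assertion about parameters from $\mathcal{M}$, so existential closure $\mathcal{M}\elesub_{\Sigma_1}\mathcal{N}$ supplies a witness $\bar b'$ in $\mathcal{M}$ with $\mathcal{M}\satisfies\theta(\bar a,\bar b')$. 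Interpreting each $c_a$ as $a$ and each component of $\bar b$ as the corresponding component of $\bar b'$, the structure $\mathcal{M}$ satisfies both $\psi(\bar a)$ and $\theta(\bar a,\bar b')$, hence models $T_0$. Thus $T$ is finitely satisfiable, therefore consistent, which completes the proof.
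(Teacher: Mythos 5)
Your proof is correct and follows essentially the same route as the paper's: the converse direction by noting that witnesses in $\mathcal{N}$ persist to $\mathcal{M}^+$ and reflect down by elementarity, and the forward direction by applying compactness to the elementary diagram of $\mathcal{M}$ together with the atomic diagram of $\mathcal{N}$. The only difference is that you spell out the finite-satisfiability check that the paper leaves implicit, using existential closure to pull the witnesses for the conjoined atomic-diagram fragment into $\mathcal{M}$, which is exactly the intended argument.
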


\begin{proof} If $\mathcal{M}$ is existentially closed in $\mathcal{N}$, then by compactness the elementary diagram of $\mathcal{M}$ is consistent with the atomic diagram of $\mathcal{N}$, and any model of this combined theory provides the desired $\mathcal{M}^+$. Conversely, resurrection implies existential closure, since any witness in $\mathcal{N}$ still exists in $\mathcal{M}^+$, and so $\mathcal{M}$ has witnesses by the elementarity of $\mathcal{M}\elesub\mathcal{M}^+$.
\end{proof}

We call this ``resurrection,'' because although certain truths in $\mathcal{M}$ may no longer hold in the extension $\mathcal{N}$, these truths are nevertheless revived in light of $\mathcal{M}\elesub\mathcal{M}^+$ in the further extension to $\mathcal{M}^+$. A difficulty arises when applying fact~\ref{F:ExistClosIffResurr} in the context of forcing axioms, however, where set theorists seek principally to understand how a given model $\mathcal{M}$ relates to its forcing extensions, rather than to the more arbitrary extensions $\mathcal{M}^+$ arising from the compactness theorem. The problem is that when one restricts the class of permitted models $\mathcal{M}^+$ in fact~\ref{F:ExistClosIffResurr}, the equivalence of (1) and (2) can break down. Nevertheless, the converse implication $(2)\to(1)$ always holds: every instance of resurrection implies the corresponding instance of existential closure. This key observation leads us to the main unifying theme of this article, the idea that {\bf resurrection may allow us to formulate more robust forcing axioms} than existential closure or than combinatorial assertions about filters and dense sets.

We shall therefore introduce in this paper a spectrum of new forcing axioms utilizing the resurrection concept. We shall analyze the relations between these new forcing axioms and the classical axioms, and in many cases find their exact large cardinal consistency strength. The main idea is to replace a forcing axiom expressible as
$$\forall\,\Q\quad \mathcal{M}\elesub_{\Sigma_1}\mathcal{M}^{V[g]}, \text{ whenever } g\of\Q \text{ is } V \text{-generic}$$
with an axiom asserting full elementarity in a further extension:
$$ \forall\,\Q\,\,\,\exists\, \Rdot\quad \mathcal{M}\elesub \mathcal{M}^{V[g*h]}, \text{ whenever } g*h\of\Q*\Rdot \text{ is } V \text{-generic},$$
where in each case the forcing notions $\Q$ and $\Rdot$ will be of a certain specified type appropriate for that forcing axiom. We had mentioned earlier that under \MA\ or \BPFA\ (which implies $\continuum=\omega_2$), the set $\Hc$ is existentially closed in $V[g]$ for all c.c.c.~or proper forcing $g\of \Q$, respectively, and the case of $\mathcal{M}=\< \Hc,{\in}>$ is central.

\goodbreak

\begin{maindefinition}\label{D.ResurrectionAxioms}
\rm Let $\Gamma$ be a fixed definable class of forcing notions.
\begin{enumerate}
 \item The {\df resurrection axiom} $\RA(\Gamma)$ is the assertion that for every forcing notion $\Q\in\Gamma$ there is further forcing $\Rdot$, with $\forces_\Q\Rdot\in\Gamma$, such that if $g*h\of\Q*\Rdot$ is $V$-generic, then $\Hc\elesub \Hc^{V[g*h]}$.
\smallskip
 \item The {\df weak resurrection axiom} $\wRA(\Gamma)$ is the assertion that for every $\Q\in\Gamma$ there is further forcing $\Rdot$, such that if
      $g*h\of\Q*\Rdot$ is $V$-generic, then $\Hc\elesub \Hc^{V[g*h]}$.
\end{enumerate}
\end{maindefinition}

The difference between the full axiom and the weak form is that the full axiom insists that the second step of forcing $\Rdot$ is also chosen from $\Gamma$, as interpreted in the extension $V[g]$, while the weak axiom drops this restriction. When determining whether $\forces_\Q\Rdot\in\Gamma$, we give $\Gamma$ the {\it de dicto} reading, meaning that we reinterpret $\Gamma$ in the extension $V[g]$, using the definition of $\Gamma$ in that model, so the question is whether $\Rdot_g\in\Gamma^{V[g]}$. Definition~\ref{D.ResurrectionAxioms} is a special case of the more general resurrection axiom  $\RA(\Gamma_0,\Gamma_1)$, asserting that for every $\Q\in\Gamma_0$ there is further forcing $\Rdot$ with $\forces_\Q\Rdot\in\Gamma_1$, such that whenever $g*h\of\Q*\Rdot$ is $V$-generic, then  $\Hc\elesub \Hc^{V[g*h]}$; but we shall not analyze this more general axiom here.

We shall consider instances $\RA(\Gamma)$ and $\wRA(\Gamma)$ for various natural classes $\Gamma$ of forcing notions, such as $\RA(\ccc)$ and $\wRA(\ccc)$ for the class of all c.c.c.~posets, $\RA(\proper)$ and $\wRA(\proper)$ for the class of all proper posets, and $\RAall$ for the class of all posets. Note that $\wRA(\text{all})$ is the same as $\RAall$. If $\Gamma$ is any class of forcing notions, then $\RAall$ implies $\wRA(\Gamma)$, and $\RA(\Gamma)$ implies $\wRA(\Gamma)$. Moreover, if $\Gamma_1\subseteq \Gamma_2$ are two classes of forcing notions, then $\wRA(\Gamma_2)$ implies $\wRA(\Gamma_1)$, but in general $\RA(\Gamma_2)$ need not imply $\RA(\Gamma_1)$.

Regarding the existential-closure remark in the opening sentence of this article, we note that the full set-theoretic universe $V$ is never actually existentially closed in any nontrivial extension $V\ofnoteq W$. The point is that $W$ will have some set $z$ not in $V$, and an $\in$-minimal such $z$ will have $z\of y$ for some $y\in V$, meaning that $W$ thinks there is a subset of $y$ not in $P(y)^V$, but $V$ does not; this is a $\Sigma_1$ assertion about $P(y)^V$ showing that $V\not\elesub_{\Sigma_1} W$. Similarly, in a nontrivial set-forcing extension $V\of V[g]$ for $V$-generic $g\of\Q$, where $\mathcal{D}$ is the collection of all dense subsets of $\Q$ in $V$, the universe $V[g]$ contains a filter that meets all elements of $\mathcal{D}$, but $V$ does not; and again this is a $\Sigma_1$ assertion about $\mathcal{D}$. If the forcing extension $V\of V[g]$ adds a new real, then the collection $H_{\continuum^+}$ is not existentially closed in $V[g]$, because the forcing extension $V[g]$ contains a subset of $\omega$ that is not an element of $\mathcal{P}(\omega)^V$, but $H_{\continuum^+}$ does not. So if our forcing notions will be able to add reals, then we will not have any existential closure for $H_\kappa$ when $\continuum<\kappa$, pointing again at the centrality of the case of $\Hc$. Meanwhile, if $\kappa$ is any uncountable cardinal, then the \Levy\ absoluteness theorem shows that $H_\kappa\elesub_{\Sigma_1}V$, and so in particular, $\Hc$ is always existentially closed in $V$. As intended, the resurrection axioms imply that this structure $\Hc$ remains existentially closed with respect to forcing extensions:

\begin{observation}\label{O.wRAimpliesExistClosr} The weak resurrection axiom $\wRA(\Gamma)$ implies that $\Hc$ is existentially closed in all forcing extensions by posets from $\Gamma$. That is, $\wRA(\Gamma)$ implies that $\Hc\elesub_{\Sigma_1}V[g]$, whenever $\Q\in \Gamma$ and $g\of\Q$ is $V$-generic.
\end{observation}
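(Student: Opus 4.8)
The plan is to fix a poset $\Q \in \Gamma$ and a $V$-generic filter $g \of \Q$, and to establish $\Hc \elesub_{\Sigma_1} V[g]$ by verifying, for each $\Sigma_1$ formula $\varphi$ and each tuple $\bar a$ of parameters from $\Hc$ (computed in $V$), the equivalence $\Hc \satisfies \varphi(\bar a) \iff V[g] \satisfies \varphi(\bar a)$. The forward implication needs nothing from the resurrection axiom: since $\Hc$ is a transitive set with $\Hc \of V[g]$, any witness to an existential statement in $\Hc$ is still available in $V[g]$ and the bounded matrix is absolute, so $\Sigma_1$ truth passes upward. Only the converse implication requires work.

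For the converse, I would apply $\wRA(\Gamma)$ to $\Q$: it supplies further forcing $\Rdot$ such that whenever $g*h \of \Q*\Rdot$ is $V$-generic, $\Hc \elesub \Hc^{V[g*h]}$ holds with full elementarity, and note that this conclusion already packages in the inclusion $\Hc \of \Hc^{V[g*h]}$, so in particular $\bar a \in \Hc^{V[g*h]}$. Now suppose $V[g] \satisfies \varphi(\bar a)$. Passing to the further extension $V[g*h]$ and using upward absoluteness of $\Sigma_1$ once more (now between the transitive classes $V[g] \of V[g*h]$) gives $V[g*h] \satisfies \varphi(\bar a)$. Since $\bar a \in \Hc^{V[g*h]}$, the \Levy\ absoluteness theorem, applied inside $V[g*h]$, yields $\Hc^{V[g*h]} \satisfies \varphi(\bar a)$; and finally the full elementarity $\Hc \elesub \Hc^{V[g*h]}$ reflects this back down to $\Hc \satisfies \varphi(\bar a)$. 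Combining the two directions gives the stated $\Sigma_1$-elementarity.

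I do not expect a genuine obstacle here — the argument is essentially a short sandwich between \Levy\ absoluteness and the resurrection elementarity — but two points want care. First, one should aim directly at $\Hc \elesub_{\Sigma_1} V[g]$ and resist routing the argument through $\Hc^{V[g]}$: if $\Gamma$ contains posets that collapse $\continuum$ below its value in $V$ (possible already when $\Gamma$ is the class of all forcing notions), the structure $\Hc^{V[g]}$ need not contain $\Hc$ at all, whereas $\Hc \of V[g]$ holds unconditionally by transitivity. Second, to be scrupulous about the step that passes to a $V[g]$-generic $h$ for $\Rdot_g$, one can instead carry the whole computation out with the forcing relation of $\Q*\Rdot$ over $V$: the statement $\Hc \satisfies \varphi(\bar a)$ concerns only fixed sets and is therefore forcing-invariant, so showing it is forced by the trivial condition of $\Q*\Rdot$ already establishes its truth in $V$. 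I would relegate this to a parenthetical remark. It is also worth stressing that only the weak form $\wRA(\Gamma)$ is used, since the argument never requires the resurrecting poset $\Rdot$ to belong to $\Gamma$.
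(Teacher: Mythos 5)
Your argument is correct and is essentially the paper's own proof: both sandwich $\Hc$ between the resurrection elementarity $\Hc\elesub\Hc^{V[g*h]}$ and the \Levy\ absoluteness $\Hc^{V[g*h]}\elesub_{\Sigma_1}V[g*h]$, you merely spell out explicitly the upward $\Sigma_1$-absoluteness step from $V[g]$ to $V[g*h]$ (and the forcing-relation pedantry about the generic $h$) that the paper leaves implicit. No gap; the extra cautions you flag are sound but not needed beyond what the paper already does.
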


\begin{proof}Suppose that $\Q\in\Gamma$ and $g\of\Q$ is $V$-generic. By $\wRA(\Gamma)$, there is $\R\in V[g]$ such that if $h\of \R$ is $V[g]$-generic, then $\Hc\elesub\Hc^{V[g*h]}$. By the \Levy\ absoluteness theorem, which amounts to a simple \Lowenheim-Skolem and reflection argument to collapse the existential witness to a set of hereditary size less than $\continuum$, we have $\Hc^{V[g*h]}\elesub_{\Sigma_1}V[g*h]$ and therefore  $\Hc\elesub\Hc^{V[g*h]}\elesub_{\Sigma_1} V[g*h]$, which implies $\Hc\elesub_{\Sigma_1}V[g*h]$, as desired.
\end{proof}

We shall try in this article to use standard notation. We denote the continuum $2^\omega$ by $\continuum$, and for any infinite cardinal $\delta$, we write $H_\delta$ for the set of all sets hereditarily of size less than $\delta$, that is, with transitive closure of size less than $\delta$. In particular, $H_\continuum$ is the collection of sets hereditarily of size less than the continuum. Relativizing this concept to a particular model of set theory $W$, we write $H_\continuum^W$ to mean the collection of sets in $W$ that are hereditarily of size less than $\continuum^W$ in $W$. Unadorned with such relativizing exponents, notation such as $\continuum$ and $\Hc$ will always refer to the interpretation of these terms in the default ground model $V$. We shall use the notation $f\from X \to Y$ for partial functions, to indicate that $\dom(f)\of X$ and $\ran(f)\of Y$.

\Velickovic\ and Hamkins had initially considered an extreme form of resurrection, the axiom asserting that for every partial order $\Q$, there is $\Rdot$ such that after forcing with $\Q*\Rdot$, there is an elementary embedding $j:V\to V[g*h]$. This axiom, however, is refuted by the generalization of the Kunen inconsistency showing that there is never any nontrivial elementary embedding $j:V\to V[G]$ in any forcing extension $V[G]$ (see \cite{HamkinsKirmayerPerlmutter2012:GeneralizationsOfKunenInconsistency}). Nevertheless, a restriction of the axiom remains interesting: if there is a rank-into-rank embedding $j:V_\lambda\to V_\lambda$, then after certain preparatory forcing $\Vbar=V[G]$, they observed, for any $\Q\in \Vbar_\lambda=V_\lambda[G]\satisfies\ZFC$ there is $\Rdot$, such that in the corresponding extension $\Vbar[g*h]$ there is an elementary embedding $j:\Vbar_\lambda\to \Vbar_\lambda[g*h]$; and one may assume without loss that $\cp(j)=\omega_1$. If one restricts to proper forcing or other classes, then one may insist on $\cp(j)=\omega_2$, and so on. By considering $j\restrict H_\kappa$, where $\kappa=\cp(j)$, one is led directly to the resurrection axioms, which subsequently can be treated, as we do in this article, with a much smaller large cardinal hypothesis.

\section{Resurrection axioms and bounded forcing axioms}\label{S.ConsequencesOfRA}

We regard the resurrection axioms as forcing axioms in light of their consequences amongst the bounded forcing axioms, as in theorem~\ref{T.wRAimpliesBFA_kappa}, and also because they express a precise logical connection between the universe and its forcing extensions. For cardinals $\kappa$ and collections $\Gamma$ of forcing notions, Goldstern and Shelah~\cite{GoldsternShelah1995Nr507:BPFA} introduced the \emph{bounded forcing axiom} $\BFA_\kappa(\Gamma)$, which is the assertion that whenever $\Q\in\Gamma$ and $\B=\text{r.o.}(\Q)$, if $\mathcal{A}$ is a collection of at most $\kappa$ many maximal antichains in $\B\setminus\{0\}$, each antichain of size at most $\kappa$, then there is a filter on $\B$ meeting each antichain in $\mathcal{A}$. With this terminology, $\BFA_\kappa(\ccc)$ is simply the same as Martin's Axiom $\MA(\kappa)$, and having $\BFA_\kappa(\ccc)$ for all $\kappa<\continuum$ amounts to the same as having \MA. The bounded proper forcing axiom $\BPFA$, as defined in~\cite{GoldsternShelah1995Nr507:BPFA}, is the same as $\BFA_{\omega_1}(\proper)$.\footnote{Analogously, the bounded semi-proper forcing axiom \BSPFA\ is the same as $\BFA_{\omega_1}(\semiproper)$, the bounded axiom-A forcing axiom \BAAFA\ is the same as $\BFA_{\omega_1}(\text{axiom-A})$, and the bounded Martin's maximum \BMM\ is the same as $\BFA_{\omega_1}(\Gamma)$ where $\Gamma$ is the class of forcing notions that preserve stationary subsets of $\omega_1$.\label{fn:BFA-Defn}}

\goodbreak
\begin{theorem}\label{T.wRAimpliesBFA_kappa}\
 If $\Gamma$ is any collection of posets, then $\wRA(\Gamma)$ implies $\BFA_\kappa(\Gamma)$ for any $\kappa<\continuum$. In particular,
 \begin{enumerate}
  \item $\wRA(\ccc)$ implies \MA.
 \item $\wRA(\proper)+\neg\CH$ implies \BPFA.
   \item $\wRA(\semiproper)+\neg\CH$ implies $\rm BSPFA$.
 \item $\wRA(\text{axiom-A})+\neg\CH$ implies $\rm BAAFA$.
 \item $\wRA(\text{preserving stationary subsets of }\omega_1)+\neg\CH$ implies $\rm BMM$.
 \end{enumerate}
\end{theorem}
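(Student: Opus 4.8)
The plan is to derive the entire statement from the single absoluteness consequence that $\wRA(\Gamma)$ already provides. By observation~\ref{O.wRAimpliesExistClosr}, $\wRA(\Gamma)$ gives $\Hc\elesub_{\Sigma_1}V[g]$ for every $\Q\in\Gamma$ and every $V$-generic $g\of\Q$. Since $\kappa<\continuum$ we have $\kappa^+\leq\continuum$, hence $H_{\kappa^+}\of\Hc$, and the \Levy\ absoluteness theorem applied inside $V$ gives $H_{\kappa^+}\elesub_{\Sigma_1}\Hc$; composing these two $\Sigma_1$-elementarities yields the \emph{generic $\Sigma_1$-absoluteness} $H_{\kappa^+}\elesub_{\Sigma_1}V[g]$ for all $\Q\in\Gamma$ and $V$-generic $g\of\Q$. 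It then remains to observe that this generic absoluteness implies $\BFA_\kappa(\Gamma)$, which is essentially one direction of Bagaria's characterization of bounded forcing axioms; I would either cite that theorem or reprove it as in the next paragraph. The enumerated consequences follow at once: Martin's axiom is exactly the conjunction of $\BFA_\kappa(\ccc)$ over all $\kappa<\continuum$ (under $\CH$ this conjunction is vacuous and already a theorem of $\ZFC$, which is why item~(1) needs no hypothesis), whereas \BPFA\ and the bounded forcing axioms of items (3)--(5) are $\BFA_{\omega_1}$ of the respective classes $\Gamma$ by footnote~\ref{fn:BFA-Defn}, and $\neg\CH$ supplies $\omega_1<\continuum$ so that the instance $\kappa=\omega_1$ of the theorem applies.

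To see that generic $\Sigma_1$-absoluteness of $H_{\kappa^+}$ over $\Gamma$ gives $\BFA_\kappa(\Gamma)$, fix $\Q\in\Gamma$, put $\B=\text{r.o.}(\Q)$, and let $\mathcal{A}=\<A_\alpha:\alpha<\kappa>$ be maximal antichains of $\B\setminus\{0\}$, each of size at most $\kappa$. I would pass to the Boolean subalgebra $\B_0$ of $\B$ generated by $\Union_{\alpha<\kappa}A_\alpha$. Since $\kappa$ is infinite, $\Card{\B_0}\leq\kappa$, so the parameter $p=\<\B_0,\mathcal{A}>$ and every subset of $\B_0$ lie in $H_{\kappa^+}$, and, crucially, finite meets in $\B_0$ are computed exactly as in $\B$. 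The assertion ``there is a filter $F$ on $\B_0$ with $F\intersect A_\alpha\neq\emptyset$ for every $\alpha<\kappa$'' is then $\Sigma_1$ with parameter $p$, a witnessing $F$ being a subset of $\B_0$ and the matrix having only quantifiers bounded by $\B_0$ and by $\kappa$. Forcing with $\Q$, the restriction to $\B_0$ of the generic ultrafilter on $\B$ is such a filter in $V[g]$, since that ultrafilter meets every ground-model maximal antichain of $\B$, in particular each $A_\alpha$; so $V[g]$ satisfies this $\Sigma_1$ assertion about $p$, and by $H_{\kappa^+}\elesub_{\Sigma_1}V[g]$ together with $p\in H_{\kappa^+}$ there is already such a filter $F$ in $V$. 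Finally the upward closure of $F$ inside $\B$ is a filter on $\B$: it is upward closed and omits $0$ trivially, and it is closed under meets because from $b_0,b_1$ lying above $a_0,a_1\in F$ respectively one obtains $a_0\wedge a_1\in F$ (the meet being the same in $\B_0$ as in $\B$) with $a_0\wedge a_1\leq b_0\wedge b_1$. This filter meets every $A_\alpha$, which is precisely $\BFA_\kappa(\Gamma)$.

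The step I expect to require the most care is exactly this reduction, and within it the bookkeeping that keeps the auxiliary objects small: one wants $\B_0$ to be an honest Boolean subalgebra of $\B$ (so that finitary meets, hence the filter condition, transfer correctly) while of size at most $\kappa$ (so that $\B_0$, the family $\mathcal{A}$, and a candidate filter all sit in $H_{\kappa^+}$), and one must check that a filter on $\B_0$ really does extend to a filter on $\B$ hitting the same antichains. Everything else is routine: the passage from $\Hc$ down to $H_{\kappa^+}$ via \Levy\ absoluteness, the verification that the filter-existence assertion is $\Sigma_1$, and the identifications of \MA, \BPFA, and the remaining bounded forcing axioms. Alternatively one could bypass the reduction by quoting Bagaria's theorem that $\BFA_\kappa(\Gamma)$ is equivalent to the generic $\Sigma_1$-absoluteness of $H_{\kappa^+}$ over $\Gamma$, in which case the proof collapses to an immediate application of observation~\ref{O.wRAimpliesExistClosr} and \Levy\ absoluteness.
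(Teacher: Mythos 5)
Your proposal is correct and is essentially the paper's own argument: both restrict to the subalgebra of $\text{r.o.}(\Q)$ generated by $\bigcup\mathcal{A}$, which has size at most $\kappa$, note that the trace of the generic filter on this subalgebra is an $\mathcal{A}$-generic filter in $V[g]$, pull it back to $V$ via the $\Sigma_1$-elementarity supplied by observation~\ref{O.wRAimpliesExistClosr}, and then generate from it an $\mathcal{A}$-generic filter on the full algebra; your preliminary descent from $\Hc$ to $H_{\kappa^+}$ by \Levy\ absoluteness is harmless but unnecessary, since the parameters already lie in $\Hc$. The one small repair: $|\B_0|\le\kappa$ does not by itself place $\B_0$ and $\mathcal{A}$ in $H_{\kappa^+}$, because elements of $\text{r.o.}(\Q)$ may have large hereditary size, so (as the paper notes) one should first replace $\B$ by an isomorphic copy, after which your argument goes through verbatim.
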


\begin{proof} Assume that $\wRA(\Gamma)$ holds and that $\kappa<\continuum$ is a cardinal.  To verify $\BFA_\kappa(\Gamma)$, fix any $\Q\in \Gamma$ and let $\B=\text{r.o.}(\Q)$ and $\mathcal{A}$ be any collection of $\kappa$ many maximal antichains in $\B\setminus\{0\}$, each antichain of size at most $\kappa$. Let $\B^\prime$ be the subalgebra of $\B$ generated by $\bigcup \mathcal{A}$, so that $\B^\prime\supseteq \bigcup\mathcal{A}$. Then $\B^\prime$ has size at most $\kappa$, and we may assume without loss of generality that both $\mathcal{A}$ and $\B^\prime$ are elements of $H_{\kappa^\plus}$, and thus of $\Hc$, by replacing $\B$ by an isomorphic copy if necessary. If $g\of\B$ is any $V$-generic filter, then it is also $\mathcal{A}$-generic, and so $g\intersect\B^\prime$ meets each antichain in $\mathcal{A}$. Moreover, $g\intersect\B^\prime$ is a filter on $\B^\prime$, since $\B^\prime$ is a subalgebra of $\B$. Thus, there exists in $V[g]$ an $\mathcal{A}$-generic filter on $\B^\prime$. Since $\Hc\elesub_{\Sigma_1}V[g]$ by observation~\ref{O.wRAimpliesExistClosr}, it follows by elementarity that such an $\mathcal{A}$-generic filter on $\B^\prime$ already exists in $V$. This filter generates in $V$ an $\mathcal{A}$-generic filter on $\B$, as desired. Statements (1)-(5) are immediate consequences. \end{proof}

Note that the failure of \CH\ is a necessary hypothesis in statement (2); the resurrection axiom $\RAall$ implies $\wRA(\proper)$, but by theorem~\ref{T.wRANotCollapseBelowC} it also implies \CH, which contradicts \BPFA. For essentially the same reasons, the failure of \CH\ is necessary in statements (3)-(5) also.

As we mentioned, Stavi in the 1980's (see~\cite[thm 25]{StaviVaananen2001:ReflectionPrinciples}) and independently Bagaria~\cite{Bagaria1997:ACharacterizationOfMA} characterized Martin's axiom \MA\ as being equivalent to the assertion that $H_\continuum \elesub_{\Sigma_1} V[g]$ whenever $g\of \Q$ is c.c.c.~forcing. Bagaria~\cite{Bagaria2000:BoundedForcingAxiomsAsGenericAbsoluteness}  generalized this to all bounded forcing axioms $\BFA_\kappa(\Gamma)$, and it follows from his characterization that $\BFA_\kappa(\Gamma)$ is equivalent to $H_{\kappa^\plus}\elesub_{\Sigma_1} V[g]$ whenever $\Q\in\Gamma$ and $g\of\Q$ is $V$-generic, assuming that $\kappa$ is a cardinal of uncountable cofinality and $\Gamma$ is a collection of forcing notions such that $\Q\in\Gamma$ implies $\Q\restrict q\in \Gamma$ for all $q\in\Gamma$. It follows, in particular, that \BPFA\
is equivalent to the assertion that $H_{\omega_2}\elesub_{\Sigma_1} V[g]$ whenever $g\of \Q$ is proper forcing.  Analogous characterizations hold for the axioms \BSPFA, \BAAFA, and \BMM. Moreover, it is easy to see that observation~\ref{O.wRAimpliesExistClosr} and Bagaria's characterization of $\BPFA_\kappa(\Gamma)$ allow for an alternative way of proving theorem~\ref{T.wRAimpliesBFA_kappa}.

\section{Resurrection axioms and the Size of the Continuum} \label{S.RAandContinuum}

Let us now consider the interaction of the resurrection axioms with the size of the continuum.

\begin{theorem}\label{T.wRANotCollapseBelowC}
Under the weak resurrection axiom $\wRA(\Gamma)$, if some forcing $\Q\in\Gamma$ can collapse a cardinal $\delta$, then $\continuum\leq\delta$. Consequently,
\begin{enumerate}
 \item $\RAall$ implies the continuum hypothesis $\CH$.
 \item The weak resurrection axioms for axiom-A forcing, proper forcing, semi-proper forcing, and forcing that preserves stationary subsets of $\omega_1$, respectively, each imply that $\continuum\leq\aleph_2$.
\end{enumerate}
In other words, $\wRA(\Gamma)$ implies that every forcing notion $\Q\in\Gamma$ necessarily preserves all cardinals below $\continuum$.
\end{theorem}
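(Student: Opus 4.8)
The plan is to derive the entire theorem from Observation~\ref{O.wRAimpliesExistClosr} alone, which already provides $\Hc\elesub_{\Sigma_1}V[g]$ for every $\Q\in\Gamma$ and every $V$-generic $g\of\Q$; the second forcing step $\Rdot$ of the resurrection axiom plays no role here. The one preliminary point to record is that for an ordinal $\delta$ the assertion ``$\delta$ is not a cardinal'' is $\Sigma_1$ in the parameter $\delta$: it says $\exists\gamma\,\exists f$ with $\gamma<\delta$ and $f$ a surjection of $\gamma$ onto $\delta$, and the matrix after these two quantifiers is $\Delta_0$ in $\gamma$, $f$, $\delta$.

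Now suppose $\Q\in\Gamma$ can collapse the cardinal $\delta$, say $q\forces_\Q$ ``$\check\delta$ is not a cardinal'' for some condition $q$, and suppose toward a contradiction that $\delta<\continuum$. Fix a $V$-generic filter $g\of\Q$ with $q\in g$, so that $\delta$ is not a cardinal in $V[g]$. Since $\delta$ is a cardinal of $V$ with $\delta<\continuum$, we have $\delta\in\Hc$, and so by Observation~\ref{O.wRAimpliesExistClosr} the $\Sigma_1$ fact ``$\delta$ is not a cardinal'' reflects from $V[g]$ down into $\Hc$. Thus there are $\gamma<\delta$ and a surjection $f\colon\gamma\to\delta$ lying in $\Hc$; but $\Hc\of V$, so $\delta$ fails to be a cardinal already in $V$, contradicting the choice of $\delta$. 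Hence $\continuum\le\delta$, which is the main assertion; its ``in other words'' reformulation --- that no $\Q\in\Gamma$ can collapse a cardinal below $\continuum$ --- is just the contrapositive.

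For the numbered consequences I would exhibit an explicit cardinal-collapsing poset in each class. For~(1): $\RAall=\wRA(\text{all})$, and the \Levy\ collapse $\Coll(\omega,\omega_1)$ lies in the class of all forcing and collapses $\aleph_1$, so the main assertion gives $\continuum\le\aleph_1$, which together with $\continuum\ge\aleph_1$ is $\CH$. For~(2): the collapse $\Coll(\omega_1,\omega_2)$ with countable conditions is $\sigma$-closed, hence axiom-A, hence proper, hence semi-proper, and so preserves stationary subsets of $\omega_1$; since it collapses $\aleph_2$, the main assertion yields $\continuum\le\aleph_2$ in each of these four classes.

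The argument is short, and the only spots needing a little care are the passage from ``$\Q$ can collapse $\delta$'' to an actual extension $V[g]$ in which $\delta$ is not a cardinal --- handled by working below the witnessing condition $q$ and invoking the forcing theorem --- and the check that ``$\delta$ is not a cardinal'' really is $\Sigma_1$, so that the $\Sigma_1$-elementarity of Observation~\ref{O.wRAimpliesExistClosr} applies. Once $\delta\in\Hc$, the reflected surjection automatically has hereditary size below $\continuum$ and so lies in $\Hc\of V$, and that is what closes the contradiction. One could alternatively apply $\wRA(\Gamma)$ directly to $\Q$, obtaining $V[g*h]$ with $\Hc\elesub\Hc^{V[g*h]}$ (whence $\continuum\le\continuum^{V[g*h]}$) and rerun the counting inside $\Hc^{V[g*h]}$, but routing through Observation~\ref{O.wRAimpliesExistClosr} is cleaner.
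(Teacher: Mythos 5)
Your proof is correct and follows essentially the same route as the paper: both derive the result from Observation~\ref{O.wRAimpliesExistClosr} by reflecting the $\Sigma_1$ assertion that $\delta$ is collapsed from $V[g]$ down into $\Hc\of V$, and both obtain the numbered consequences by collapsing $\aleph_1$ with arbitrary forcing and $\aleph_2$ with countably closed forcing. Your added details (the explicit $\Sigma_1$ form of ``$\delta$ is not a cardinal'' and working below a condition forcing the collapse) are fine and merely make explicit what the paper leaves implicit.
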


\begin{proof} Assume that $\wRA(\Gamma)$ holds and  $\delta$ is a cardinal below $\continuum$. Suppose for contradiction that $\Q\in \Gamma$ and $g\of\Q$ is $V$-generic such that $\delta$ is collapsed in $V[g]$. Then $\Hc\elesub_{\Sigma_1}V[g]$ by observation~\ref{O.wRAimpliesExistClosr}, and  in $V[g]$, there is a function witnessing that $\delta$ is not a cardinal, but such a function cannot exist in $\Hc$, a contradiction. Statement (1) follows by considering the canonical forcing to collapse $\aleph_1$ and (2) by collapsing $\aleph_2$ using countably closed forcing.
\end{proof}

Justin Moore pointed out that if there are sufficient large cardinals, then the converse of statement (1) is also true. The point is that if projective absoluteness holds, that is, if boldface projective truth is invariant by forcing---and this is a consequence of sufficient large cardinals, such as a proper class of Woodin cardinals---then the theory of $H_{\omega_1}$ with parameters is invariant by forcing, and so $H_{\omega_1}\elesub H_{\omega_1}^{V[g]}$ for any forcing extension. Thus, projective absoluteness implies that $\RAall$ is simply equivalent to \CH, and so we place our focus on the other resurrection axioms. Meanwhile, we do note that $\RAall$ is not equivalent to \CH\ in \ZFC, assuming $\Con(\ZFC)$, because it is equiconsistent with the existence of an uplifting cardinal by theorem~\ref{T.Maintheorem}; see also theorems~\ref{T.RA(Gamma)iffCH} and~\ref{T.RAimplyCUplift}.

A similar argument as in theorem~\ref{T.wRANotCollapseBelowC} shows that under the weak resurrection axiom $\wRA(\Gamma)$ every forcing notion $\Q\in\Gamma$ must necessarily preserve all stationary subsets of ordinals below $\continuum$.
For instance, if $S\of\omega_1$ is any stationary, co-stationary set and $\Q$ is the standard poset  that uses countable conditions to add a club subset $C\of S$, then $\Q$ is countably distributive, but it destroys the stationarity of the complement of $S$. It follows that the weak resurrection axiom $\wRA(\text{countably distributive})$ implies \CH. Moreover, Shelah's~\cite{AbrahamShelah1983:ForcingClosedUnboundedSets} modification of Baumgartner's original poset to add a club $C\of \omega_1$ using finite conditions by restricting it to a stationary set, provides an example of a cofinality-preserving forcing notion that can destroy the stationarity of a subset of $\omega_1$. It follows that the weak resurrection axiom $\wRA(\text{cofinality-preserving})$ implies \CH.

We shall show in section~\ref{S.ExactStrength}, relative to the existence of an uplifting cardinal, that several instances of the resurrection axioms, such as $\RA(\proper)$, $\RA(\text{axiom-A})$, and $\RA(\semiproper)$, are consistent with $\continuum=\aleph_2$, the maximal possible size for the continuum under these axioms by theorem~\ref{T.wRANotCollapseBelowC}. Relative to a supercompact uplifting cardinal, we show in section~\ref{S.RAandPFA} that $\RA(\text{preserving stationary subsets of }\omega_1)$ is consistent with $\continuum=\aleph_2$. Meanwhile, each of these axioms is relatively consistent with $\CH$:

\begin{theorem}\label{T.RA(prop)ConsisWithCH}
The resurrection axiom $\RA(\proper)$ is relatively consistent with \CH. The same is true of the axioms $\RA(\text{axiom-A})$, $\RA(\semiproper)$ and
 $\RA(\text{preserving stationary subsets of }\omega_1)$, and of $\RA(\Gamma)$ for any class $\Gamma$ necessarily closed under finite iterations and containing a poset forcing $\CH$ without adding reals.
\end{theorem}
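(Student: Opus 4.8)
The plan is not to re-run the consistency proof but to \emph{force} \CH\ over a model of the resurrection axiom itself, which shows that $\RA(\Gamma)+\CH$ is consistent whenever $\RA(\Gamma)$ is; for the specifically named classes the consistency of $\RA(\Gamma)$ (relative to an uplifting cardinal, or a supercompact uplifting cardinal in the stationary-set-preserving case) will be supplied by the constructions of sections~\ref{S.ExactStrength} and~\ref{S.RAandPFA}. So I would start with a model $W\satisfies\RA(\Gamma)$, which we may assume satisfies $\neg\CH$ since otherwise there is nothing to prove, and let $\mathbb{C}\in\Gamma^W$ be a poset forcing \CH\ without adding reals, as provided by the hypothesis on $\Gamma$. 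Forcing with a $W$-generic $g\of\mathbb{C}$ gives $W[g]\satisfies\CH$, and the work is to show $\RA(\Gamma)$ survives. Two elementary observations will drive the argument: first, a forcing extension that adds no new reals never changes $H_{\omega_1}$, since every hereditarily countable set is coded in an absolute way by a real (a well-founded extensional relation on $\omega$), so in particular $\Hc^{W[g]}=H_{\omega_1}^{W[g]}=H_{\omega_1}^{W}$; second, elementarity between two structures restricts to any uniformly definable substructure, and ``$x$ is hereditarily countable'' is such a definition of $H_{\omega_1}$ inside any $H_\lambda$ with $\lambda\geq\omega_2$.

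To verify $\RA(\Gamma)$ in $W[g]$, I would fix $\Q\in\Gamma^{W[g]}$, choose a $\mathbb{C}$-name $\dot\Q$ with $\dot\Q_g=\Q$, and trivialize it below the complement of a condition in $g$ forcing $\dot\Q\in\Gamma$, so that $\mathbb{C}*\dot\Q\in\Gamma^W$ by closure of $\Gamma$ under finite iterations (we may assume $\Gamma$ contains the trivial forcing, as do all classes under consideration). Applying $\RA(\Gamma)$ in $W$ to $\mathbb{C}*\dot\Q$ yields a $(\mathbb{C}*\dot\Q)$-name $\dot{\mathbb{S}}$ with $\forces^W_{\mathbb{C}*\dot\Q}\dot{\mathbb{S}}\in\Gamma$ such that $\Hc^W\elesub\Hc^{W[g*q*s]}$ whenever $g*q*s\of\mathbb{C}*\dot\Q*\dot{\mathbb{S}}$ is $W$-generic. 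I would then let $\dot{\mathbb{C}}'$ name a poset forcing \CH\ without adding reals as computed in $W[g][q][s]$, and set $\Rdot:=\dot{\mathbb{S}}*\dot{\mathbb{C}}'$, reinterpreted over $W[g]$ as a $\Q$-name. That $\forces^{W[g]}_\Q\Rdot\in\Gamma$ then follows from $\forces^W_{\mathbb{C}*\dot\Q}\dot{\mathbb{S}}\in\Gamma$, the fact that $\Gamma$ necessarily contains a poset forcing \CH\ without adding reals, and closure of $\Gamma$ under finite iterations, all of which transfer to the relevant extensions because they hold necessarily.

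For the resurrection clause, suppose $q*r=q*s*c'$ is $W[g]$-generic for $\Q*\Rdot$. Then $g*q*s$ is $W$-generic for $\mathbb{C}*\dot\Q*\dot{\mathbb{S}}$, so $\Hc^W\elesub\Hc^{W[g*q*s]}$ by the choice of $\dot{\mathbb{S}}$; restricting this elementarity to the definable substructure of hereditarily countable sets yields $H_{\omega_1}^W\elesub H_{\omega_1}^{W[g*q*s]}$. Since $\mathbb{C}'$ forces \CH\ over $W[g*q*s]$ without adding reals, we get $H_{\omega_1}^{W[g*q*s*c']}=H_{\omega_1}^{W[g*q*s]}$ and $\Hc^{W[g*q*s*c']}=H_{\omega_1}^{W[g*q*s*c']}$, and combining with $\Hc^{W[g]}=H_{\omega_1}^W$ from the first paragraph we obtain
$$\Hc^{W[g]}=H_{\omega_1}^W\elesub H_{\omega_1}^{W[g*q*s]}=\Hc^{W[g][q][r]},$$
which together with $W[g]\satisfies\CH$ gives $W[g]\satisfies\RA(\Gamma)+\CH$.

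The crux, and the reason the extra collapse $\dot{\mathbb{C}}'$ is unavoidable, is the passage to the definable substructure: the resurrection target $\Hc^{W[g*q*s]}$ supplied by $\RA(\Gamma)$ in $W$ need not satisfy \CH. In fact, when $W\satisfies\neg\CH$ one checks --- by transferring the fact that $\omega$, respectively $\omega_1$, is the largest cardinal of $H_{\omega_1}$, respectively $H_{\omega_2}$ --- that $\Hc^{W[g*q*s]}=H_{\omega_2}^{W[g*q*s]}$, and $H_{\omega_1}\not\elesub H_{\omega_2}$ in general, so one cannot simply invoke $\RA(\Gamma)$ in $W$ to get $\Hc^{W[g]}\elesub\Hc^{W[g*q*s]}$ directly; one must both cut the ambient $\Hc$ back down to $H_{\omega_1}$ with $\mathbb{C}'$ and observe that the elementarity $\Hc^W\elesub\Hc^{W[g*q*s]}$ already localizes to $H_{\omega_1}^W\elesub H_{\omega_1}^{W[g*q*s]}$. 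The routine parts I am glossing are the name manipulations for $\dot\Q$ and $\dot{\mathbb{S}}$ across the two-step iterations and the verification that the named classes $\Gamma$ (proper, axiom-$\mathrm{A}$, semi-proper, and stationary-set-preserving forcing) are closed under finite iterations and contain $\Coll(\omega_1,\continuum)$, which is $\sigma$-closed and so forces \CH\ without adding reals.
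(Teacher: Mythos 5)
Your proposal is correct and follows essentially the same route as the paper's proof: force \CH\ with a poset from $\Gamma$ adding no reals, and then, given $\Q$ in the extension, prepend that forcing to $\Q$, apply $\RA(\Gamma)$ in the ground model, append a further \CH-restoring poset that adds no reals, and restrict the resulting elementarity $\Hc\elesub\Hc^{V[g*q*s]}$ to the definable substructure $H_{\omega_1}$, using closure of $\Gamma$ under finite iterations to keep the composed forcing in $\Gamma$. The only cosmetic difference is that the paper illustrates with $\Add(\omega_1,1)$ in the proper case before noting the general argument, whereas you work abstractly with the hypothesized poset from the start.
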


\begin{proof} Let us illustrate in the case of proper forcing. Suppose that $\RA(\proper)$ holds, and $G\of\P$ is $V$-generic, where $\P=\Add(\omega_1,1)$ is the canonical forcing of the \CH. Consider any proper $\Q\in V[G]$. Since $\P*\Qdot$ is proper in $V$, there is further proper forcing $\Rdot$ such that if $G*g*h\of\P*\Qdot*\Rdot$ is $V$-generic, then $\Hc\elesub \Hc^{V[G*g*h]}$. Restricting this to the countable sets, it follows that $H_{\omega_1}\elesub H_{\omega_1}^{V[G*g*h]}$. Let $\R_2=\Add(\omega_1,1)^{V[G*g*h]}$ be further forcing to recover the \CH\ once again, and suppose $h_2\of\R_2$ is $V[G*g*h]$-generic. Since $\P$ adds no reals over $V$ and $\R_2$ adds no reals over $V[G*g*h]$, we have $H_{\omega_1}=H_{\omega_1}^{V[G]}$ and $H_{\omega_1}^{V[G*g*h]}=H_{\omega_1}^{V[G*g*h*h_2]}$. In other words, we have $H_{\omega_1}^{V[G]}\elesub H_{\omega_1}^{V[G][g*h*h_2]}$. Since $\omega_1=\continuum$ in both $V[G]$ and $V[G*g*h*h_2]$, this witnesses $\RA(\proper)$ in $V[G]$, as desired. An identical argument works with axiom-A forcing, semi-proper forcing, forcing that preserves stationary subsets of $\omega_1$, and with any class $\Gamma$ necessarily closed under finite iterations and containing a poset forcing \CH\ without adding reals.
\end{proof}

In the case of c.c.c.~forcing, we get a dramatic failure of $\CH$:

\begin{theorem}\label{T.RAcccImplyCWeakInacc}
 The resurrection axiom $\RA(\ccc)$ implies that the continuum $\continuum$ is a weakly inaccessible cardinal, even weakly hyper-inaccessible, a limit of such cardinals and so on. In particular, $\RA(\ccc)$ implies that $\CH$ fails spectacularly.
\end{theorem}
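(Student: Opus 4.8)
The plan is to derive the regularity of $\continuum$ from the forcing-axiom content of $\RA(\ccc)$, and its (hyper-, $\ldots$) limit-cardinal nature from a reflection argument powered by the resurrection elementarity. First I would note that $\RA(\ccc)$ implies $\wRA(\ccc)$, hence $\MA$ by Theorem~\ref{T.wRAimpliesBFA_kappa}, and invoke the classical fact that $\MA$ forces $\continuum$ to be regular: since $\MA$ yields $2^\kappa=\continuum$ for every infinite $\kappa<\continuum$, were $\cf(\continuum)<\continuum$ we would get $\continuum^{\cf(\continuum)}=2^{\cf(\continuum)}=\continuum$, contradicting K\"onig's theorem. So it remains to show $\continuum$ is a (hyper-, $\ldots$) limit cardinal.

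For the core engine, suppose toward a contradiction that $\continuum=\lambda^+$ is a successor cardinal. Then $\lambda\in\Hc$, and because every ordinal in the interval $(\lambda,\lambda^+)$ carries a bijection with $\lambda$ of hereditary size $\lambda<\continuum$, the structure $\Hc=H_{\lambda^+}$ satisfies ``$\lambda$ is the largest cardinal.'' Now force with $\Q=\Add(\omega,\continuum^+)$ to get $V[g]$ with $\continuum^{V[g]}>\continuum$, and apply $\RA(\ccc)$ to obtain c.c.c.\ forcing $\Rdot$ over $V[g]$ with $\Hc\elesub\Hc^{V[g*h]}$ whenever $g*h$ is $V$-generic. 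Since $\Q*\Rdot$ is c.c.c., it preserves all cardinals, so $\lambda^+=\continuum^V$ remains a cardinal in $V[g*h]$; and since $\continuum^V<\continuum^{V[g]}\le\continuum^{V[g*h]}$, we have $\lambda^+\in\Hc^{V[g*h]}$. Hence $\Hc^{V[g*h]}$ satisfies ``there is a cardinal above $\lambda$,'' contradicting $\Hc\elesub\Hc^{V[g*h]}$. So $\continuum$ is a limit cardinal, and being regular, weakly inaccessible.

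The same argument then bootstraps by induction on $\alpha$, using two preservation facts: (i) $H_\continuum$ correctly computes, for each of its ordinals, the properties ``cardinal,'' ``regular,'' ``limit cardinal,'' and hence ``weakly $\alpha$-inaccessible,'' since the relevant witnesses (bijections and cofinal maps) have hereditary size $<\continuum$; and (ii) c.c.c.\ forcing preserves ``weakly $\alpha$-inaccessible,'' by induction on $\alpha$ (regularity via cofinality preservation, and cofinally many lower-degree inaccessibles below via the inductive hypothesis). Granting these, if $\continuum$ were not weakly $\alpha$-inaccessible then, being regular, there would be $\beta<\alpha$ and $\mu<\continuum$ with no weakly $\beta$-inaccessible cardinal in $(\mu,\continuum)$, so $\Hc\models$``there is no weakly $\beta$-inaccessible cardinal above $\mu$.'' Repeating the blow-up-and-resurrect step, $\continuum^V$ is weakly $\beta$-inaccessible in $V$ by the inductive hypothesis, hence in $V[g*h]$ by (ii), and $\mu<\continuum^V<\continuum^{V[g*h]}$ places $\continuum^V$ into $\Hc^{V[g*h]}$ as a witness, contradicting elementarity. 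Thus $\continuum$ is weakly $\alpha$-inaccessible for every $\alpha$; in particular it is weakly hyper-inaccessible, a limit of weakly hyper-inaccessibles, and so on. The hard part is not any individual step but the bookkeeping behind (i) and (ii): one must check that a single c.c.c.\ step that merely enlarges the continuum leaves $\continuum^V$ — and every weakly $\beta$-inaccessible cardinal $\le\continuum^V$ — exactly fixed, so that $\continuum^V$ itself falls inside $\Hc^{V[g*h]}$ as a counterexample to precisely what $\Hc$ had asserted.
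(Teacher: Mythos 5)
Your proof is correct and follows essentially the same route as the paper's: regularity of $\continuum$ from \MA\ via theorem~\ref{T.wRAimpliesBFA_kappa}, then the blow-up-and-resurrect step with $\Q=\Add(\omega,\continuum^\plus)$ and a c.c.c.~tail, so that $\continuum^V$ remains a cardinal (indeed weakly $\beta$-inaccessible, since c.c.c.~forcing preserves cardinals and cofinalities) below $\continuum^{V[g*h]}$ and hence lies in $\Hc^{V[g*h]}$ as a witness contradicting, by elementarity, whatever bound $\Hc$ asserted. The only nit is the closing claim ``weakly $\alpha$-inaccessible for every $\alpha$,'' which should be read as ``for every $\alpha<\continuum$'' (as in the paper), since your reflection step needs the parameters $\beta$ and $\mu$ to be elements of $\Hc$.
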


\begin{proof}
Assume $\RA(\ccc)$. By theorem~\ref{T.wRAimpliesBFA_kappa}, it follows that \MA\ holds and so $\continuum$ is regular. Let $\Q=\Add(\omega,\continuum^\plus)$ be the forcing to add $\continuum^\plus$ many Cohen reals. By $\RA(\ccc)$, there is further c.c.c.~forcing $\Rdot$ such that if $g*h\of\Q*\Rdot$ is $V$-generic, then $\Hc\elesub \Hc^{V[g*h]}$. Since $V[g*h]$ is a c.c.c.~extension, cardinals are preserved and $\continuum^V$ is a cardinal less than $\continuum^{V[g*h]}$, and therefore an element of $\Hc^{V[g*h]}$. The continuum $\continuum$ cannot be a successor cardinal in $V$, since otherwise $\continuum=\delta^\plus$ for some $\delta<\continuum$ and $\Hc$ would see that $\delta$ is the largest cardinal, but $\Hc^{V[g*h]}$ would not agree. Thus, $\continuum$ is a regular limit cardinal, and hence weakly inaccessible. It must be a limit of such cardinals, that is, weakly $1$-inaccessible, because if the weakly inaccessible cardinals below $\continuum$ were bounded by some $\gamma<\continuum$, then by elementarity, this would also be true in $H_\continuum^{V[g*h]}$, contradicting the fact that $\continuum^V$ remains weakly inaccessible in the c.c.c.~extension $V[g*h]$. Essentially the same argument shows that $\continuum$ is weakly $\alpha$-inaccessible for every $\alpha<\continuum$---so it is weakly hyper-inaccessible---and it is a limit of such cardinals, and so on.
\end{proof}

Although $\RA(\ccc)$ remains compatible with much stronger properties for the continuum $\continuum$, we cannot expect to strengthen the conclusion of theorem \ref{T.RAcccImplyCWeakInacc} to assert, for example, that $\continuum$ is weakly Mahlo, while still assuming only $\ZFC+\RA(\ccc)$ in the hypothesis, since this would imply that it is a Mahlo cardinal in $L$, but this already exceeds the consistency strength of $\RA(\ccc)$ by theorem \ref{T.Maintheorem}, which shows it to be equiconsistent with an uplifting cardinal and therefore strictly weaker than the existence of a Mahlo cardinal.

We pointed out after theorem~\ref{T.wRANotCollapseBelowC} that under projective absoluteness, then also $\RAall$ and \CH\ are equivalent. The next theorem provides instances of resurrection that are outright  equivalent to \CH.

\begin{theorem}\label{T.RA(Gamma)iffCH}
The following are equivalent:
\begin{enumerate}
\item the continuum hypothesis \CH\
 \item $\RA(\text{countably closed})$
 \item $\RA(\text{countably distributive})$
 \item $\RA(\text{does not add reals})$
\item $\wRA(\text{does not add reals})$
\item $\wRA(\text{countably distributive})$
 \end{enumerate}
\end{theorem}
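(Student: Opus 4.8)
The plan is to run all the equivalences off of \CH\ as a hub. First I record the cheap reductions. By the monotonicity remarks following Main Definition~\ref{D.ResurrectionAxioms} --- namely $\RA(\Gamma)\Rightarrow\wRA(\Gamma)$, and $\wRA(\Gamma_2)\Rightarrow\wRA(\Gamma_1)$ whenever $\Gamma_1\of\Gamma_2$ --- together with the fact that the three classes at play are nested (every countably closed poset is countably distributive, and every countably distributive poset adds no reals), the arrows $(3)\Rightarrow(6)$, $(4)\Rightarrow(5)$, and $(5)\Rightarrow(6)$ are immediate. Consequently it suffices to prove $(1)\Rightarrow(2)$, $(1)\Rightarrow(3)$, $(1)\Rightarrow(4)$; the implication $(6)\Rightarrow(1)$, which then feeds $(3)\Rightarrow(1)$, $(4)\Rightarrow(1)$, $(5)\Rightarrow(1)$ through the free arrows; and, separately, $(2)\Rightarrow(1)$, since $\RA$ is not monotone and $(2)$ reaches the rest of the list only through $(1)$. (The arrows $(1)\Rightarrow(5)$ and $(1)\Rightarrow(6)$ then go via $(4)$ and $(3)$.)

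For $(1)\Rightarrow(2),(3),(4)$ I would prove the single uniform statement: if \CH\ holds and $\Gamma$ is any class of forcing notions each of which adds no reals and with the trivial forcing in $\Gamma$, then $\RA(\Gamma)$ holds, witnessed by the trivial second step. The point is that under \CH\ one has $\Hc=H_{\omega_1}$, and any $\Q$ adding no reals adds no new hereditarily countable sets (each is coded by a real) and collapses no cardinal to $\omega$ (a surjection $\omega\to\omega_1^V$ would code a new real), so $\omega_1$ is preserved and \CH\ persists; hence $\Hc^{V[g]}=H_{\omega_1}^{V[g]}=H_{\omega_1}^V=\Hc$, and taking $\Rdot$ trivial gives $\Hc\elesub\Hc^{V[g*h]}$ trivially, with $\forces_\Q\Rdot\in\Gamma$ since the trivial forcing lies in each of these classes. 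The classes ``countably closed,'' ``countably distributive,'' and ``does not add reals'' all qualify.

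The implication $(6)\Rightarrow(1)$ is essentially the implication $\wRA(\text{countably distributive})\Rightarrow\CH$ already discussed after theorem~\ref{T.wRANotCollapseBelowC}: if \CH\ fails then $\omega_1<\continuum$, so $\omega_1$ and any stationary, co-stationary $S\of\omega_1$ lie in $\Hc$; the standard poset $\Q_S$ that shoots a club $C\of S$ with countable conditions is countably distributive, so Observation~\ref{O.wRAimpliesExistClosr} gives $\Hc\elesub_{\Sigma_1}V[g]$ for $g\of\Q_S$ generic; and since $\omega_1^{V[g]}=\omega_1$, the assertion ``there is a closed unbounded $C\of\omega_1$ with $C\intersect(\omega_1\minus S)=\emptyset$'' is $\Sigma_1$ in the parameters $\omega_1,S\in\Hc$ (its matrix is bounded over $\omega_1$), so it reflects down to $\Hc$ and hence holds in $V$, contradicting the co-stationarity of $S$. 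Passing through the free arrows then yields $(3)\Rightarrow(1)$, $(4)\Rightarrow(1)$, and $(5)\Rightarrow(1)$ as well.

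The remaining implication $(2)\Rightarrow(1)$ is the one I expect to be the crux, since it is not covered by the club-shooting argument and it must genuinely use the \emph{full} axiom (that the second-step forcing is again countably closed). Assume $\RA(\text{countably closed})$ and, for contradiction, $\omega_1^V<\continuum^V$. Let $\Q=\Add(\omega_1,1)$ be the canonical countably closed forcing of the \CH\ and $g\of\Q$ generic, so $V[g]\satisfies\CH$ with no new reals and with $\omega_1$ preserved; hence $\continuum^V$ is collapsed to have size $\omega_1$ in $V[g]$. Apply $\RA(\text{countably closed})$ to get further forcing $\Rdot$ that is forced to be countably closed, with $\Hc\elesub\Hc^{V[g*h]}$ for generic $h$; as $\Rdot$ is countably closed over $V[g]$ it adds no reals and preserves $\omega_1$, so \CH\ persists in $V[g*h]$ and $\continuum^{V[g*h]}=\omega_1^{V[g*h]}=\omega_1^V$. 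But then $\omega_1^V\in\Hc=H_{\continuum^V}^V$ (since $|\omega_1^V|<\continuum^V$), while $\omega_1^V=\omega_1^{V[g*h]}$ is uncountable in $V[g*h]$, so $\omega_1^V\notin H_{\omega_1^V}^{V[g*h]}=\Hc^{V[g*h]}$, contradicting $\Hc\of\Hc^{V[g*h]}$. Hence \CH. The one nontrivial ingredient here to pin down is that $\Add(\omega_1,1)$ forces \CH\ (equivalently, collapses $(2^{\aleph_0})^V$ to $\aleph_1$): its generic $F\colon\omega_1\to 2$, read off in consecutive $\omega$-blocks $F\restrict[\lambda,\lambda+\omega)$, enumerates all ground-model reals by a density argument, and the forcing adds no new reals.
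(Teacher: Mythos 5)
Your proposal is correct and follows essentially the same route as the paper: the forward directions use that no-new-reals forcing leaves $\Hc=H_{\omega_1}$ unchanged under \CH\ (so trivial resurrection works), the implication $(2)\Rightarrow(1)$ forces \CH\ with the canonical countably closed poset and reflects it back via the resurrection elementarity, and $(6)\Rightarrow(1)$ is the club-shooting/$\Sigma_1$-absoluteness argument from the remarks after theorem~\ref{T.wRANotCollapseBelowC}. The only cosmetic differences are that you route the backward directions of (3) and (4) through $(6)\Rightarrow(1)$ instead of repeating the force-\CH-and-reflect argument, and you phrase $(2)\Rightarrow(1)$ as a contradiction using $\omega_1^V$ as a witness to the failure of $\Hc\of\Hc^{V[g*h]}$ rather than transferring \CH\ directly by elementarity.
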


\begin{proof} We first illustrate the equivalence of statements (1) and (2). For the forward direction, suppose that $\CH$ holds, and that $g\of\Q$ is countably closed forcing. Since $\Q$ does not add any new reals, it follows that $\CH$ holds in $V[g]$ and that $H_{\omega_1}=H_{\omega_1}^{V[g]}$. Consequently $\Hc=\Hc^{V[g]}$, and trivial forcing $h\of\R$ over $V[g]$ yields $\Hc\elesub\Hc^{V[g*h]}$, as desired. For the backward direction, assume that $\RA(\text{countably closed})$ holds and $g\of \Q$ is the canonical poset to force $\CH$, using countable conditions. The poset $\Q$ is countably closed and forces $\CH$ in $V[g]$. By $\RA(\text{countably closed})$ there is further countably closed forcing $h\of \R\in V[g]$ such that $\Hc\elesub\Hc^{V[g*h]}$. Since $\R$ does not add any reals, it follows that $\CH$ holds in $V[g*h]$, and consequently by elementarity $\CH$ also holds in $V$, as desired.

Essentially the same argument establishes the equivalence of $\CH$ with (3), and also with (4). Lastly, note that (4) implies (5), which in turn implies (6), and we saw earlier in the remarks after theorem~\ref{T.wRANotCollapseBelowC} that statement (6) implies (1).
\end{proof}

Suppose that $\delta\geq\aleph_1$ is a regular cardinal, and $\Gamma$ is a class of forcing notions necessarily containing a poset which forces $\continuum\leq\delta$ such that posets in $\Gamma$ do not add bounded subsets of $\delta$. Then similar arguments as used in theorem~\ref{T.RA(Gamma)iffCH} show that $\continuum\leq\delta$ is equivalent to $\RA(\Gamma)$, and they also show that each of the resurrection axioms $\RA(\ltdelta\text{-closed})$ and $\RA(\ltdelta\text{-distributive})$ is equivalent to $\continuum\leq\delta$. 
 
We conclude this section by pointing out that some natural-seeming resurrection principles are simply inconsistent.

\begin{theorem}\label{T.InconsistentRA}\
 \begin{enumerate}
 \item $\RA(\delta\text{-c.c.})$ is inconsistent, for any cardinal $\delta\geq\aleph_2$.
 \item $\RA(\text{cardinal-preserving})$ is inconsistent.
 \item $\RA(\text{cofinality-preserving})$ is inconsistent.
 \item $\RA(\aleph_1\text{-preserving}\intersect \aleph_2\text{-preserving})$ is inconsistent.
\end{enumerate}

\end{theorem}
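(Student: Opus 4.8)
The plan is to refute each of the four axioms by deriving from it both the continuum hypothesis and its negation. The first half uses the preservation results of section~\ref{S.RAandContinuum}: every class $\Gamma$ on the list contains a poset that does a small but irreparable bit of damage, and theorem~\ref{T.wRANotCollapseBelowC} then forces the continuum down to $\aleph_1$. For statement (1), where $\Gamma$ is the class of $\delta$-c.c.~posets with $\delta\geq\aleph_2$, the poset $\Coll(\omega,\omega_1)$ has size $\aleph_1<\delta$, hence is $\delta$-c.c.~and belongs to $\Gamma$, and it collapses $\aleph_1$; since $\RA(\Gamma)$ implies $\wRA(\Gamma)$, theorem~\ref{T.wRANotCollapseBelowC} then gives $\continuum\leq\aleph_1$. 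For statements (2)--(4) I would instead use Shelah's finite-condition modification of Baumgartner's poset shooting a club through a fixed stationary, co-stationary set $S\of\omega_1$: it is cofinality-preserving and hence lies in each of the classes cardinal-preserving, cofinality-preserving, and $\aleph_1$-preserving $\intersect$ $\aleph_2$-preserving, yet it destroys the stationarity of $\omega_1\minus S$, so the argument from the remarks following theorem~\ref{T.wRANotCollapseBelowC} yields \CH\ in each of these three cases.

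For the second half, assuming \CH, I would exploit that every one of the four classes contains all c.c.c.~forcing. Let $\Q=\Add(\omega,\kappa)$ add $\kappa$ Cohen reals; then $\Q\in\Gamma$, it preserves all cardinals, and it forces $\continuum^{V[g]}=\kappa$. By $\RA(\Gamma)$ there is $\Rdot\in\Gamma^{V[g]}$ with $\Hc\elesub\Hc^{V[g*h]}$ whenever $g*h\of\Q*\Rdot$ is $V$-generic. Under \CH\ we have $\Hc=H_{\omega_1}$, and $H_{\omega_1}$ satisfies ``every set is countable''; by elementarity $\Hc^{V[g*h]}$ satisfies this as well, which forces $\continuum^{V[g*h]}=\omega_1^{V[g*h]}$, i.e.~\CH\ holds in $V[g*h]$ (for otherwise $\omega_1^{V[g*h]}$ would be an uncountable member of $\Hc^{V[g*h]}$). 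The contradiction then comes from the preservation property defining $\Gamma$: applied to $\Rdot$ over $V[g]$ and combined with the fact that $\Q$ preserved all cardinals, it keeps the continuum of $V[g*h]$ at least as large as a cardinal of $V[g*h]$ strictly above $\omega_1^{V[g*h]}$. For $\Gamma$ cardinal-preserving (and likewise cofinality-preserving, since that too preserves cardinals) it suffices to take $\kappa=\aleph_2$: then $\Rdot$ preserves every cardinal of $V[g]$, so $\omega_1^{V[g*h]}=\omega_1^V$ and $\omega_2^{V[g*h]}=\omega_2^V$, while $\continuum^{V[g*h]}\geq\continuum^{V[g]}=\omega_2^V$. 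For $\Gamma$ the class $\aleph_1$-preserving $\intersect$ $\aleph_2$-preserving, $\kappa=\aleph_2$ again works, using that $\Rdot$ preserves $\aleph_2$. For $\Gamma$ the $\delta$-c.c.~class I would take $\kappa=\delta^{++}$: then $\Rdot$ is $\delta$-c.c., hence $\delta^+$-c.c., in $V[g]$, so it preserves the regular cardinal $\delta^+$; thus $\delta^+$ is a cardinal of $V[g*h]$, yet $\omega_1^{V[g*h]}=\continuum^{V[g*h]}\geq\continuum^{V[g]}=\delta^{++}>\delta^+$ forces $\delta^+$ to be countable in $V[g*h]$. In every case \CH\ in $V[g*h]$ is contradicted, and $\RA(\Gamma)$ is refuted.

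The delicate point is the chain-condition bookkeeping in statement (1): one must push the continuum strictly past $\delta$ so that the $\delta$-c.c.~constraint on $\Rdot$ prevents it from recovering \CH, and here it is important that $\delta^+$---the least regular cardinal $\geq\delta$---is preserved by $\delta$-c.c.~forcing whether $\delta$ is regular or singular, which is what makes the argument uniform over all cardinals $\delta\geq\aleph_2$. For statements (2)--(4) the only nonroutine ingredient is the appeal to the cofinality-preserving club-shooting poset in the first half; everything else is routine tracking of cardinals through the two-step extension $\Q*\Rdot$.
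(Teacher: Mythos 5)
Your proof is correct and takes essentially the same route as the paper's: first derive \CH\ from each axiom via theorem~\ref{T.wRANotCollapseBelowC} and the remarks following it (the collapse of $\aleph_1$ for (1), Shelah's cofinality-preserving club-shooting poset for (2)--(4)), then blow up the continuum with Cohen forcing and use the chain-condition/preservation constraints on $\Rdot$ to contradict the \CH\ that elementarity $\Hc\elesub\Hc^{V[g*h]}$ transfers to $V[g*h]$. The only cosmetic differences are that the paper adds $\delta^+$ rather than $\delta^{++}$ Cohen reals in (1) (arguing that $\delta$ and $\delta^+$ survive as two distinct uncountable cardinals below the continuum of $V[g*h]$), and in (4) you should invoke that $\Rdot$ preserves both $\aleph_1$ and $\aleph_2$ (which the class guarantees), since $\aleph_2$-preservation alone would not exclude $\omega_1^{V[g*h]}=\aleph_2^V$ with \CH\ holding in $V[g*h]$.
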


\begin{proof}
For (1), fix any cardinal $\delta\geq\aleph_2$ and assume $\RA(\delta\text{-c.c})$. Since the usual forcing to collapse $\aleph_1$ is $\aleph_2\text{-c.c.}$, and therefore $\delta\text{-c.c.}$, it follows by theorem~\ref{T.wRANotCollapseBelowC} that $\CH$ holds in $V$.  If we force to add $g\of \Q=\text{Add}(\omega,\delta^\plus)$, then all cardinals are preserved and $\continuum=\delta^\plus$ in $V[g]$. By $\RA(\delta\text{-c.c.})$ there is further $\delta\text{-c.c.}$ forcing $h\of \R$ in $V[g]$ such that $\Hc\elesub \Hc^{V[g*h]}$. Since $\R$ preserves the cardinals $\delta$ and $\delta^\plus$ as two distinct uncountable cardinals, it follows that $\continuum\geq\delta^\plus$ and consequently that $\CH$ fails in $V[g*h]$, a contradiction to the elementarity $\Hc\elesub \Hc^{V[g*h]}$.

For (2), assume $\RA(\text{cardinal-preserving})$. The weak resurrection axiom $\wRA(\text{cofinality-preserving})$ holds, and so $\CH$ holds by our remarks after theorem~\ref{T.wRANotCollapseBelowC}. Moreover, if we force with $g\of \Q=\text{Add}(\omega,\aleph_2)$, then $\continuum=\aleph_2$ in $V[g]$ and the same argument as in theorem~\ref{T.RAcccImplyCWeakInacc}, but now for any cardinal-preserving forcing $h\of \R\in V[g]$ rather than c.c.c.~forcing, shows that $\continuum\geq \aleph_2$ in $V[g*h]$ and thus $\Hc\not \elesub \Hc^{V[g*h]}$, a contradiction.

Statement (3) is proved by the same argument as for (2), but now for cofinality-preserving forcing notions, and the argument for (4) is similar also, since again $\CH$ holds in $V$, and if $g\of \Q=\text{Add}(\omega,\aleph_2)$ is $V$-generic, then it suffices to know that $\R\in V[g]$ preserves the cardinals $\aleph_1$ and $\aleph_2$ to conclude that $\continuum \geq \aleph_2$ in $V[g*h]$ and therefore that $\Hc\not \elesub \Hc^{V[g*h]}$, a contradiction.
 \end{proof}

\section{The uplifting cardinals}\label{S.UpliftingCardinals}

In this section, we introduce the uplifting cardinals. We view the uplifting cardinals as relatively low in the large cardinal hierarchy, in light of the bounds provided by theorem~\ref{T.ConBoundUplift}. Uplifting cardinals relativize to $L$, and they have what we call a \HOD-anticipating uplifting Laver function, as in statement~(2) of theorem~\ref{T.VariousUpliftingLaverFcns}. In section~\ref{S.ExactStrength}, we shall show that many instances of resurrection axioms are equiconsistent with the existence of an uplifting cardinal.

\begin{definition}\label{D.ThetaUplift}\rm
 An inaccessible cardinal $\kappa$ is \emph{uplifting} if for every ordinal $\theta$ it is $\theta$-uplifting, meaning that there is an inaccessible $\gamma\geq\theta$ such that $V_\kappa\elesub V_\gamma$ is a proper elementary extension. An inaccessible cardinal is \emph{pseudo uplifting} if for every ordinal $\theta$ it is pseudo $\theta$-uplifting, meaning that there is a cardinal $\gamma\geq\theta$ such that $V_\kappa\elesub V_\gamma$ is a proper elementary extension, without insisting that $\gamma$ is inaccessible.
\end{definition}

It is an elementary exercise to see that if $V_\kappa\elesub V_\gamma$ is a proper elementary extension, then $\kappa$ and hence also $\gamma$ are $\beth$-fixed points, and so $V_\kappa=H_\kappa$ and $V_\gamma=H_\gamma$. It follows that a cardinal $\kappa$ is uplifting if and only if it is regular and there are arbitrarily large regular cardinals $\gamma$ such that $H_\kappa\elesub H_\gamma$. It is also easy to see that every uplifting cardinal $\kappa$ is uplifting in $L$, with the same targets. Namely, if $V_\kappa\elesub V_\gamma$, then we may simply restrict to the constructible sets to obtain $V_\kappa^L=L^{V_\kappa}\elesub L^{V_\gamma}=V_\gamma^L$. An analogous result holds for pseudo uplifting cardinals.

The {\df \Levy\ scheme} is the theory ``$V_\delta\!\elesub \!V \,+ \delta$ is inaccessible,'' which is formalized in the language of set theory augmented with a constant symbol for $\delta$, consisting of the axioms \hbox{$\forall x\in V_\delta\, [\varphi(x)\iff\varphi^{V_\delta}(x)]$,} plus the assertion that $\delta$ is inaccessible. The \Levy\ scheme has figured in various other consistency results, such as the boldface maximality principle $\MP(\R)$, as in \cite{Hamkins2003:MaximalityPrinciple} or \cite{StaviVaananen2001:ReflectionPrinciples}. The \Levy\ scheme implies the theory ``$\Ord$ is Mahlo'', the scheme asserting of every definable closed unbounded class of ordinals that it contains a regular cardinal, and a simple compactness argument shows that these two theories are equiconsistent. The consistency strength of the existence of an uplifting cardinal is bounded above and below by:

\begin{theorem}\label{T.ConBoundUplift}\
\begin{enumerate}
 \item If $\delta$ is a Mahlo cardinal, then $V_\delta$ has a proper class of uplifting cardinals.
 \item Every uplifting cardinal is pseudo uplifting and a limit of pseudo uplifting cardinals.
 \item If there is a pseudo uplifting cardinal, or indeed, merely a pseudo $0$-uplifting cardinal, then there is a transitive set model of $\ZFC+{}$the \Levy\ scheme, and consequently a transitive model of $\ZFC+\Ord$ is Mahlo.
\end{enumerate}
\end{theorem}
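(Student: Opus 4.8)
The plan is to handle the three statements in turn; all three come down to a couple of applications of the standard elementary-submodel fact that if $A\elesub C$, $B\elesub C$ and $A\of B$, then $A\elesub B$.

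For (1), since a Mahlo cardinal is inaccessible, we have $V_\delta\models\ZFC$, and a routine \Lowenheim-Skolem and reflection argument shows that $C=\{\gamma<\delta : V_\gamma\elesub V_\delta\}$ is closed unbounded in $\delta$. As $\delta$ is Mahlo, the inaccessible cardinals below $\delta$ are stationary, so $S=\{\kappa<\delta : \kappa\text{ inaccessible and }V_\kappa\elesub V_\delta\}$ is stationary, hence unbounded, in $\delta$. Now if $\kappa<\gamma$ both lie in $S$, then $V_\kappa\elesub V_\gamma$ by the submodel fact applied inside $C$, and this is a proper extension with inaccessible target; so each $\kappa\in S$ is uplifting as computed in $V_\delta$ (given $\theta<\delta$, pick $\gamma\in S$ above $\max(\theta,\kappa)$), and since $S$ is unbounded in $\delta$, this yields the desired proper class of uplifting cardinals of $V_\delta$.

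For (2), that an uplifting cardinal is pseudo uplifting is immediate, since inaccessible targets are cardinals. For the claim that an uplifting $\kappa$ is a limit of pseudo uplifting cardinals, I would fix $\alpha<\kappa$ and produce a pseudo uplifting cardinal in the interval $(\alpha,\kappa)$. First choose an inaccessible $\gamma$ with $V_\kappa\elesub V_\gamma$; since $\{\bar\gamma<\gamma : V_{\bar\gamma}\elesub V_\gamma\}$ is club in $\gamma$ and its members are cardinals, the submodel fact gives $V_\kappa\elesub V_{\bar\gamma}$ for cofinally many cardinals $\bar\gamma<\gamma$, so $V_\gamma\models$``$\kappa$ is pseudo uplifting''. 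As $\alpha\in V_\kappa\elesub V_\gamma$, this reflects down to $V_\kappa\models$``there is a pseudo uplifting cardinal above $\alpha$'', yielding a witness $\mu$ with $\alpha<\mu<\kappa$ and $V_\kappa\models$``$\mu$ is pseudo uplifting''; since $\kappa$ is inaccessible, $\mu$ is genuinely inaccessible. The step that requires care is to upgrade this to the assertion that $\mu$ is pseudo uplifting \emph{in $V$}: given an arbitrary ordinal $\theta$, upliftingness of $\kappa$ supplies an inaccessible $\gamma'>\theta$ with $V_\kappa\elesub V_{\gamma'}$, whence $V_{\gamma'}\models$``$\mu$ is pseudo uplifting'' by elementarity, and unpacking this produces a cardinal $\bar\gamma$ with $\theta\le\bar\gamma<\gamma'$ and $V_\mu\elesub V_{\bar\gamma}$. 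Thus $\mu$'s targets are cofinal in the ordinals and $\mu$ is pseudo uplifting, as needed.

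For (3), let $\kappa$ be pseudo $0$-uplifting, so $\kappa$ is inaccessible and there is a cardinal $\gamma>\kappa$ with $V_\kappa\elesub V_\gamma$ a proper extension; since $V_\kappa\models\ZFC$, also $V_\gamma\models\ZFC$. I claim that $V_\gamma$, with the constant symbol $\delta$ of the \Levy\ scheme interpreted as $\kappa$, is the sought transitive set model: $\kappa$ is inaccessible in $V_\gamma$ (it really is, and $\kappa<\gamma$), and for each formula $\varphi$ the scheme axiom $\forall x\in V_\delta\,[\varphi(x)\iff\varphi^{V_\delta}(x)]$ holds in $V_\gamma$, because $V_\gamma$ computes $V_\kappa$ correctly, so $\varphi^{V_\kappa}(x)$ holds in $V_\gamma$ exactly when $V_\kappa\models\varphi(x)$, which by $V_\kappa\elesub V_\gamma$ holds exactly when $V_\gamma\models\varphi(x)$. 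Since the \Levy\ scheme proves every instance of the scheme ``$\Ord$ is Mahlo'' (as noted just before the theorem), the same $V_\gamma$ is a transitive model of $\ZFC+{}$``$\Ord$ is Mahlo''.

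The only genuinely delicate point in all of this is the upgrade in (2): ``pseudo uplifting in $V_\kappa$'' only certifies targets below $\kappa$, and the remedy is to transport the first-order assertion ``$\mu$ is pseudo uplifting'' \emph{upward} along the cofinally many elementary maps $V_\kappa\elesub V_{\gamma'}$ furnished by upliftingness of $\kappa$, which pushes $\mu$'s targets cofinally through $\Ord$. Everything else is the single submodel lemma together with routine absoluteness bookkeeping.
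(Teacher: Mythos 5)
Your proof is correct and follows essentially the same route as the paper: the club-plus-Mahlo argument for (1), reflecting pseudo-upliftingness from $V_\gamma$ down below $\kappa$ for (2), and taking $V_\gamma$ itself (with $\delta$ interpreted as $\kappa$) as the transitive model of the \Levy\ scheme for (3). The only difference is that you make explicit the upgrade in (2) from ``pseudo uplifting in $V_\kappa$'' to genuinely pseudo uplifting, by transporting the assertion along the cofinally many extensions $V_\kappa\elesub V_{\gamma'}$, a step the paper leaves implicit.
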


\begin{proof} For (1), suppose that $\delta$ is a Mahlo cardinal. By the \Lowenheim-Skolem theorem, there is a club set $C\of\delta$ of cardinals $\beta$ with $V_\beta\elesub V_\delta$. Since $\delta$ is Mahlo, the club $C$ contains unboundedly many inaccessible cardinals. If $\kappa<\gamma$ are both in $C$, then $V_\kappa\elesub V_\gamma$, as desired. Similarly, for (2), if $\kappa$ is uplifting, then $\kappa$ is pseudo uplifting and if $V_\kappa\elesub V_\gamma$ with $\gamma$ inaccessible, then there are unboundedly many ordinals $\beta<\gamma$ with $V_\beta\elesub V_\gamma$ and hence $V_\kappa\elesub V_\beta$. So $\kappa$ is pseudo uplifting in $V_\gamma$, and it follows that there must be unboundedly many pseudo uplifting cardinals below $\kappa$.  For (3), if $\kappa$ is inaccessible and $V_\kappa\elesub V_\gamma$, then $V_\gamma$ is a transitive set model of $\ZFC+$the \Levy\ scheme, and thus also a model of the scheme ``$\Ord$ is Mahlo.''
\end{proof}

So the existence of an uplifting cardinal, if consistent, is in consistency strength strictly between the existence of a Mahlo cardinal and the scheme ``$\Ord$ is Mahlo.'' We take these bounds both to be rather close together and also to be rather low in the large cardinal hierarchy. Note that a pseudo 0-uplifting cardinal is the same thing as a $0$-extendible cardinal. As a refinement of the \Levy\ scheme, recall that for any given natural number $n$, an inaccessible cardinal $\kappa$ is {\df $\Sigma_n$-reflecting} if $H_\kappa\elesub_{\Sigma_n} V$. Recall also that $H_\kappa\elesub_{\Sigma_1}V$ whenever $\kappa$ is any uncountable cardinal.

\begin{observation}\label{O.UpliftSigma3Reflectg}\
\begin{enumerate}
     \item Every uplifting cardinal is a limit of $\Sigma_3$-reflecting cardinals, and is itself $\Sigma_3$-reflecting.
     \item If $\kappa$ is the least uplifting cardinal, then $\kappa$ is not $\Sigma_4$-reflecting, and there are no $\Sigma_4$-reflecting cardinals below $\kappa$.
 \end{enumerate}
\end{observation}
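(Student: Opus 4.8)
The plan for part (1) is to show that every uplifting — indeed every pseudo uplifting — cardinal $\kappa$ satisfies $V_\kappa\elesub_{\Sigma_3}V$, which makes $\kappa$ a $\Sigma_3$-reflecting cardinal since it is inaccessible and $V_\kappa=H_\kappa$. The engine is the \Levy\ absoluteness recalled above, applied both to $\kappa$ and to its targets: $V_\kappa=H_\kappa\elesub_{\Sigma_1}V$, and whenever $V_\kappa\elesub V_\gamma$ is an uplifting target, $\gamma$ is a $\beth$-fixed point (being the target of a proper elementary extension), so $V_\gamma=H_\gamma\elesub_{\Sigma_1}V$ as well. From $V_\gamma\elesub_{\Sigma_1}V$ one gets that every $\Pi_2$ assertion with parameters in $V_\gamma$ is downward absolute from $V$ to $V_\gamma$ (and every $\Pi_1$ assertion is downward absolute to any transitive set containing its parameters). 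I would then show by induction on $n$ that $V_\kappa\elesub_{\Sigma_n}V$ for $n=1,2,3$. The case $n=1$ is \Levy\ absoluteness. For the step from $n$ to $n+1$, with $n\in\{1,2\}$, consider a $\Sigma_{n+1}$ assertion $\exists x\,\pi(x,a)$ with $\pi$ of complexity $\Pi_n$ and $a\in V_\kappa$. If it holds in $V$, fix a witness $x_0$, pick an uplifting target $V_\kappa\elesub V_\gamma$ with $\gamma$ above the rank of $x_0$ (so $x_0,a\in V_\gamma$); since $\Pi_n$ for $n\le 2$ is downward absolute to $V_\gamma$, we get $V_\gamma\satisfies\pi(x_0,a)$, hence $V_\gamma\satisfies\exists x\,\pi(x,a)$, and pulling back along the full elementary extension $V_\kappa\elesub V_\gamma$ (with $a\in V_\kappa$) gives $V_\kappa\satisfies\exists x\,\pi(x,a)$. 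Conversely, if $V_\kappa\satisfies\pi(x_0,a)$ with $x_0\in V_\kappa$, then the induction hypothesis $V_\kappa\elesub_{\Sigma_n}V$ yields agreement on the $\Pi_n$ assertion $\pi(x_0,a)$, so $V\satisfies\pi(x_0,a)$. Thus $V_\kappa\elesub_{\Sigma_{n+1}}V$. The induction halts at $n=3$ precisely because continuing it would require $\Pi_3$ to be downward absolute to the target $V_\gamma$, which needs $V_\gamma\elesub_{\Sigma_2}V$, and a bare uplifting target is not known to be $\Sigma_2$-reflecting.

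For the ``limit of $\Sigma_3$-reflecting cardinals'' half of part (1), I would note that the argument above used only that $\kappa$ is pseudo uplifting, so every pseudo uplifting cardinal is $\Sigma_3$-reflecting; and since every uplifting cardinal is a limit of pseudo uplifting cardinals by theorem~\ref{T.ConBoundUplift}(2), it is a limit of $\Sigma_3$-reflecting cardinals.

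For part (2) the plan rests on a complexity computation. Expressing ``$V_x\elesub V_y$'' by means of the absolutely definable satisfaction relations of the set structures $V_x$ and $V_y$ — and recalling that a proper such extension forces $x$ and $y$ to be $\beth$-fixed points, so $V_x=H_x$ and $V_y=H_y$ — one checks that ``$x$ is inaccessible'' is $\Delta_2$ and ``$V_x\elesub V_y$'' is $\Sigma_2$; consequently ``$x$ is uplifting'', namely ``$x$ is inaccessible and for every ordinal $\theta$ there is an inaccessible $\gamma\ge\theta$ with $V_x\elesub V_\gamma$'', is $\Pi_3$, and ``there is an uplifting cardinal'' is $\Sigma_4$. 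Now let $\kappa$ be the least uplifting cardinal and suppose toward a contradiction that there is a $\Sigma_4$-reflecting cardinal $\mu_0\le\kappa$, so $V_{\mu_0}\elesub_{\Sigma_4}V$. Since $V\satisfies$``there is an uplifting cardinal'' (witnessed by $\kappa$) and this is $\Sigma_4$, we get $V_{\mu_0}\satisfies$``there is an uplifting cardinal'', so there is some $\lambda<\mu_0$ with $V_{\mu_0}\satisfies$``$\lambda$ is uplifting''. But ``$\lambda$ is uplifting'' is $\Pi_3$, and $V_{\mu_0}\elesub_{\Sigma_4}V$ entails agreement on all $\Pi_3$ assertions with parameters in $V_{\mu_0}$, so $V\satisfies$``$\lambda$ is uplifting''; as $\lambda<\mu_0\le\kappa$, this contradicts the minimality of $\kappa$. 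The case $\mu_0=\kappa$ shows $\kappa$ is not $\Sigma_4$-reflecting, and the case $\mu_0<\kappa$ shows there are no $\Sigma_4$-reflecting cardinals below $\kappa$.

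The main obstacle, and the place needing the most care, is the absoluteness bookkeeping that pins the thresholds at exactly $\Sigma_3$ and $\Sigma_4$: the crucial asymmetry is that $\Pi_2$ assertions reflect downward to every inaccessible (or $\beth$-fixed-point) rank-initial-segment $V_\gamma$ via $V_\gamma\elesub_{\Sigma_1}V$, whereas $\Pi_3$ assertions do not. The complexity calculation for ``uplifting'' in part (2) is routine once one commits to expressing the elementarity of $V_x$ inside $V_y$ through their set-sized satisfaction relations, but it must be carried out carefully so as to land on $\Pi_3$.
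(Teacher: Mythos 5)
Your proof is correct, and its two main components line up with the paper's: the verification that $V_\kappa\elesub_{\Sigma_3}V$ proceeds exactly as in the paper (witness $x_0$, target $\gamma$ above its rank, \Levy\ absoluteness $V_\gamma=H_\gamma\elesub_{\Sigma_1}V$ giving downward $\Pi_2$-absoluteness, then pull back along $V_\kappa\elesub V_\gamma$; your explicit induction up to $\Sigma_3$ just spells out the ``converse direction is easier'' remark), and your part (2) is precisely the argument the paper compresses into the observation that upliftingness is $\Pi_3$ and hence its existence is $\Sigma_4$. Where you genuinely diverge is the ``limit of $\Sigma_3$-reflecting cardinals'' clause. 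The paper argues directly: if the $\Sigma_3$-reflecting cardinals below $\kappa$ were bounded, then $V_\kappa$ (being $\Sigma_3$-correct) would see the bound, elementarity would transfer this boundedness statement to a target $V_\gamma$, and the contradiction is that $\kappa$ itself is $\Sigma_3$-reflecting (indeed fully elementary) in $V_\gamma$. You instead note that your $\Sigma_3$-correctness argument uses only pseudo upliftingness---the targets need only be cardinals, hence $\beth$-fixed points with $V_\gamma=H_\gamma$---and then invoke theorem~\ref{T.ConBoundUplift}(2), that every uplifting cardinal is a limit of pseudo uplifting cardinals. Both routes are sound; the paper's is self-contained within the observation but requires the mildly delicate point that $V_\kappa$ computes $\Sigma_3$-reflectingness of cardinals below $\kappa$ correctly, while yours leans on the earlier theorem and, as a bonus, establishes in passing the paper's subsequent remark that every pseudo uplifting cardinal is $\Sigma_3$-reflecting and that uplifting cardinals are limits of such.
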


\begin{proof} For (1), suppose that $\kappa$ is uplifting, and let us first show that $\kappa$ is $\Sigma_3$-reflecting. Thus, assume that $V\models\exists x \varphi(x,a)$ for some $\Pi_2$ formula $\varphi(x,y)$ and some $a\in V_\kappa$. Let $x_0$ be a witness such that $V\models\varphi(x_0,a)$, and let $\gamma$ be any uncountable cardinal with $x_0\in V_\gamma$ such that $V_\kappa\elesub V_\gamma$. Since $V_\gamma$ is existentially closed in $V$, it follows that $\Pi_2$ formulas are downwards absolute to $V_\gamma$, and so $V_\gamma\models\varphi(x_0,a)$, which implies by elementarity that $V_\kappa\models\exists x\varphi(x,a)$, as desired; the converse direction is easier. Next, suppose for contradiction that the set of $\Sigma_3$-reflecting cardinals is bounded below $\kappa$. Then $V_\kappa$ sees this bound, since $\kappa$ is $\Sigma_3$-reflecting. Thus, if $V_\kappa\elesub V_\gamma$, then $V_\gamma$ thinks that the set of $\Sigma_3$-reflecting cardinals is bounded below $\kappa$. But this is impossible, since $\kappa$ itself is (much more than) $\Sigma_3$-reflecting in $V_\gamma$.

Statement (2) is an immediate consequence of the fact that the property of being uplifting is $\Pi_3$ expressible, and so the existence of an uplifting cardinal is a $\Sigma_4$ assertion.
\end{proof}

The analogous observation for pseudo uplifting cardinals holds as well, namely, every pseudo uplifting cardinal is $\Sigma_3$-reflecting and a limit of $\Sigma_3$-reflecting cardinals; and if $\kappa$ is the least pseudo uplifting cardinal, then $\kappa$ is not $\Sigma_4$-reflecting, and there are no $\Sigma_4$-reflecting cardinals below $\kappa$.

For an uplifting cardinal $\kappa$, we say that a function $f\from \kappa \to \kappa$ has the {\df uplifting Menas property} for $\kappa$ if for every ordinal $\theta$ there is an inaccessible cardinal $\gamma$ above $\theta$ and a function $f^*\from\gamma\to\gamma$ such that $\< V_\kappa,f> \elesub \< V_\gamma,f^*>$ and $\theta\leq f^*(\kappa)$.\footnote{Analogous Menas  properties of functions $f\from\kappa\to\kappa$  are defined for various large cardinals $\kappa$, not just for uplifting cardinals (see~\cite{Hamkins2000:LotteryPreparation}), and their definitions change depending on the particular large cardinal in question. However, we will simply refer to it as the \emph{Menas property} for $\kappa$ if it is clear from context which large cardinal property of $\kappa$ we are concerned with.} In the cases below where the function $f$ is actually definable in $V_\kappa$, then of course we needn't add it as a separate predicate to the structure, and it will suffice that $V_\kappa\elesub V_\gamma$ and $\theta\leq f^*(\kappa)$, where $f^*$ is the corresponding function defined in $V_\gamma$.

\begin{theorem}\label{T.UpliftImplyMenas} Every uplifting cardinal has a function with the Menas property. Indeed, there is a class function $f\from\ORD\to\ORD$ such that for every uplifting cardinal $\kappa$, the restriction $f\restrict\kappa\from\kappa\to\kappa$ has the Menas property for $\kappa$, and $f\restrict\kappa$ is a definable class in $V_\kappa$.
\end{theorem}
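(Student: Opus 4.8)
The plan is to take for $f$ a ``supremum of comparable targets'' function and to extract everything from the fact (Observation~\ref{O.UpliftSigma3Reflectg}) that every uplifting cardinal is $\Sigma_3$-reflecting. Specifically, I would define $f\from\ORD\to\ORD$ by letting $f(\alpha)$ be the supremum of the set of inaccessible cardinals $\gamma>\alpha$ for which $V_\alpha\elesub V_\gamma$, provided that this set is bounded in the ordinals, and $f(\alpha)=0$ otherwise. This is visibly a definable class function, so the work is to verify, for each uplifting cardinal $\kappa$, that $f\restrict\kappa$ is a function $\kappa\to\kappa$, that it is definable over $V_\kappa$ (indeed by the very same formula, relativized), and that it has the Menas property.

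For the first two points I would argue as follows. Fix $\alpha<\kappa$. Since ``$\alpha$ is uplifting'' is a $\Pi_3$ assertion with parameter $\alpha\in V_\kappa$ and $\kappa$ is $\Sigma_3$-reflecting, this property is absolute between $V_\kappa$ and $V$; in particular the inaccessible targets of $\alpha$ are bounded in the ordinals exactly when they are bounded below $\kappa$. Moreover, if they are bounded, I claim they all lie below $\kappa$: inside $V_\kappa$ there is already a bound $\delta_0<\kappa$, and were there a target $\delta^{*}\geq\kappa$ we could choose a target $\gamma$ of $\kappa$ above $\delta^{*}$, and then the $V_\kappa$-true statement ``every inaccessible target of $\alpha$ is $\leq\delta_0$'' (with parameters $\alpha,\delta_0\in V_\kappa$) would, by $V_\kappa\elesub V_\gamma$, hold in $V_\gamma$ and be refuted there by $\delta^{*}$. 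Hence $f(\alpha)$ is either $0$ or the supremum of a set of ordinals bounded below $\kappa$, so $f(\alpha)<\kappa$, and the computation of $f(\alpha)$ carried out inside $V_\kappa$ agrees with the true value; this yields $f\restrict\kappa\from\kappa\to\kappa$ together with its definability over $V_\kappa$ by the relativized formula.

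For the Menas property, fix $\theta$. Since $\kappa$ is uplifting its inaccessible targets form a proper class; enumerate them increasingly as $\langle\gamma_\xi\rangle$, choose a limit ordinal $\lambda$ with $\eta:=\sup_{\xi<\lambda}\gamma_\xi\geq\theta$, and let $\gamma$ be the least target of $\kappa$ above $\eta$. Then $\gamma$ is inaccessible, $\gamma>\eta\geq\theta$, and $V_\kappa\elesub V_\gamma$, so by the remark following the definition of the Menas property it suffices to check that the relativized function $f^{V_\gamma}$ has $f^{V_\gamma}(\kappa)\geq\theta$. But the inaccessible targets of $\kappa$ below $\gamma$ are exactly $\{\gamma_\xi:\xi<\lambda\}$ (there are none in $(\eta,\gamma)$ by the choice of $\gamma$, and their supremum $\eta$ is not one of them, being singular), so inside $V_\gamma$ the relevant set is bounded with supremum $\eta$, whence $f^{V_\gamma}(\kappa)=\eta\geq\theta$. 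One should also observe that $f^{V_\gamma}$ really is a function $\gamma\to\gamma$: for $\alpha<\gamma$ whose inaccessible targets are cofinal in $\gamma$ its value is $0$, and otherwise it is the supremum of a set bounded below $\gamma$.

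The main obstacle is the second paragraph: a priori some $\alpha<\kappa$ could have inaccessible targets reaching above $\kappa$, which would throw $f(\alpha)$ out of $\kappa$, or $V_\kappa$ could be wrong about whether $\alpha$ is uplifting, which would break the definability of $f\restrict\kappa$; ruling both out is precisely where the $\Sigma_3$-reflectingness of uplifting cardinals is used, and I expect this to be the most delicate step. Once it is in place the Menas property is the short computation above with the right choice of target $\gamma$, and since $f$ is given once and for all by a single formula there is nothing further to check about its coherence as a class function.
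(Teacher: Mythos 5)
Your proposal is correct and essentially reproduces the paper's argument: the same failure-of-upliftingness function (the supremum of the inaccessible targets, when bounded), the same verification that $V_\kappa$ computes it correctly for $\delta<\kappa$, and the same Menas computation obtained by passing to the least inaccessible target of $\kappa$ above a suitable ordinal $\geq\theta$ (the paper takes a single target $\gamma\geq\theta$ and the next target $\lambda$ above it, where you take a limit $\eta$ of targets and the next target $\gamma$ above $\eta$). The only quibble is your parenthetical claim that $\eta$ is singular, which requires choosing $\lambda$ with $\cof(\lambda)<\eta$ (e.g.\ of countable cofinality); but even without it your conclusion stands, since $\gamma$ being the least target above $\eta$ already forces $f^{V_\gamma}(\kappa)=\eta\geq\theta$ whether or not $\eta$ is itself a target.
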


\begin{proof}
The \emph{failure-of-upliftingness} function $f\from\Ord\to\Ord$ has the desired property. Namely, if $\delta$ is a cardinal but not uplifting, then let $f(\delta)$ be the supremum of the inaccessible cardinals $\gamma$ for which $V_\kappa\elesub V_\gamma$. If $\kappa$ is uplifting, then by the elementarity of $V_\kappa\elesub V_\gamma$ for increasingly large $\gamma$, it follows that $V_\kappa$ correctly computes the value of $f(\delta)$ for every $\delta<\kappa$. In particular, $f(\delta)<\kappa$ for any non-uplifting cardinal $\delta<\kappa$, so that   $f\image\kappa\of\kappa$, and the restriction $f\restrict\kappa$ is the failure-of-upliftingness function as defined in $V_\kappa$.

To see that $f\restrict\kappa$ has the Menas property for $\kappa$, fix any ordinal $\theta$ and any inaccessible cardinal $\gamma\geq\theta$ with $V_\kappa\elesub V_\gamma$. Applying the fact that $\kappa$ is uplifting again, let $\lambda$ be the smallest inaccessible cardinal above $\gamma$ for which $V_\kappa\elesub V_\lambda$. If $f^*\from\lambda\to\lambda$ is the failure-of-upliftingness function as defined in $V_\lambda$, then since this function is definable, we have $\<V_\kappa,f>\elesub \<V_\lambda,f^*>$. And because $V_\lambda$ can see that $V_\kappa\elesub V_\gamma$, but by the minimality of $\lambda$ can see no higher inaccessible cardinal to which $V_\kappa$ extends elementarily, it follows that $f^*(\kappa)=\gamma$, thereby witnessing the desired Menas property.
\end{proof}

We now strengthen the previous theorem by proving that every uplifting cardinal has functions with certain uplifting Laver properties, properties that strengthen the uplifting Menas property of theorem~\ref{T.UpliftImplyMenas} significantly. We shall see in section~\ref{S.ExactStrength} that the uplifting Menas property suffices to obtain equiconsistency results for instances of resurrection such as $\RAall$, $\RA(\proper)+\neg\CH$, $\RA(\semiproper)+\neg\CH$ and others, but it does not seem to suffice to obtain the corresponding result for $\RA(\ccc)$.

If $\kappa$ is an uplifting cardinal, define that \hbox{$\ell\from\kappa\to H_\kappa$} is an \emph{uplifting Laver function} for $\kappa$, if for every set $x$ there are unboundedly many inaccessible cardinals $\gamma$ with a corresponding function $\ell^*\from\gamma\to H_\gamma$ such that $\<H_\kappa,\ell>\elesub\<H_\gamma,\ell^*>$ and $\ell^*(\kappa)=x$. Following the scheme of axioms in \cite{Hamkins:LaverDiamond}, let us say that the uplifting Laver Diamond $\LDuplift_\kappa$ holds at $\kappa$ when there is such a function $\ell\from\kappa\to H_\kappa$. For a natural weakening of this concept, we say that $\ell \from \kappa \to \kappa$ is an {\df ordinal-anticipating} uplifting Laver function for $\kappa$, if for every ordinal $\beta$ there are unboundedly many inaccessible cardinals $\gamma$ with a corresponding function $\ell^*\from\gamma\to\gamma$ such that $\< H_\kappa,\ell> \elesub \< H_\gamma,\ell^*>$ and $\ell^*(\kappa)=\beta$. Similarly, we have the concept of a {\df $\HOD$-anticipating} uplifting Laver function, where we can achieve $\ell^*(\kappa)=x$ for any $x\in\HOD$.

We think of the uplifting Laver functions as in statement~(3) of the following theorem as the world's smallest Laver functions, in light of the fact that uplifting is weaker than Mahlo.

\begin{theorem}\label{T.VariousUpliftingLaverFcns}\

\begin{enumerate}
\item Every uplifting cardinal $\kappa$ has an ordinal-anticipating uplifting Laver function $\ell\from\kappa \to \kappa$ definable in $H_\kappa$.
\item Every uplifting cardinal $\kappa$ has a $\HOD$-anticipating uplifting Laver function $\ell\from\kappa\to H_\kappa$ definable in $H_\kappa$.
\item If $V=\HOD$, then every uplifting cardinal $\kappa$ has an uplifting Laver function $\ell\from\kappa\to H_\kappa$ definable in $H_\kappa$.
  \end{enumerate}
\end{theorem}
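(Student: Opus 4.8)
The plan is to write down all three Laver functions by explicit definitions, in the spirit of the failure-of-upliftingness function that proved the Menas property in theorem~\ref{T.UpliftImplyMenas}. For~(1), fix the Gödel pairing of $\Ord$ with $\Ord$ and let $(\xi)_1$ denote the second coordinate of $\xi$ under its inverse, so that for every ordinal $\beta$ the class $\{\xi\st(\xi)_1=\beta\}$ is unbounded in $\Ord$. Define the $\emptyset$-definable class function $\ell\from\Ord\to\Ord$ by
$$\ell(\delta):=\bigl(\ot\{\gamma>\delta\st\gamma\text{ is inaccessible and }V_\delta\elesub V_\gamma\}\bigr)_1$$
whenever that class of ordinals $\gamma$ is a set, and $\ell(\delta):=0$ otherwise. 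Since the predicate ``$V_\delta\elesub V_\gamma$'' is a genuine first-order assertion about the sets $V_\delta$ and $V_\gamma$ (the satisfaction relation for set structures being absolute), for any inaccessible $\kappa$ the definition interpreted in $H_\kappa=V_\kappa$ yields a total function $\ell^{V_\kappa}\from\kappa\to\kappa$, definable in $H_\kappa$.

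The main computation is that if $\kappa$ is uplifting, with inaccessible targets enumerated increasingly as $\<\gamma^\xi\st\xi\in\Ord>$, then $\ell^{V_{\gamma^\xi}}(\kappa)=(\xi)_1$ for every $\xi$. Indeed, inside $V_{\gamma^\xi}$ the set of $\gamma'\in(\kappa,\gamma^\xi)$ that are inaccessible with $V_\kappa\elesub V_{\gamma'}$ is, by the absoluteness between $V_{\gamma^\xi}$ and $V$ of inaccessibility and of satisfaction for the structures $V_\kappa$ and $V_{\gamma'}$ with $\gamma'<\gamma^\xi$, exactly $\{\gamma^\eta\st\eta<\xi\}$, of ordertype $\xi$. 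Because $\ell$ is definable and $V_\kappa\elesub V_{\gamma^\xi}$, we obtain $\<H_\kappa,\ell^{V_\kappa}>\elesub\<H_{\gamma^\xi},\ell^{V_{\gamma^\xi}}>$ (and in particular $\ell^{V_{\gamma^\xi}}\restrict\kappa=\ell^{V_\kappa}$). Given any ordinal $\beta$, the targets $\gamma^\xi$ with $(\xi)_1=\beta$ are then unboundedly many inaccessible cardinals witnessing ordinal-anticipation at $\beta$; this proves~(1).

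For~(2) and~(3) I would post-compose $\ell$ with a decoding map. Fix the standard absolute map $d$ sending an ordinal $\alpha$, viewed as coding a triple $(\varphi,\beta_0,\vec\beta)$ of a formula, an ordinal $\beta_0$, and a tuple $\vec\beta$ of ordinals below $\beta_0$, to the unique $y\in V_{\beta_0}$ with $V_{\beta_0}\satisfies\varphi(y,\vec\beta)$ if there is one, and to $\emptyset$ otherwise; set $\ell_d(\delta):=d(\ell(\delta))$. Then $\ell_d^{V_\kappa}$ is again definable in $H_\kappa$ and maps into $H_\kappa$. Now, given $x\in\HOD$, the set $x$ is ordinal-definable, so by the reflection theorem there is a club of ordinals $\beta_0$ such that $x\in V_{\beta_0}$ and $x$ is the unique element of $V_{\beta_0}$ satisfying its defining formula with its (now bounded) ordinal parameters; fixing such a $\beta_0$ and letting $\alpha_x$ code the resulting triple, part~(1) gives unboundedly many inaccessible targets $\gamma$ of $\kappa$ with $\ell^{V_\gamma}(\kappa)=\alpha_x$, and taking $\gamma>\beta_0$ we get $\ell_d^{V_\gamma}(\kappa)=d^{V_\gamma}(\alpha_x)=x$ by absoluteness of $d$, with $\<H_\kappa,\ell_d^{V_\kappa}>\elesub\<H_\gamma,\ell_d^{V_\gamma}>$ as before. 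This proves~(2). Statement~(3) is the same argument verbatim, since under $V=\HOD$ \emph{every} set $x$ is ordinal-definable, so the reflection step applies to an arbitrary set.

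The delicate point, and the one I would be most careful about, is the interaction of definability with elementary extension: the reason a $\emptyset$-definable function $\ell$ automatically satisfies $\<H_\kappa,\ell^{V_\kappa}>\elesub\<H_\gamma,\ell^{V_\gamma}>$ whenever $V_\kappa\elesub V_\gamma$ is that every formula in the expanded language may be rewritten by substituting the definition of the predicate, after which plain elementarity applies---and, the definition being functional, this also forces $\ell^{V_\gamma}$ to agree with $\ell^{V_\kappa}$ on arguments below $\kappa$, so no separate coherence lemma is needed. The rest is bookkeeping: confirming that inaccessibility, the satisfaction relation, and the map $d$ are computed correctly at arguments and rank-bounds below a target $\gamma$ (they are, being of low complexity relative to the sets already present in $V_\gamma$), and that the target $\gamma$ can be chosen large enough---above $\beta_0$ and above the finitely many ordinals in play---which is possible because part~(1) supplies targets unbounded in $\Ord$.
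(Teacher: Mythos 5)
Your overall strategy is the same as the paper's: define the Laver functions outright by parameter-free formulas, obtain part~(1) by coding an ordinal into the order type of the class of inaccessible targets, obtain~(2) by post-composing with an ordinal-definability decoding justified by reflection, and get~(3) from $V=\HOD$; in both treatments the elementarity $\<H_\kappa,\ell>\elesub\<H_\gamma,\ell^*>$ comes for free from definability. But the central computation in your part~(1) fails as stated: the claim that $\ell^{V_{\gamma^\xi}}(\kappa)=(\xi)_1$ for \emph{every} $\xi$. Inside $V_{\gamma^\xi}$ the relevant collection is indeed $\{\gamma^\eta\st\eta<\xi\}$, of order type $\xi$, but your definition assigns this value only when that collection is a \emph{set} of $V_{\gamma^\xi}$, that is, bounded below $\gamma^\xi$; otherwise it defaults to $0$. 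If $\xi$ is a fixed point of the target enumeration, $\gamma^\xi=\xi$, then since $\gamma^\eta\geq\eta$ the earlier targets are cofinal in $\gamma^\xi$, the collection is a proper class from the point of view of $V_{\gamma^\xi}$, and $\ell^{V_{\gamma^\xi}}(\kappa)=0$, not $(\xi)_1$. Such fixed points genuinely occur: below a Mahlo cardinal $\delta$ (exactly the context of the paper's upper bound) the targets of $\kappa$ meet the club of closure points of the order-type function, producing inaccessible targets that are limits of $\gamma$-many targets.

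The gap is local and repairable. For a fixed $\beta$ you only need unboundedly many \emph{non-fixed-point} indices $\xi$ with $(\xi)_1=\beta$, and these exist: a fixed point $\xi=\gamma^\xi$ is necessarily an inaccessible cardinal, whereas for any non-cardinal $\alpha>\beta$ the ordinal $\xi$ coding the pair $(\alpha,\beta)$ under the \Godel\ pairing satisfies $\alpha\leq\xi<|\alpha|^+$, so it has cardinality $|\alpha|<\xi$ and is not a cardinal; such $\xi$ are unbounded, and for them regularity of $\gamma^\xi>\xi$ forces the targets below $\gamma^\xi$ to be bounded, so your computation goes through and those $\gamma^\xi$ witness anticipation at $\beta$. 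Note that the paper's encoding is designed precisely to dodge this issue: it requires the order type to be exactly $\theta+\beta$ for an infinite cardinal $\theta>\beta$, and for $\beta>0$ the ordinal $\theta+\beta$ is never a cardinal, hence never equal to the inaccessible target, so boundedness is automatic (and for $\beta=0$ one may take $\theta$ singular). Parts~(2) and~(3) of your argument are fine once~(1) is repaired and match the paper's route of decoding an ordinal into an ordinal-definable set via reflection and absoluteness of satisfaction over $V_{\beta_0}$.
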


\begin{proof}
For (1), working in $H_\kappa$, define that $\ell(\delta)=\beta$, if $\delta$ is a cardinal and the collection of inaccessible cardinals $\xi$ above $\delta$ with $H_\delta\elesub H_\xi$ has order type exactly $\theta+\beta$ for some infinite cardinal $\theta$ for which $\beta<\theta$. (Note that the decomposition $\theta+\beta$ is unique.) To see that $\ell$ is an ordinal-anticipating uplifting Laver function, fix any ordinal $\beta$ and any infinite cardinal $\theta$ above $\beta$. Since $\kappa$ is uplifting, there are unboundedly many inaccessible cardinals $\xi>\kappa$ with $H_\kappa \elesub H_\xi$. Let $\gamma$ be the $(\theta+\beta)^\th$ such $\xi$. In this case, there are precisely $\theta+\beta$ many such $\xi$ inside $H_\gamma$ with $H_\kappa\elesub H_\xi$, and so $\ell^*(\kappa)=\beta$, where $\ell^*$ is defined in $H_\gamma$ just as $\ell$ is defined in $H_\kappa$. The elementarity $H_\kappa\elesub H_\gamma$ extends to $\< H_\kappa,\ell> \elesub \< H_\lambda,\ell^*>$, because $\ell$ is definable, witnessing the desired instance of the Laver property.

For (2), let $f\from\kappa\to\kappa$ be an ordinal-anticipating uplifting Laver function, definable in $H_\kappa$, as in statement~(1). Define that $\ell(\delta)=x$, if $f(\delta)=\<\theta,\beta>$ is the ordinal code for a pair of ordinals, such that $x$ is ordinal definable in $V_\theta$ and $x$ is the $\beta^\th$ element of $\HOD^{V_\theta}$ using the definable well-ordering of $\HOD$ inside $V_\theta$. To see that $\ell$ is a $\HOD$-anticipating uplifting Laver function, suppose that $x\in\HOD$. It follows by reflection that $x$ is in the $\HOD$ of some $V_\theta$ and is the $\beta^\th$ element of $\HOD^{V_\theta}$ for some $\beta$. Since $f$ is an ordinal-anticipating uplifting Laver function, there are arbitrarily large inaccessible cardinals $\gamma$ for which $V_\kappa\elesub V_\gamma$ and $f^*(\kappa)=\<\theta,\beta>$, where $f^*$ is defined in $V_\gamma$ by the same definition of $f$ in $V_\kappa$. By construction, we have $\ell^*(\kappa)=x$, where $\ell^*$ is defined in $V_\gamma$ in analogy with $\ell$ in $V_\kappa$, witnessing the desired instance of the Laver property.

Statement~(3) is immediate from~(2).
\end{proof}

Just as with theorem~\ref{T.UpliftImplyMenas}, the proof of~(1) shows that there is a global ordinal-anticipating uplifting Laver function, a class function $\ell\from\ORD\to\ORD$ such that for every uplifting cardinal $\kappa$ the restriction $\ell\restrict\kappa\from\kappa\to\kappa$ is an ordinal-anticipating uplifting Laver function for $\kappa$, and $\ell\restrict\kappa$ is definable in $H_\kappa$. The proof of~(2) shows that there is a global $\HOD$-anticipating uplifting Laver function, defined accordingly. Statement~(3) asserts that $V=\HOD$ implies $\LDuplift_\kappa$ for every uplifting cardinal $\kappa$. Following~\cite{Hamkins:LaverDiamond}, we define $\LDuplift$ to be the assertion that there is a global uplifting Laver function, a class function $\ell\from\ORD\to V$ such that for every uplifting cardinal $\kappa$ the restriction $\ell\restrict\kappa\from\kappa\to H_{\kappa}$ is an uplifting Laver function for $\kappa$, and $\ell\restrict\kappa$ is definable in $H_\kappa$. We have thus proved that $V=\HOD$ implies $\LDuplift$.

\begin{question}
 Can there be an uplifting cardinal with no uplifting Laver function? In other words, is it consistent that $\kappa$ is uplifting ${}+\neg\LDuplift_\kappa$?
\end{question}

Although we have proved that an uplifting cardinal can have a Laver function, we would like to remark that there is no analogue here of the Laver preparation that makes an uplifting cardinal Laver indestructible, because the main result of \cite{BagariaHamkinsTsaprounisUsuba:SuperstrongAndOtherLargeCardinalsAreNeverLaverIndestructible} shows that uplifting cardinals and even pseudo uplifting cardinals are never Laver indestructible.

\section{The exact large cardinal strength of the resurrection axioms}\label{S.ExactStrength}

In this section, we prove that many instances of the resurrection axioms, including $\RAall$, $\RA(\ccc)$, $\RA(\proper)+\neg\CH$ and others, are each equiconsistent with the existence of an uplifting cardinal.
The proof outline proceeds in two directions: on the one hand, theorem \ref{T.RAimplyCUplift} shows that many instances of the (weak) resurrection axioms imply that $\continuum^V$ is uplifting in $L$; and conversely, given any uplifting cardinal $\kappa$, we may perform a suitable lottery iteration of $\Gamma$ forcing to obtain the resurrection axiom for $\Gamma$ in a forcing extension with $\kappa=\continuum$. The main result is stated in theorem~\ref{T.Maintheorem}.

\begin{theorem}\label{T.RAimplyCUplift}\
 \begin{enumerate}
  \item $\RAall$ implies that $\continuum^V$ is uplifting in $L$.
   \item $\RA(\ccc)$ implies that $\continuum^V$ is uplifting in $L$.
   \item $\wRA(\text{countably closed})+\neg\CH$ implies that $\continuum^V$ is uplifting in $L$.
   \item  Under $\neg\CH$, the weak resurrection axioms for the classes of axiom-A forcing, proper forcing, semi-proper forcing, and posets that preserve stationary subsets of $\omega_1$, respectively, each imply  that $\continuum^V$ is uplifting in $L$.
 \end{enumerate}
\end{theorem}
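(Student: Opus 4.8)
The plan is to show in every case that $\kappa:=\continuum^V$ is uplifting in $L$, in the sense of Definition~\ref{D.ThetaUplift} relativized to $L$, by producing for each ordinal $\theta$ an inaccessible $\gamma\geq\theta$ of $L$ with $V_\kappa^L\elesub V_\gamma^L$ a proper elementary extension. The single engine is the following observation: whenever $\Q\in\Gamma$ and $g*h\of\Q*\Rdot$ is $V$-generic with $\Hc\elesub\Hc^{V[g*h]}$, as supplied by the relevant resurrection axiom, then writing $\gamma:=\continuum^{V[g*h]}$ the constructibility predicate relativizes $\Hc$ to its constructible sets, and for any transitive model $W$ of $\ZFCm$ and any cardinal $\lambda$ of $W$ the constructible sets of $(H_\lambda)^W$ are exactly $(L_\lambda)^W$; hence $\Hc$ relativizes to $L_\kappa$ and $\Hc^{V[g*h]}$ to $L_\gamma$, and since this relativization is uniformly definable, the elementarity descends to $L_\kappa\elesub L_\gamma$. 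Moreover $\kappa\leq\gamma$, since every ordinal below $\kappa$ lies in $\Hc\of\Hc^{V[g*h]}$.

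I would first settle the preliminaries. In each case $\kappa$ is regular in $V$, hence in $L$: under $\RAall$ we have $\CH$ and $\kappa=\omega_1^V$; under $\RA(\ccc)$ we have $\MA$ by Theorem~\ref{T.wRAimpliesBFA_kappa}, indeed $\kappa$ is weakly inaccessible in $V$ by Theorem~\ref{T.RAcccImplyCWeakInacc}; and under $\wRA(\Gamma)+\neg\CH$ for any class $\Gamma$ in~(3) or~(4), the poset $\Coll(\omega_1,\aleph_2)$ lies in $\Gamma$ and collapses $\aleph_2$, so Theorem~\ref{T.wRANotCollapseBelowC} forces $\continuum\leq\aleph_2$ and hence $\kappa=\aleph_2^V$. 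Two reflection facts follow. First, since $\kappa$ is regular in $V$, the structure $\Hc=\<(H_\kappa)^V,\in>$ satisfies every instance of the replacement schema, so by elementarity so does $\Hc^{V[g*h]}=\<(H_\gamma)^{V[g*h]},\in>$; as $(H_\gamma)^{V[g*h]}$ satisfies replacement exactly when the uncountable cardinal $\gamma$ is regular in $V[g*h]$, we get that $\gamma$ is regular in $V[g*h]$, hence regular in $L$. Second, if $\Hc$ has a largest cardinal that is named by a parameter-free term, or by a parameter lying in $\Hc$, then so does $\Hc^{V[g*h]}$, with the same term; this will be used to pin $\gamma$ down.

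The heart of the argument is, for each ordinal $\theta$ (which may be taken a cardinal of $V$ above $\kappa$), to produce a witnessing $\gamma\geq\theta$; this is where the three kinds of axiom use different forcings $\Q$. For $\RAall$, apply the axiom to $\Q=\Coll(\omega,\theta)$, which forces $\omega_1^{V[g]}>\theta$; since $(H_{\omega_1})^V$ satisfies ``every set is countable,'' so does $\Hc^{V[g*h]}=(H_\gamma)^{V[g*h]}$, forcing $\gamma=\omega_1^{V[g*h]}\geq\omega_1^{V[g]}>\theta$. For $\RA(\ccc)$, apply the axiom to $\Q=\Add(\omega,\theta)$; the resurrection witness $\Rdot$ is forced to be c.c.c., so $\Q*\Rdot$ is c.c.c.~over $V$, all cardinals are preserved, and $\gamma=\continuum^{V[g*h]}\geq\continuum^{V[g]}\geq\theta$. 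For $\wRA(\Gamma)+\neg\CH$ with $\Gamma$ as in~(3),(4), apply the axiom to $\Q=\Coll(\omega_1,\theta)$, which is countably closed and hence lies in $\Gamma$; it preserves $\omega_1$, collapses every cardinal of $V$ in the interval $(\aleph_1^V,\theta]$, and these remain collapsed in $V[g*h]$. Using the parameter $\aleph_1^V\in\Hc$ and the term ``least uncountable ordinal,'' the reflection gives $\omega_1^{V[g*h]}=\aleph_1^V$; since $(H_{\aleph_2^V})^V$ has $\aleph_1^V$ as its largest cardinal, $\Hc^{V[g*h]}$ has $\omega_1^{V[g*h]}=\aleph_1^V$ as its largest cardinal, so $\gamma=\omega_2^{V[g*h]}$; and as no cardinal of $V[g*h]$ lies in $(\aleph_1^V,\theta]$, we conclude $\gamma>\theta$. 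In all three cases $\gamma\geq\theta>\kappa$, the extension $L_\kappa\elesub L_\gamma$ is proper, and $\gamma$ is regular in $L$.

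Finally I would close the loop. From the proper elementary extension $L_\kappa\elesub L_\gamma$ one sees that $\kappa$ is a limit cardinal in $L$: were $\kappa=\aleph_{\alpha+1}^L$ with $\alpha<\kappa$, then $L_\kappa=(H_\kappa)^L$ would satisfy ``the $\alpha$-th infinite cardinal is the largest cardinal'' with parameter $\alpha$, which by elementarity passes to $L_\gamma=(H_\gamma)^L$, contradicting that the $L$-cardinal $\aleph_{\alpha+1}^L=\kappa$ lies in $L_\gamma$ and exceeds $\aleph_\alpha^L$. Being also regular in $L$, $\kappa$ is inaccessible in $L$, hence a $\beth$-fixed point of $L$, so $L_\kappa=V_\kappa^L$; and since $L_\kappa$ then satisfies ``there is no largest cardinal,'' this reflects to $L_\gamma$, forcing $\gamma$ to be a limit cardinal as well, hence, being regular in $L$, inaccessible in $L$ with $L_\gamma=V_\gamma^L$. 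Thus for every $\theta$ we have found an inaccessible $\gamma\geq\theta$ of $L$ with $V_\kappa^L\elesub V_\gamma^L$ a proper elementary extension, so $\kappa$ is uplifting in $L$. The crux of the proof—and the reason for the three separate choices of $\Q$—is exactly this coordination: the resurrected continuum $\gamma=\continuum^{V[g*h]}$ must be simultaneously large, at least $\theta$, and regular in $L$. This is automatic for $\RA(\ccc)$ from cardinal preservation of the c.c.c.~witness; for $\RAall$ and for the weak axioms, where the witness $\Rdot$ is unconstrained, one instead exploits that collapsing carried out by $\Q$ is permanent under further forcing and that the cardinal arithmetic of $\Hc$ is mirrored in $\Hc^{V[g*h]}$.
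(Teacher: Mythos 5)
Your overall architecture matches the paper's: in each case you take $\kappa=\continuum^V$, apply the axiom to a suitably chosen $\Q$ (a collapse to $\omega$ for $\RAall$, $\Add(\omega,\theta)$ for $\RA(\ccc)$, a countably closed collapse to $\omega_1$ for the weak axioms), and relativize the resulting elementarity $\Hc\elesub\Hc^{V[g*h]}$ to the constructible sets to get $H_\kappa^L\elesub H_\gamma^L$ with $\gamma=\continuum^{V[g*h]}$ large. But there is a genuine gap in how you certify that $\gamma$ is \emph{regular}. Your blanket ``reflection fact''---that $(H_\gamma)^{V[g*h]}$ satisfies the replacement scheme exactly when $\gamma$ is regular in $V[g*h]$---is false in the direction you use it: $H_\gamma\models\ZFCm$ does not imply that $\gamma$ is regular. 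For instance, after Prikry forcing at a measurable cardinal $\kappa$, no bounded subsets of $\kappa$ are added, so $H_\kappa^{V[G]}=H_\kappa^V$ still satisfies $\ZFCm$, yet $\kappa$ has become singular. So transferring the replacement scheme by elementarity from $\Hc$ to $\Hc^{V[g*h]}$ tells you nothing about the regularity of $\gamma$, and the inference ``hence regular in $L$'' does not follow.

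This flaw is harmless in cases (1), (3) and (4), because there you separately identify $\gamma$ as $\omega_1^{V[g*h]}$, respectively $\omega_2^{V[g*h]}$, which are trivially regular in $V[g*h]$ and hence in $L$. But in case (2), $\RA(\ccc)$, you have no valid argument: your closing remark that regularity of $\gamma=\continuum^{V[g*h]}$ is ``automatic from cardinal preservation of the c.c.c.~witness'' is not right---preservation of cardinals gives $\gamma\geq\theta$ and keeps $V$-cardinals alive, but it in no way prevents the continuum of $V[g*h]$ from being a singular cardinal (of uncountable cofinality), and your argument needs $\gamma$ regular in $L$ to conclude it is inaccessible there. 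The paper closes exactly this point differently: since $\RA(\ccc)$ implies $\MA$ (theorem~\ref{T.wRAimpliesBFA_kappa}), and $\MA$ is expressible in $\Hc$, elementarity transfers $\MA$ to $\Hc^{V[g*h]}$ and hence to $V[g*h]$, where $\MA$ implies that $\continuum^{V[g*h]}$ is regular, hence regular in $L$. Inserting that step (or some equivalent argument) is necessary to repair your case (2); the rest of your proof, including the relativization $L^{\Hc}=L_\kappa$ and the final argument that $\kappa$ and $\gamma$ are limit cardinals in $L$ and hence inaccessible there, is fine.
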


\begin{proof}
For (1), suppose that $\RAall$ holds. Then $\CH$ holds by theorem~\ref{T.wRANotCollapseBelowC}. Let $\kappa=\continuum=\omega_1$. To see that $\kappa$  is uplifting in $L$, it suffices by the remarks after definition~\ref{D.ThetaUplift} to show that $\kappa$ is regular in $L$, and that $H_\kappa^L\elesub H_\gamma^L$ for arbitrarily large ordinals $\gamma$ that are regular cardinals in $L$. The cardinal $\kappa$ is regular and therefore regular in $L$. Thus, fix any cardinal $\alpha >\kappa$, and let $\Q$ be a poset that collapses $\alpha$ to $\aleph_0$. By \RAall, there
is further forcing $\Rdot$, such that if $g*h\of\Q*\Rdot$ is $V$-generic, then $\Hc\elesub \Hc^{V[g*h]}$. Let $\gamma=\continuum^{V[g*h]}$. Since $\alpha$ was made countable in $V[g]$, it follows that $\alpha\lt\gamma$. Since $\Hc$ believes that every ordinal is countable, this is also true by elementarity in $\Hc^{V[g*h]]}$, and so $\CH$ holds in $V[g*h]$. It follows that $\gamma=\omega_1^{V[g*h]}$, and so $\gamma$ is regular in $V[g*h]$ and therefore also in $L$, with $\kappa < \alpha<\gamma$ and $H_\kappa^V \elesub H_\gamma^{V[g*h]}$. By relativizing formulas to the constructible sets, it follows that $H_\kappa^L=(H_\kappa^V\intersect L)\elesub (H_\gamma^{V[g*h]}\intersect L)=H_\gamma^L$, as desired.

For (2), suppose that $\RA(\ccc)$ holds, and let $\kappa=\continuum$. By theorem~\ref{T.RAcccImplyCWeakInacc}, we know that $\kappa$ is weakly inaccessible and therefore  inaccessible in $L$. Again, fix any cardinal $\alpha>\kappa$ and let $\Q=\text{Add}(\omega,\alpha)$ be now the forcing that adds $\alpha$ many Cohen reals. By $\RA(\ccc)$ there is further c.c.c.~forcing $\Rdot$ such that if $g*h\of\Q*\Rdot$ is $V$-generic, then $\Hc\elesub\Hc^{V[g*h]}$. Since $\continuum^{V[g]}\geq \alpha>\kappa$ and $\R$ does not collapse cardinals, it follows that $\continuum^{V[g*h]}\geq \alpha >\kappa$.  Since $\MA$ holds in $\Hc$, it holds in $\Hc^{V[g*h]}$ by elementarity, and hence also in $V[g*h]$. It follows that  $\continuum^{V[g*h]}$ is regular in $V[g*h]$ and hence in $L$. By relativizing formulas to $L$ it follows again that $H_\kappa^L\elesub H_\gamma^L$, where $\gamma=\continuum^{V[g*h]}$, as desired.

For (3), suppose that $\wRA(\text{countably closed})$ holds and $\CH$ fails. Then $\continuum = \aleph_2$ by theorem~\ref{T.wRANotCollapseBelowC}. Let $\kappa=\continuum=\aleph_2$, which is regular and therefore regular in $L$. Again, fix any cardinal $\alpha>\kappa$ and let $\Q$ be the countably closed forcing that collapses $\alpha$ to $\aleph_1$ using countable conditions. By $\wRA(\text{countably closed})$ there is $\Rdot$, such that if $g*h\of\Q*\Rdot$ is $V$-generic, then $\Hc\elesub \Hc^{V[g*h]}$. Since $\aleph_1^V$ is the largest cardinal in the former structure, this must also be true in $\Hc^{V[g*h]}$, and so $\continuum=\aleph_2^{V[g*h]}$. As $\alpha$ is an ordinal of size $\aleph_1$ in $V[g]$, and thus in $V[g*h]$, it follows that $\continuum^{V[g*h]}>\alpha$. If we let $\gamma=\continuum^{V[g*h]}=\aleph_2^{V[g*h]}$, then $\gamma$ is a regular cardinal above $\alpha$ in $V[g*h]$ and hence in $L$, and by relativizing formulas to $L$ it follows again that $H_\kappa^L\elesub H_\gamma^L$, as desired. Statement (4) is an immediate consequence of (3), since countably closed forcing is included in all those other classes of forcing.
\end{proof}

The failure of $\CH$ is a necessary assumption in statement (3) of theorem~\ref{T.RAimplyCUplift}, because $\wRA(\text{countably closed})$ holds in $L$ by theorem~\ref{T.RA(Gamma)iffCH}, but $\continuum^L$ is of course not uplifting in $L$. And it is similarly required in statement (4), since in the case of proper forcing, for instance, if $\kappa$ is an uplifting cardinal in $L$, we shall see later by first applying theorem~\ref{T.UpliftImplyRA(prop)} and then theorem~\ref{T.RA(prop)ConsisWithCH} that there is a proper forcing extension $L[G]$ of $L$ satisfying  $\RA(\proper)+\CH$, but  $\continuum^{L[G]}$ is not uplifting in $L$ as $\continuum^{L[G]}=\aleph_1^{L[G]}=\aleph_1^L$.

We now turn to the converse consistency implications, producing models of $\RA(\Gamma)$ for various natural forcing classes $\Gamma$ from models with an uplifting cardinal. In order to produce models of $\RA(\Gamma)$, our main tool will be to undertake various instances of what we call a lottery iteration, a forcing iteration in which each stage of forcing performs a lottery sum. The idea goes back to the lottery preparation of Hamkins \cite{Hamkins2000:LotteryPreparation}, which was introduced as an alternative to the Laver preparation in order to make large cardinals indestructible in situations where there is no Laver function.

Specifically, if $\mathcal{A}$ is a collection of partial orders, the \emph{lottery sum} of $\mathcal{A}$, denoted $\oplus\mathcal{A}$, is the partial order $\{\< \Q,q> \;|\;q\in\Q\in\mathcal{A}\}\cup\{\one\}$, ordered with $\one$ above everything and $\<\Q,q>\leq\<\P,p>$ if and only if $\Q=\P$ and $q\leq_{\Q}p$. Forcing with $\oplus\mathcal{A}$ amounts to choosing a winning poset from $\mathcal{A}$ and then forcing with it. (See \cite{Hamkins2000:LotteryPreparation}; the lottery sum is also commonly known as side-by-side forcing, and it is forcing equivalent to the Boolean product, without omitting $0$, of the corresponding Boolean algebras.) A {\df lottery iteration} is any forcing iteration in which each stage of forcing is the lottery sum of a collection of forcing notions. For any definable class $\Gamma$ of forcing notions and any $f\from\kappa\to\kappa$, the lottery iteration of $\Gamma$ forcing, relative to $f$, is the iteration of length $\kappa$ (with some specified support) that forces at stage $\beta\in\dom(f)$ with the lottery sum of all posets $\Q$ in $\Gamma^{V[G_\beta]}$ having hereditary size at most $f(\beta)$, and trivial forcing at stages $\beta\notin\dom(f)$. More generally, when we have a notion of what it means to be {\df allowed} at stage $\beta$, that is, if we have definable classes $\Gamma_\beta$ of forcing notions, then the corresponding lottery iteration forces at stage $\beta\in\dom(f)$ with the lottery sum of all $\Q\in\Gamma_\beta^{V[G_\beta]}$ of hereditary size at most $f(\beta)$, and again trivial forcing at stages $\beta\notin\dom(f)$. Although one could incorporate the size restriction imposed by $f(\beta)$ into the definition of $\Gamma_\beta$, it is more convenient to consider these as separate restrictions, one restriction on the type of forcing and another on the size of the forcing.

Just as the lottery preparation of a cardinal $\kappa$ relative to a function $f\from\kappa\to\kappa$ works best when $f$ exhibits a certain fast-growth behavior, called the Menas property in~\cite{Hamkins2000:LotteryPreparation}, the same is true of the lottery iterations considered here; and we proved in theorem~\ref{T.UpliftImplyMenas} that every uplifting cardinal has a function with the Menas property.

The lottery preparation of \cite{Hamkins2000:LotteryPreparation}, for example, is the Easton-support lottery iteration of length $\kappa$, relative to a function $f\from\kappa\to\kappa$ with the Menas property, where posets are allowed at inaccessible stage $\beta$ exactly if they are strategically $\delta$-closed for every $\delta<\beta$. In this article, we shall similarly perform the countable-support lottery iteration of proper forcing, relative to a Menas function $f\from\kappa\to\kappa$, as well as a similar lottery iteration of axiom-A forcing and a revised-countable-support lottery iteration of semi-proper forcing, among others.

A countable-support lottery iteration of proper posets was first employed in the second author's dissertation~\cite{Johnstone2007:Dissertation}, where he used it to prove the relative consistency of a certain fragment of $\PFA$ from a weaker-than-expected large cardinal hypothesis. Similar lottery iterations appear also in \cite{HamkinsJohnstone2009:PFA(aleph_2-preserving)}, including the revised-countable-support lottery iteration of semi-proper posets, which appears independently in~\cite{NeemanSchimmerling2008:HierarchiesOfForcingAxioms}.

Given an uplifting cardinal $\kappa$ and a corresponding elementary extension $H_\kappa\elesub H_\gamma$ for some inaccessible $\gamma>\kappa$, we shall use the next lemma to force with some poset $G\of \P\of H_\kappa$ and lift the elementarity to $H_\kappa[G]\elesub H_\gamma[G^*]$, which will turn out in our case to be the same as $\Hc^{V[G]}\elesub \Hc^{V[G][g*h]}$, which will thereby witness an instance of the resurrection axiom. Since $\P$ will be class forcing from the point of view of $H_\kappa$, rather than set forcing, there will be some complications that we must analyze.

If $M\models\ZFC $ and $A\of M$, then we shall say that the expanded structure $\<M,\in,A>$ satisfies $\ZFC$ to mean that it satisfies the version of \ZFC\ in which we allow a predicate symbols for the class $A$ to be used in instances of the replacement and separation axioms; this theory is also sometimes denoted $\ZFC(A)$. Define that a forcing notion $\P\of M$ is {\df nice} for class forcing over $\mathcal{M}=\<M,\in,A>$, if $\P$ is definable in $\mathcal{M}$, the corresponding forcing relations are definable in $\mathcal{M}$ and the truth lemma---asserting that a statement is true in a forcing extension exactly if it is forced by a condition in the generic filter---holds for forcing with $\P$ over $\mathcal{M}$.

Suppose that $\mathcal{M}\elesub \mathcal{M}^*$ for some model $M^*=\<M^*,\in,A^*>$ with $M^*$ transitive and $A^*\of M^*$, and suppose also that $\P^*\of M^*$ is the analogously defined class in $\mathcal{M}^*$. If $\P$ is nice for class forcing over $\mathcal{M}$, then we say that the niceness of $\P$ is \emph{preserved} to $\mathcal{M}^*$ if $\P^*$ is nice for class forcing over $\mathcal{M}^*$ and the forcing relations for forcing with $\P^*$ over $\mathcal{M}^*$ are defined in $\mathcal{M}^*$ by the same formulas and same parameters as those for forcing with $\P$ over $\mathcal{M}$ are defined in $\mathcal{M}$.

\begin{lemma}[Lifting Lemma] \label{L.LiftgStrUplift}
Suppose that $\<M,\in,A>\elesub\<M^*,\in,A^*>$ are transitive models of $\ZFC$, that $\P$ is a definable class in $\<M,\in,A>$ that is nice for forcing and that the niceness of $\P$ is preserved to the analogous class $\P^*$ defined in $\<M^*,\in,A^*>$. If $G\of\P$ is an $M$-generic filter and $G^*\of\P$ is $M^*$-generic with $G=G^*\intersect\P$, then \hbox{$\<M[G],\in,A,G>\elesub\<M^*[G^*],\in,A^*,G^*>$.}
\end{lemma}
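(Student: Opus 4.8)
The plan is to run the standard argument that an elementary containment between ground models lifts to an elementary containment between their forcing extensions, taking care that from the point of view of $\mathcal{M}$ the poset $\P$ is a proper class, so that we may invoke only the two features bundled into the hypothesis of niceness---definability of the forcing relation and the truth lemma---rather than the general set-forcing theorems, which need not hold for class forcing.

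First I would record the structural preliminaries. Since $\P$ is defined in $\mathcal{M}$ by a fixed formula (possibly mentioning $A$) and $\P^*$ is the class defined in $\mathcal{M}^*$ by that same formula, elementarity $\mathcal{M}\elesub\mathcal{M}^*$ gives $\P=\P^*\intersect M$, and in particular $\P\of\P^*$. An $\in$-induction then shows that every $\P$-name in $M$ is simultaneously a $\P^*$-name in $M^*$, and that for such a name $\dot a$ one has $\dot a_G=\dot a_{G^*}$; this uses that each condition occurring in $\dot a$ lies in $\P$ together with the hypothesis $G=G^*\intersect\P$, which forces $p\in G\iff p\in G^*$ for all such $p$. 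Hence every element of $M[G]$ has the form $\dot a_G=\dot a_{G^*}$ for some $\P$-name $\dot a\in M\of M^*$, and so to prove $\<M[G],\in,A,G>\elesub\<M^*[G^*],\in,A^*,G^*>$ it is enough to show, for every formula $\varphi$ of the forcing language (which carries a symbol for the ground predicate, interpreted via its canonical check-name, and a symbol for the generic, interpreted via $\dot G$) and all $\P$-names $\dot a_1,\dots,\dot a_n\in M$, that
$$\<M[G],\in,A,G>\models\varphi(\dot a_{1,G},\dots,\dot a_{n,G})\quad\Longrightarrow\quad\<M^*[G^*],\in,A^*,G^*>\models\varphi(\dot a_{1,G^*},\dots,\dot a_{n,G^*}).$$
Applying this to $\varphi$ and to $\neg\varphi$ yields the full biconditional, so that no density of $\P$ in $\P^*$ is needed.

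For that implication I would argue as follows. If the left-hand side holds, then by the truth lemma for the nice class forcing $\P$ over $\mathcal{M}$ there is $p\in G$ with $\mathcal{M}\models(p\forces_\P\varphi(\dot a))$. By the definability of the forcing relation this assertion has the form $\mathcal{M}\models\Vdash_\varphi(p,\dot a_1,\dots,\dot a_n)$ for a fixed formula $\Vdash_\varphi$ with fixed parameters, and because the niceness of $\P$ is preserved to $\P^*$, the very same formula with the very same parameters defines the forcing relation for $\P^*$ over $\mathcal{M}^*$. Since $p\in\P$ and $\dot a_1,\dots,\dot a_n\in M$, elementarity $\mathcal{M}\elesub\mathcal{M}^*$ transfers this to $\mathcal{M}^*\models(p\forces_{\P^*}\varphi(\dot a))$. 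Finally $p\in G=G^*\intersect\P\of G^*$, so the truth lemma for $\P^*$ over $\mathcal{M}^*$ gives $\<M^*[G^*],\in,A^*,G^*>\models\varphi(\dot a_{G^*})$, completing the implication.

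I expect the main obstacle to be bookkeeping rather than conceptual depth: one must fix a forcing language that explicitly includes symbols for the ground predicate $A$ and the generic filter and check that the niceness hypothesis really does deliver the truth lemma for all formulas of \emph{this} language and not merely for formulas of the pure language of set theory; one must make precise the canonical names that interpret these symbols; and one must verify that the clause ``defined by the same formulas and the same parameters'' in the definition of preservation of niceness is exactly the statement needed to push the equivalence between $p\forces_\P\varphi(\dot a)$ and $p\forces_{\P^*}\varphi(\dot a)$ across $\mathcal{M}\elesub\mathcal{M}^*$. It is also worth remarking at the outset why niceness is the natural hypothesis here: for a class-sized $\P$ the definability of the forcing relation and the truth lemma can both fail, and niceness is precisely the package of properties that the lottery iterations arising in the applications of this lemma will be shown to enjoy.
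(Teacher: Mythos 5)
Your proposal is correct and follows essentially the same route as the paper's proof: show that every $\P$-name in $M$ is a $\P^*$-name in $M^*$ with $\tau_G=\tau_{G^*}$ (using $G=G^*\intersect\P$), then transfer a single truth implication by passing through the truth lemma for $\P$ over $\mathcal{M}$, the definability of the forcing relation, the preservation of niceness (same formulas and parameters defining $\forces_{\P^*}$ in $\mathcal{M}^*$), elementarity $\mathcal{M}\elesub\mathcal{M}^*$, and the truth lemma for $\P^*$ over $\mathcal{M}^*$. Your explicit remarks about applying the implication to $\neg\varphi$ and about the forcing language containing symbols for $A$ and the generic merely spell out what the paper leaves implicit, so there is no substantive difference.
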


\begin{proof}
Suppose that $\mathcal{M}=\<M,\in,A>$ and $\mathcal{M}^*=\<M^*,\in,A^*>$ are as in the statement of the lemma, with the partial orders $\P$ and $\P^*$ as stated there, with generic filters $G\of\P$ and $G^*\of\P^*$ as supposed. If $\tau$ is any $\P$-name in $M$, then $\tau$ is also a $\P^*$-name in $M^*$, and a simple ${\in}$-induction shows that $\tau_G=\tau_{G^*}$. To see that  \hbox{$\<M[G],\in,A,G>\elesub\<M^*[G^*],\in,A^*,G^*>$},  suppose that $\<M[G],\in,A,G>\models \varphi[\tau_G]$ for some \hbox{$\P$-name $\tau$} and some formula $\varphi$ in the extended language of set theory with two unary predicate symbols. It suffices to show that $\<M^*[G^*],\in,A^*,G^*>\models \varphi[\tau_G]$. Since $\P$ is nice for class forcing over $\mathcal{M}$, there exists some condition $p\in G$ such that $p\forces_\P\varphi(\tau)$, and this statement is definable in $\mathcal{M}$. Since the niceness of $\P$ is preserved to $\mathcal{M}^*$, the forcing relations for $\P$ and $\P^*$ are analogously defined in $\mathcal{M}$ and $\mathcal{M}^*$, respectively, and it follows by elementarity that $p\forces _{\P^*}\varphi(\tau)$ for forcing over $\mathcal{M}^*$. Since $p\in G^*$ this means $\<M^*[G^*],\in,A^*,G^*>\models \varphi[\tau_{G^*}]$, as desired since $\tau_G=\tau_{G^*}$.
\end{proof}

If $\P\of M$ is a definable \emph{chain of complete subposets}\footnote{A class partial order $\P$ is a  \emph{chain of complete subposets} if there is a class $\<\P_\xi\st\xi<\ORD>$ of partially ordered sets $\P_\xi $ such that $\P=\bigcup_{\xi\in\ORD}\P_\xi$ and $\P_\xi$ is a complete subposet of $\P_\eta$ whenever $\xi\leq\eta$. See for instance~\cite{Reitz2006:Dissertation}.} in some transitive model $M\models\ZFC$, then it is a standard result in the theory of class forcing that $\P$ is nice for class forcing over $M$, and that the niceness of $\P$ is preserved to $M^*$ whenever \hbox{$M\elesub M^*$} for some transitive model $M^*$. Consequently, lemma~\ref{L.LiftgStrUplift} is widely applicable as many class partial orderings can be written as a chain of complete subposets, and it applies for instance to the special case when the partial order $\P$ is an $\ORD$-length forcing iteration in $M$, since every initial part of the iteration embeds completely into the later stages.

\begin{theorem} \label{T.UpliftImplyRA(prop)}
If $\kappa$ is an uplifting cardinal, then the countable-support lottery iteration of proper forcing, defined relative to a Menas function $f\from\kappa\to\kappa$, forces $\RA(\proper)$ and $\continuum=\kappa=\aleph_2$.
 \end{theorem}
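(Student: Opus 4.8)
The plan is to study the countable-support lottery iteration $\P=\P_\kappa$ of proper forcing relative to a Menas function $f\from\kappa\to\kappa$, which exists by theorem~\ref{T.UpliftImplyMenas}, and to verify the two conclusions in a $V$-generic extension $W=V[G]$. First I would record the standard iteration facts: $\P_\kappa$ is proper (Shelah, using that a lottery sum of proper posets is proper), it is $\kappa$-c.c.\ (since $\kappa$ is inaccessible and every stage has hereditary size below $\kappa$), and it has size $\kappa$; hence $\omega_1$ and $\kappa$ are preserved, $\kappa$ stays a cardinal, and $H_\kappa^W=H_\kappa[G]$ in the sense of the corresponding class-forcing extension. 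Since $\kappa$ is uplifting it is a limit of $\Sigma_3$-reflecting, hence inaccessible, cardinals by observation~\ref{O.UpliftSigma3Reflectg}, so a \Lowenheim--Skolem argument gives cofinally many stages $\beta<\kappa$ with $f(\beta)\geq\omega$; at such stages the lottery includes $\Add(\omega,1)$, so $\continuum^W\geq\kappa$, while $\kappa$-c.c.\ together with $\kappa^{\aleph_0}=\kappa$ gives $\continuum^W\leq\kappa$. Thus $\continuum^W=\kappa$ and $\Hc^W=H_\kappa[G]$.

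The heart of the matter is to verify $\RA(\proper)$ in $W$. Given any proper $\Q\in W$, I would apply the Menas property of $f$ to fix an inaccessible cardinal $\gamma$, as large as needed, together with $f^*\from\gamma\to\gamma$ such that $\<H_\kappa,f>\elesub\<H_\gamma,f^*>$ (using $V_\kappa=H_\kappa$ for the $\beth$-fixed points $\kappa,\gamma$) and $f^*(\kappa)$ above the hereditary size of $\Q$ in $W$. The key point---and the place where upliftingness substitutes for the supercompactness used in the analogous $\PFA$ consistency proofs---is that $\P_\kappa$, being an $\ORD$-length iteration definable over $\<H_\kappa,f>$, is a nice class forcing there (a chain of complete subposets), and by elementarity it is exactly the length-$\kappa$ initial segment of the analogous iteration $\P_\gamma^*$ of length $\gamma$ defined over $\<H_\gamma,f^*>$; moreover $\P_\gamma^*$ \emph{catches $\Q$ at stage $\kappa$}, since, up to isomorphism, $\Q$ lies in $H_\gamma[G]$, is proper there, and has size at most $f^*(\kappa)$, so it appears in the stage-$\kappa$ lottery of $\P_\gamma^*$ as computed in $H_\gamma[G]$. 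I would then let $\Rdot\in W$ be the $\Q$-name for the tail $\P^*_{(\kappa,\gamma)}$ of this iteration: it is a set lying in $H_\gamma[G*g]$, and it is forced to be proper, being a countable-support iteration of proper posets.

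With $\Rdot$ so chosen, consider any $V$-generic $g*h\of\Q*\Rdot$ over $W$. Routing $g$ through the stage-$\kappa$ lottery (below the condition declaring $\Q$ to be the winner) one checks that $G*g*h$ generates an $H_\gamma$-generic filter $G^*$ for $\P_\gamma^*$ with $G^*\intersect\P_\kappa=G$. Because the niceness of $\P_\kappa$ over $\<H_\kappa,f>$ is preserved to $\P_\gamma^*$ over $\<H_\gamma,f^*>$, the lifting lemma~\ref{L.LiftgStrUplift} applies and yields $\<H_\kappa[G],\in,f,G>\elesub\<H_\gamma[G^*],\in,f^*,G^*>$, in particular $H_\kappa[G]\elesub H_\gamma[G^*]$. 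Re-running the continuum analysis of the first paragraph inside $V[G^*]$ (note $\gamma$ is inaccessible in $V$, so that argument applies verbatim with $\gamma,f^*$ in place of $\kappa,f$) gives $\continuum^{V[G^*]}=\gamma$ and $H_\gamma^{V[G^*]}=H_\gamma[G^*]$. Therefore $H_\kappa[G]=\Hc^W$ and $H_\gamma[G^*]=\Hc^{V[G^*]}=\Hc^{W[g*h]}$, so $\Hc^W\elesub\Hc^{W[g*h]}$, which is the required instance of $\RA(\proper)$.

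Finally, for the value of the continuum: $\RA(\proper)$ gives $\wRA(\proper)$ in $W$, and since $\continuum^W=\kappa>\omega_1$ we have $\neg\CH$, so theorem~\ref{T.wRANotCollapseBelowC} yields $\continuum^W\leq\aleph_2^W$; as $\kappa$ is a cardinal of $W$ above $\omega_1^W$, also $\kappa\geq\aleph_2^W$, whence $\continuum^W=\kappa=\aleph_2^W$. I expect the main obstacle to be the bookkeeping in the resurrection step: confirming that an arbitrary proper poset $\Q\in W$ genuinely becomes one of the lottery options at stage $\kappa$ of $\P_\gamma^*$ (matching hereditary sizes, coding $\Q$ into $H_\gamma$ up to isomorphism, and using that properness is absolute between $H_\gamma[G]$ and $W$), and cleanly managing that $\P_\kappa$ is class forcing from the standpoint of $H_\kappa$ though set forcing over $V$---precisely the situation the niceness hypothesis and the lifting lemma are designed to handle---while checking that the tail $\P^*_{(\kappa,\gamma)}$ determines a single $\Q$-name $\Rdot$ over $W$ that works uniformly for all generics $g*h$.
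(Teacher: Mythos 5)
Your proposal is correct and follows essentially the same route as the paper's proof: the same lottery iteration, the Menas function producing $\<H_\kappa,f>\elesub\<H_\gamma,f^*>$ with $\Q$ caught in the stage-$\kappa$ lottery of $\P^*$, the factorization $\P^*\cong\P*\Qdot*\Ptail$ with the tail as the resurrecting proper forcing, the lifting lemma~\ref{L.LiftgStrUplift}, and the identifications $H_\kappa[G]=\Hc^{V[G]}$ and $H_\gamma[G^*]=\Hc^{V[G^*]}$. The only real deviations are cosmetic: the paper gets $\kappa=\aleph_2$ by a direct density argument collapsing the cardinals between $\aleph_1$ and $\kappa$, whereas you derive it a posteriori from $\RA(\proper)$ via theorem~\ref{T.wRANotCollapseBelowC} (which works), and your density of real-adding stages is better justified directly from the Menas property by elementarity than from observation~\ref{O.UpliftSigma3Reflectg}.
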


\begin{proof}
Suppose that $\kappa$ is uplifting and $f\from\kappa\to\kappa$ is a function with the Menas property for $\kappa$, such as the function of theorem~\ref{T.UpliftImplyMenas}. Let $\P$ be the countable-support lottery iteration of proper forcing defined relative to $f$. That is, $\P$ is the countable-support $\kappa$-iteration, where the forcing at stage $\beta\in\dom(f)$ is the lottery sum in $V[G_\beta]$ of all proper posets in $H_{f(\beta)^+}^{V[G_\beta]}$. We defined the iteration $\P$ in $V$ relative to $f$, but as $\kappa$ is inaccessible it follows by absoluteness that $\P$ is the same as the corresponding class lottery iteration of proper posets, relative to $f$, as defined in $\<H_\kappa,f>$. Since initial stages of $\P$ completely embed into later stages, it follows that $\P$ is a definable chain of complete subposets in $\<H_\kappa,f>$. Since the lottery sum of any number of proper forcing notions is still proper, it follows that $\P$ is a countable-support iteration of proper posets and therefore is itself proper. A standard $\Delta$-system argument shows that $\P$ is $\kappa$-c.c. Suppose that $G\of\P$ is $V$-generic. A simple density argument shows that $\kappa$ becomes $\omega_2^{V[G]}$, because both $\aleph_1$ and $\kappa$ are preserved, but all cardinals of $V$ between $\aleph_1$ and $\kappa$ have plenty of opportunity to be collapsed. Similarly, $\kappa=\continuum$ in $V[G]$, since the generic filter will opt to add reals at unboundedly many stages of the forcing. Thus $\kappa=\continuum=\aleph_2$ in $V[G]$, and it remains to prove that $V[G]\satisfies\RA(\proper)$.

Suppose that $\Q$ is any proper notion of forcing in $V[G]$, and let $\Qdot$ be a name for $\Q$ that necessarily yields a proper poset. Since $f$ has the Menas property for the uplifting cardinal $\kappa$, there is an inaccessible cardinal $\gamma$  above $\kappa$ such that $\<H_\kappa, f > \elesub \< H_\gamma, f^*> $ with $f^*(\kappa)\geq |\text{trcl}(\Qdot)|$. Thus, the poset $\Q$ appears in the stage $\kappa$ lottery of the corresponding countable-support class lottery iteration $\P^*$ of proper posets, relative to $f^*$, as defined in $\<H_\gamma,f^*>$. Notice that $\P$ and $\P^*$ agree on the stages below $\kappa$, and so below a condition opting for $\Q$ at stage $\kappa$, we may factor $\P^*$ as $\P*\Qdot*\Ptail$. Let $g*h\of\Q*\Ptail$ be $V[G]$-generic. It follows that $G*g*h$ generates a $V$-generic filter $G^*\of\P^*$. Since $G$ and $G^*$ agree on the first $\kappa$ many stages, it follows by lemma~\ref{L.LiftgStrUplift} that $H_\kappa\elesub H_\gamma$ lifts to $H_\kappa[G]\elesub H_\gamma[G^*]$. Since $\P$ is $\kappa$-c.c. and $\kappa$ is regular, we may use nice names for bounded subsets of $\kappa$ to see that $H_\kappa[G]={H_\kappa}^{V[G]}$. Since $\kappa=\continuum$ in $V[G]$, it follows in summary that $H_\kappa[G]= \Hc^{V[G]}$. Arguing analogously for the poset $\P^*$ and the regular cardinal $\gamma$, we see that $H_\gamma[G^*]=H_\gamma^{V[G^*]}=\Hc^{V[G^*]}$ and the desired elementarity $\Hc^{V[G]} \elesub \Hc^{V[G][g*h]}$ follows. Lastly, $\Ptail$ is a countable-support class iteration of proper posets in $H_\gamma[G*g]=H_\gamma^{V[G*g]}$, and consequently a proper poset in $V[G*g]$ by absoluteness. This completes the proof.
\end{proof}

The method of proof in theorem~\ref{T.UpliftImplyRA(prop)} is flexible and can be applied to many classes $\Gamma$ of forcing notions, as long as lottery sums of posets in $\Gamma$ are themselves in $\Gamma$, and a suitable preservation theorem holds for iterations of posets in $\Gamma$. For example, we obtain the following:

\begin{theorem} \label{T.UpliftImplyManyRAs}
Let $\kappa$ be an uplifting cardinal and $f\from\kappa\to\kappa$ a function with the uplifting Menas property for $\kappa$. Then
\begin{enumerate}
\item the countable-support lottery iteration of axiom-A forcing, relative to $f$, forces $\RA(\text{axiom-A})$ and $\continuum=\kappa=\aleph_2$.
\item the revised-countable-support lottery iteration of semi-proper forcing, relative to $f$, forces $\RA(\semiproper)$ and $\continuum=\kappa=\aleph_2$.
\item the finite-support lottery iteration of all forcing, relative to $f$, forces $\RAall$ and $\continuum=\kappa=\aleph_1$.
\end{enumerate}
\end{theorem}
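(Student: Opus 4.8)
The plan is to run the proof of Theorem~\ref{T.UpliftImplyRA(prop)} essentially verbatim in each of the three cases, replacing ``proper'' and ``countable support'' by the relevant class $\Gamma$ and its associated support, and isolating the only two case-specific facts that the proper argument actually uses: that the lottery sum of posets from $\Gamma$ is again in $\Gamma$, and that a suitable iteration-preservation theorem holds for $\Gamma$. So I would fix an uplifting cardinal $\kappa$ and a function $f\from\kappa\to\kappa$ with the Menas property, as in Theorem~\ref{T.UpliftImplyMenas}, and let $\P$ be the indicated lottery iteration of $\Gamma$ forcing relative to $f$, which by inaccessibility of $\kappa$ is the same as the corresponding class lottery iteration of $\Gamma$ posets defined in $\<H_\kappa,f>$, a definable chain of complete subposets there. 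For axiom-A forcing, closure of $\Gamma$ under lottery sums is immediate from amalgamating the witnessing descending chains $\le_n$, and the preservation theorem is the standard fact that a countable-support iteration of axiom-A posets is axiom-A, hence proper. For semi-proper forcing, closure under lottery sums is again immediate, and the preservation theorem is Shelah's theorem that a revised-countable-support iteration of semi-proper posets is semi-proper. For the class of all forcing both facts are trivial. In each case $\P$ is $\kappa$-c.c.\ by a $\Delta$-system argument just as in Theorem~\ref{T.UpliftImplyRA(prop)}; for the finite-support case one uses the inaccessibility of $\kappa$ to get $|\P_\beta|<\kappa$ for $\beta<\kappa$, and then disjointness of the tails above the root of the $\Delta$-system gives the required compatibility.

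Granting this, the cardinal structure of $V[G]$ is read off as before. In cases~(1) and~(2), $\aleph_1$ is preserved (the forcing is proper, resp.\ semi-proper), $\kappa$ is preserved ($\kappa$-c.c.), each $V$-cardinal strictly between $\aleph_1$ and $\kappa$ is collapsed by a density argument, since the lottery offers the relevant collapses to $\aleph_1$, and reals are added cofinally, so $\kappa=\continuum=\aleph_2$ in $V[G]$ (a nice-name count using $\kappa$-c.c.\ and $\kappa^{\ltkappa}=\kappa$ gives $2^{\aleph_0}\leq\kappa$, hence equality). In case~(3) the lottery additionally offers collapses of every cardinal below $\kappa$ to $\omega$, so $\kappa=\aleph_1^{V[G]}$, while Cohen reals are still added cofinally and $|\P|=\kappa$ with $\kappa^{\aleph_0}=\kappa$ forces $\continuum=\kappa$; thus $\continuum=\kappa=\aleph_1$ in $V[G]$, as it must be since $\RAall$ implies \CH\ by Theorem~\ref{T.wRANotCollapseBelowC}. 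In every case $H_\kappa[G]=H_\kappa^{V[G]}=\Hc^{V[G]}$ by the $\kappa$-c.c.\ of $\P$ together with nice names for bounded subsets of $\kappa$.

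To verify $\RA(\Gamma)$ in $V[G]$, given $\Q\in\Gamma^{V[G]}$ I would pick a name $\Qdot$ necessarily in $\Gamma$ and, using the Menas property, an inaccessible $\gamma>\kappa$ with $\<H_\kappa,f>\elesub\<H_\gamma,f^*>$ and $f^*(\kappa)\geq|\text{trcl}(\Qdot)|$, so that $\Q$ wins the stage-$\kappa$ lottery of the analogous length-$\gamma$ iteration $\P^*$ defined in $\<H_\gamma,f^*>$. Below a condition opting for $\Q$ at stage $\kappa$, factor $\P^*=\P*\Qdot*\Ptail$, let $g*h\of\Q*\Ptail$ be $V[G]$-generic, and observe that $G*g*h$ generates a $V$-generic $G^*\of\P^*$. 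The Lifting Lemma~\ref{L.LiftgStrUplift} lifts $H_\kappa\elesub H_\gamma$ to $H_\kappa[G]\elesub H_\gamma[G^*]$, which, arguing analogously for $\P^*$ and the regular cardinal $\gamma$, is exactly $\Hc^{V[G]}\elesub\Hc^{V[G][g*h]}$. Finally $\Ptail$, computed in $H_\gamma[G*g]=H_\gamma^{V[G*g]}$, is a (revised-)countable-support, resp.\ finite-support, class iteration of $\Gamma$ posets, so by the same iteration-preservation theorem together with absoluteness it is a poset in $\Gamma^{V[G*g]}$; this yields $\forces_\Q\Ptail\in\Gamma$, so we obtain the full resurrection axiom and not merely its weak form.

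I expect the main obstacle to be case~(2): one must set up the revised-countable-support lottery iteration correctly and invoke Shelah's semi-properness preservation theorem in the right form---both for $\P$ itself and for the tail $\Ptail$ over the intermediate model $V[G*g]$---and check that the $\kappa$-c.c.\ and nice-name bookkeeping go through for revised-countable-support iterations of length an inaccessible. Cases~(1) and~(3) are routine once the respective iteration theorems (well-known for axiom-A, trivial for arbitrary forcing) are in hand, the only mild novelty being the $\Delta$-system verification of $\kappa$-c.c.\ for the finite-support iteration sketched above.
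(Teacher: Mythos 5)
Your proposal is correct and follows essentially the same route as the paper: the paper likewise reduces all three cases to the proof of Theorem~\ref{T.UpliftImplyRA(prop)}, noting only that lottery sums stay in the class and that the relevant iteration theorem holds (Koszmider's theorem for countable-support iterations of axiom-A forcing, Shelah's preservation of semi-properness under revised countable support, and triviality for arbitrary forcing), with the finite-support case collapsing everything below $\kappa$ so that $\continuum=\kappa=\aleph_1$. Your additional bookkeeping (the $\Delta$-system argument, nice names, and the treatment of $\Ptail$) matches what the paper imports wholesale from the proper case.
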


\begin{proof}
For~(1), note that the lottery sum of any number of axiom-A posets continues to have axiom-A, and Koszmider's result~\cite{Koszmider1993:CoherentFamilies} shows that a countable-support iteration of axiom-A forcing notions has itself axiom-A. It is then straightforward to follow the proof of theorem~\ref{T.UpliftImplyRA(prop)} closely to obtain statement~(1).

For~(2), note that the lottery sum of semi-proper posets is still semi-proper, and semi-properness is preserved under iterations with revised countable support, by a result due to Shelah. Replacing the countable-support lottery iteration of proper posets in the proof of theorem~\ref{T.UpliftImplyRA(prop)} by a revised-countable-support lottery iteration of semi-proper posets therefore proves statement~(2).

For statement (3), to produce a forcing extension $V[G]$ that satisfies the resurrection axiom $\RAall$, we allow every poset at each stage of the lottery iteration. Thus, following the proof of theorem~\ref{T.UpliftImplyRA(prop)} but using a finite-support lottery iteration of all posets shows that the uplifting cardinal $\kappa$ will turn into $\aleph_1^{V[G]}$, since $\kappa$ is preserved but all uncountable cardinals below $\kappa$ are collapsed, and statement~(3) follows.
\end{proof}

We would like to call attention to the fact that the poset used in statement~(3) of theorem~\ref{T.UpliftImplyManyRAs} ---the finite-support lottery iteration of all posets, relative to a function $f\from\kappa\to\kappa$ with the uplifting Menas property---has size $\kappa$, is $\kappa$-c.c., and necessarily collapses all cardinals below $\kappa$ to $\omega$. Standard arguments show that this poset is hence forcing equivalent to the \Levy\ collapse $\Coll(\omega,\ltkappa)$, and an alternative formulation of~(3) would state that the \Levy\ collapse $\Coll(\omega,\ltkappa)$ of an uplifting cardinal~$\kappa$ forces $\RAall$.

For countable ordinals $\alpha$, Shelah~\cite{Shelah:ProperForcingLectureNotes1982}, introduced the notion of an $\alpha$-proper poset as a strengthening of properness, and he showed that $\alpha$-properness is preserved under countable-support iterations. Analogously to the other examples in theorem~\ref{T.UpliftImplyManyRAs}, it follows that the countable-support lottery iteration of $\alpha$-proper posets, relative to a function $f\from\kappa\to\kappa$ with the Menas property for $\kappa$,  forces $\RA(\alpha\text{-proper})$ with $\continuum=\kappa=\aleph_2$.

The lottery iteration method does not seem to work directly in the case of c.c.c.~forcing, simply because an uncountable lottery sum of c.c.c.~forcing is no longer c.c.c. If one were to try to perform a finite-support lottery preparation of c.c.c.~forcing, the iteration itself would no longer be c.c.c., and indeed it would collapse $\omega_1$. The argument would break down in the step where, in attempt to verify $\RA(\ccc)$, we seek to use the tail forcing $\Ptail$ as the resurrection $\Rdot$ forcing, since this would not be c.c.c.~as required.

Nevertheless, we can carry out a modified argument, using the method of Laver functions rather than lottery sums. Suppose that $\kappa$ is a cardinal, $\ell\from\kappa\to V_\kappa$ is a function, and $\Gamma$ is a class of forcing notions. We say that a forcing notion $\P$ is a \emph{Laver-style $\kappa$-iteration of\/ $\Gamma$ forcing}, defined relative to $\ell$, if $\P$ is a forcing iteration of length $\kappa$ (with some specified support) that forces at stage $\beta\in\dom(\ell)$ with $\ell(\beta)$, provided that this is a $\P_\beta$-name for forcing that is forced to be in $\Gamma^{V[G_\beta]}$; the forcing is trivial at stages $\beta\notin\dom(\ell)$. Note that any uplifting cardinal is uplifting in $L$, and by theorem~\ref{T.VariousUpliftingLaverFcns} has an uplifting Laver function there, as in the hypothesis in the following theorem.

\begin{theorem}\label{T.UpliftGivesRAccc}
 If $\kappa$ is uplifting and $\ell\from \kappa \to H_\kappa$ is an uplifting Laver function, then the finite-support Laver-style $\kappa$-iteration of c.c.c.~forcing defined relative to $\ell$ forces $\RA(\ccc)$ with $\continuum=\kappa$.
\end{theorem}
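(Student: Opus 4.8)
The plan is to mirror the proof of theorem~\ref{T.UpliftImplyRA(prop)}, using a finite-support Laver-style iteration in place of the countable-support lottery iteration of proper forcing, and using the guessing property of the uplifting Laver function $\ell$ in place of the Menas property of a function. Let $\P$ be the finite-support Laver-style $\kappa$-iteration of c.c.c.~forcing defined relative to $\ell$, and suppose $G\of\P$ is $V$-generic. Since a finite-support iteration of c.c.c.~posets is c.c.c., the poset $\P$ is c.c.c.; and since $\ell\from\kappa\to H_\kappa$ and $\kappa$ is inaccessible, a standard bookkeeping argument shows that $\P$ has size $\kappa$, that each $\P_\beta$ for $\beta<\kappa$ is a set in $H_\kappa$, and that $\P$ is a definable chain of complete subposets in $\<H_\kappa,\ell>$; by absoluteness $\P$ coincides with the class Laver-style iteration of c.c.c.~forcing defined from $\ell$ inside $\<H_\kappa,\ell>$. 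It remains to compute $\continuum$ in $V[G]$ and to verify $\RA(\ccc)$.

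First I would show $\continuum=\kappa$ in $V[G]$. The inequality $\continuum^{V[G]}\le\kappa$ is immediate, since $\P$ is c.c.c.~of size $\kappa$ with $\kappa$ inaccessible, so there are at most $\kappa$ many nice $\P$-names for subsets of $\omega$. For the reverse inequality it suffices to see that $\P$ forces with a real-adding poset at cofinally many stages below $\kappa$, that is, that for every $\delta<\kappa$ there is $\beta\in[\delta,\kappa)$ for which $\ell(\beta)$ is a $\P_\beta$-name forced to be a c.c.c.~poset that adds a real. This is where the guessing property of $\ell$ enters. Applying it to the check-name of the Cohen poset $2^{\ltomega}$, we obtain an inaccessible $\gamma_0>\kappa$ and $\ell_0^*\from\gamma_0\to H_{\gamma_0}$ with $\<H_\kappa,\ell>\elesub\<H_{\gamma_0},\ell_0^*>$ and $\ell_0^*(\kappa)$ equal to that check-name. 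The Laver-style iteration determined by $\ell_0^*$ in $\<H_{\gamma_0},\ell_0^*>$ then forces with $2^{\ltomega}$ at stage $\kappa$ (the check-name is trivially forced to be the countable poset $2^{\ltomega}$, which is c.c.c.~and adds a Cohen real), so in that model, for every $\delta$ there is $\beta\ge\delta$---namely $\beta=\kappa$---with $\ell_0^*(\beta)$ a name for the $\beta^\th$ iterand forced to be c.c.c.~and to add a real. This assertion, phrased using the iteration as the class definable from the Laver predicate, has only the parameter $\delta$, and for $\delta<\kappa$ it therefore reflects down into $\<H_\kappa,\ell>$, yielding the required cofinally-many stages; one must be a little careful here, since $\P$ itself is a proper class over $\<H_\kappa,\ell>$ whereas each $\P_\beta$, $\beta<\kappa$, is a set. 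Consequently reals are added at cofinally many stages, so $\continuum^{V[G]}\ge\kappa$; and since $\P$ is $\kappa$-c.c.~and $\kappa$ is regular, nice names for bounded subsets of $\kappa$ give $H_\kappa[G]={H_\kappa}^{V[G]}=\Hc^{V[G]}$.

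Next I would verify $\RA(\ccc)$ in $V[G]$. Given a c.c.c.~poset $\Q\in V[G]$, fix a $\P$-name $\Qdot$ with $\Qdot_G=\Q$ and $\one_\P\forces_\P\Qdot\in\ccc$, for instance a nice name for $\Q$ modified to become trivial off the condition that its evaluation is c.c.c. Applying the guessing property of $\ell$ to the set $\Qdot$, fix an inaccessible $\gamma>|\text{trcl}(\Qdot)|$ and $\ell^*\from\gamma\to H_\gamma$ with $\<H_\kappa,\ell>\elesub\<H_\gamma,\ell^*>$ and $\ell^*(\kappa)=\Qdot$, and let $\P^*$ be the class Laver-style iteration of c.c.c.~forcing defined from $\ell^*$ in $\<H_\gamma,\ell^*>$. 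By elementarity and absoluteness $\P^*$ agrees with $\P$ on the stages below $\kappa$, and since $H_\gamma$ correctly verifies $\one_\P\forces_\P\Qdot\in\ccc$, the iteration $\P^*$ forces with $\Qdot$ at stage $\kappa$; thus $\P^*\cong\P*\Qdot*\Ptail$, where $\Ptail$, the tail of the iteration, is itself a finite-support iteration of c.c.c.~posets and hence c.c.c.~in $V[G*g]$. Let $g\of\Q$ be $V[G]$-generic and $h\of\Ptail$ be $V[G*g]$-generic. Then $G*g*h$ generates a $V$-generic filter $G^*\of\P^*$ with $G=G^*\intersect\P$, and since $\P$ is a definable chain of complete subposets over $\<H_\kappa,\ell>$ whose niceness is preserved to $\<H_\gamma,\ell^*>$, the lifting lemma~\ref{L.LiftgStrUplift} gives $\<H_\kappa[G],\in,\ell,G>\elesub\<H_\gamma[G^*],\in,\ell^*,G^*>$, in particular $H_\kappa[G]\elesub H_\gamma[G^*]$. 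Running the argument of the previous paragraph for $\gamma$ and $\ell^*$---the cofinally-many-real-adding-stages assertion is first-order over $\<H_\kappa,\ell>$ and so transfers up to $\<H_\gamma,\ell^*>$---we obtain $\continuum^{V[G^*]}=\gamma$ and hence $H_\gamma[G^*]={H_\gamma}^{V[G^*]}=\Hc^{V[G^*]}$. Since $V[G^*]=V[G][g*h]$ this reads $\Hc^{V[G]}\elesub\Hc^{V[G][g*h]}$; taking $\Rdot$ to be the canonical $\Q$-name over $V[G]$ for $\Ptail$, which is forced to be c.c.c., we have produced the second-step forcing witnessing $\RA(\ccc)$ in $V[G]$.

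The step I expect to be the main obstacle is the computation $\continuum=\kappa$. In a lottery iteration Cohen forcing is always present in the pot and reals are added at every stage by a trivial density argument, so the analogue in theorem~\ref{T.UpliftImplyRA(prop)} is immediate; here, by contrast, one must extract from the guessing property of $\ell$ the fact that $\ell$ anticipates a real-adding c.c.c.~poset cofinally often below $\kappa$, via a reflection argument that is delicate precisely because $\P$ is a class, not a set, over $\<H_\kappa,\ell>$. A secondary, routine point is the class-forcing bookkeeping needed to invoke lemma~\ref{L.LiftgStrUplift}---that $\<H_\kappa,\in,\ell>$ and $\<H_\gamma,\in,\ell^*>$ model $\ZFC$ in the language with the Laver predicate, that $\P$ and $\P^*$ are definable chains of complete subposets, and that niceness is preserved---all of which follows from the general remarks preceding that lemma together with the inaccessibility of $\kappa$ and $\gamma$.
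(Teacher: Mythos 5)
Your proposal is correct and follows essentially the same route as the paper's proof: guess the name $\Qdot$ with the uplifting Laver function, form the corresponding iteration $\P^*$ in $\<H_\gamma,\ell^*>$, factor it as $\P*\Qdot*\Ptail$, lift $H_\kappa\elesub H_\gamma$ to $H_\kappa[G]\elesub H_\gamma[G^*]$ via lemma~\ref{L.LiftgStrUplift}, and use the c.c.c.~tail as the resurrecting forcing. The only difference is that you spell out the reflection argument showing $\ell$ anticipates real-adding c.c.c.~posets cofinally often (hence $\continuum^{V[G]}=\kappa$), a point the paper's proof leaves implicit, and your treatment of it is correct.
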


\begin{proof}
Suppose that $\kappa$ is uplifting and $\ell\from\kappa\to H_\kappa$ is an uplifting Laver function for $\kappa$. Let $\P$ be the corresponding finite-support Laver-style $\kappa$-iteration of c.c.c.~forcing defined relative to $\ell$. Thus, $\P$ forces at stage $\beta$ with $\ell(\beta)$, provided that $\ell(\beta)$ is a $\P_\beta$-name that necessarily yields a c.c.c.~poset in $V[G_\beta]$. Suppose that $G\of\P$ is $V$-generic, and consider the model $V[G]$. If $\Q$ is some c.c.c.~forcing in $V[G]$, let $\Qdot\in V$ be a $\P$-name for $\Q$, forced to be c.c.c. By the Laver function property, there is an inaccessible cardinal $\gamma$ for which $\<H_\kappa,\ell>\elesub \<H_\gamma,\ell^*>$ and $\ell^*(\kappa)=\Qdot$. Note that the definition of $\P$ works inside $\<H_\kappa,\ell>$, and so we may let $\P^*$ be the corresponding iteration as defined in $\<H_\gamma,\ell^*>$. The forcing notions $\P$ and $\P^*$ agree on the stages below $\kappa$, and since $\ell^*(\kappa)=\Qdot$ is a $\P$-name forced to be c.c.c.,~ the poset $\P^*$ forces at stage $\kappa$ with $\Q$, and $\P^*$ factors as $\P*\Qdot*\Ptail$, where $\Ptail$ is the forcing after stage $\kappa$. Suppose that $g*h\of\Q*\Ptail$ is $V[G]$-generic, so that $G*g*h$ is $V$-generic for $\P^*$. Thus, by lemma~\ref{L.LiftgStrUplift} we may lift $H_\kappa\elesub H_\gamma$ to $H_\kappa[G]\elesub H_\gamma[G][g*h]$. Since $\P$ is c.c.c.~and $\kappa=\continuum$ in $V[G]$, it follows that $H_\kappa[G]=\Hc^{V[G]}$; an analogous argument for $\P^*$ shows that $H_\gamma[G][g*h]=\Hc^{V[G][g][h]}$ and consequently $\Hc^{V[G]}\elesub\Hc^{V[G][g*h]}$. Since $\Ptail$ is a finite support iteration of c.c.c.~forcing in $V[G][g]$, it is c.c.c., and so we have established $\RA(\ccc)$ in $V[G]$.
\end{proof}

The method of proof in theorem~\ref{T.UpliftGivesRAccc} is general, and it applies, for example, to the classes $\Gamma$ considered in theorems~\ref{T.UpliftImplyRA(prop)} and~\ref{T.UpliftImplyManyRAs}, providing alternative proofs of those theorems. For instance, if  $\kappa$ is uplifting and $\LDuplift_\kappa$ holds at $\kappa$, then the countable-support Laver-style $\kappa$-iteration of proper forcing, defined relative to a corresponding Laver function, forces $\RA(\proper)$ with $\continuum=\kappa=\aleph_2$.

In summary, the previous theorems establish our main result:

\begin{maintheorem}\label{T.Maintheorem}
The following theories are equiconsistent over $\ZFC$:
\begin{enumerate}
\item There is an uplifting cardinal.
\item $\RAall$
\item $\RA(\ccc)$
\item $\RA(\semiproper)+\neg\CH$
\item $\RA(\proper)+\neg\CH$
\item for some countable ordinal $\alpha$, $\;\RA(\alpha\text{-proper})+\neg\CH$
\item $\RA(\text{axiom-A})+\neg\CH$
\item $\wRA(\semiproper)+\neg\CH$
\item $\wRA(\proper)+\neg\CH$
\item for some countable ordinal $\alpha$,  $\;\wRA(\alpha\text{-proper})+\neg\CH$
\item $\wRA(\text{axiom-A})+\neg\CH$
\item $\wRA(\text{countably closed})+\neg\CH$
\end{enumerate}
\end{maintheorem}

Note that the axiom $\RA(\text{preserving stationary subsets of }\omega_1)+\neg\CH$ is not mentioned in the theorem, and neither is its weakening to the axiom $\wRA(\text{preserving stationary subsets of }\omega_1)+\neg\CH$. The reason is that each of these axioms implies $\BMM$ by theorem~\ref{T.wRAimpliesBFA_kappa}, but $\BMM$ has much higher consistency strength than an uplifting cardinal, as it implies the existence of an inner model with a strong cardinal, by a result due to Schindler~\cite{Schindler2006:BMMandStrong}. We prove the relative consistency of $\RA(\text{preserving stationary subsets of }\omega_1)+\neg\CH$ in the next section in theorem~\ref{T.RAproper+PFAVariants}.

\section{Resurrection axioms and $\PFA$, $\SPFA$, $\MM$, and their fragments}\label{S.RAandPFA}

The methods of the previous section lend themselves to be combined with standard techniques to produce models of the proper forcing axiom $\PFA$ and the semi-proper forcing axiom $\SPFA$, starting with a supercompact cardinal, as well as with techniques used by Tadatoshi Miyamoto in~\cite{Miyamoto1998:WeakSegmentsOfPFA} and also by the authors of this article in~\cite{HamkinsJohnstone2009:PFA(aleph_2-preserving)}  to produce models of certain fragments of $\PFA$ and $\SPFA$, starting with a strongly unfoldable cardinal. We will also obtain instances of resurrection axioms in models of Martin's Maximum \MM, or of the axiom-A forcing axiom $\text{AAFA}$. 

Suppose that $\kappa$ is a supercompact cardinal. By Laver's result~\cite{Laver78}, there is a supercompactness Laver function $\ell\from\kappa\to H_\kappa$. Laver's function does not seem in general to be definable in $H_\kappa$, although one can find such a definable Laver function when $H_\kappa$ has a definable well-ordering. In the general case, we shall consider instead the \emph{failure-of-supercompactness} function $f\from\kappa\to\ORD$, which maps every non-supercompact cardinal $\gamma<\kappa$ to the least $\lambda$ such that $\gamma$ is not $\lambda$-supercompact. It is easy to see that $f$ is definable in $H_\kappa$ and that $f$ is a supercompactness {\df Menas} function, meaning that for every ordinal $\theta$ there is an elementary embedding $j:V\to M$ with critical point $\kappa$ and $M^\theta\of M$ and $j(\kappa)\geq\theta$ such that $j(f)(\kappa)\geq \theta$.

Suppose now that $\kappa$ is a supercompact cardinal that is also uplifting. Since this implies the existence of many large cardinals above $\kappa$, the overall consistency strength of this hypothesis is strictly above that of a supercompact cardinal, although the proof of theorem \ref{T.ConBoundUplift} shows that it is bounded above by a stationary set of supercompact cardinals. Let $f_1\from\kappa\to\kappa$ be the failure-of-supercompactness function as discussed above, and $f_2\from\kappa\to\kappa$ be any uplifting Menas function, such as the failure-of-upliftingness function as in theorem~\ref{T.UpliftImplyMenas}, and let $f=\max(f_1,f_2)$. Since $f_1$ is definable in $H_\kappa$, it follows that $f\from\kappa\to\kappa$ is a Menas function both for upliftingness and for supercompactness, exactly as is needed for the next theorem.

\begin{theorem}\label{T.RAproper+PFAVariants}
 Suppose that $\kappa$ is both supercompact and uplifting and that $f\from\kappa\to\kappa$ is a function with the Menas property for both supercompactness and upliftingness. Then
 \begin{enumerate}
 \item the countable-support lottery iteration of proper forcing, relative to $f$, forces $\RA(\proper)+\PFA$.
 \item the countable-support lottery iteration of axiom-A forcing, relative to $f$, forces $\RA(\text{axiom-A})+\text{\rm AAFA}$.
\item the revised-countable-support lottery iteration of semi-proper forcing, relative to $f$, forces $\RA(\text{preserving stationary subsets of }\omega_1)+ \RA(\semiproper)+\MM.$
 \end{enumerate}
\end{theorem}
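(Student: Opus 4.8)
The plan is to prove all three items by adapting the proof of theorem~\ref{T.UpliftImplyRA(prop)} so that the lottery iteration simultaneously witnesses a resurrection axiom (using the uplifting-Menas part of $f$, exactly as before) and forces one of the classical forcing axioms $\PFA$, $\text{AAFA}$, or $\MM$ (using the supercompactness-Menas part of $f$, via the standard Baumgartner-style argument). The point is that a single iteration does double duty: the Menas property of $f$ for \emph{upliftingness} gives us, for any proper (resp.\ axiom-A, semi-proper) poset $\Q$ named in $V[G]$, an inaccessible $\gamma>\kappa$ with $\<H_\kappa,f>\elesub\<H_\gamma,f^*>$ and $f^*(\kappa)\geq|\text{trcl}(\Qdot)|$, so that $\Q$ appears in the stage-$\kappa$ lottery of the corresponding iteration $\P^*$ over $\<H_\gamma,f^*>$; then lemma~\ref{L.LiftgStrUplift} lifts $H_\kappa\elesub H_\gamma$ to $H_\kappa[G]\elesub H_\gamma[G^*]$, which is exactly $\Hc^{V[G]}\elesub\Hc^{V[G][g*h]}$, and the tail $\Ptail$ is of the right type by absoluteness. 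Meanwhile, the Menas property of $f$ for \emph{supercompactness} gives, for any name $\Qdot$ for a poset of the relevant type and any collection of $\aleph_1$-many dense subsets in $V[G]$, an embedding $j:V\to M$ with critical point $\kappa$, $M$ closed under $\theta$-sequences for $\theta$ large, and $j(f)(\kappa)\geq\theta$, so that $\Q$ appears in the stage-$\kappa$ lottery of $j(\P)$; standard reflection then produces a master condition and a filter meeting all the given dense sets, establishing the forcing axiom in $V[G]$.

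\medskip

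\textbf{The steps}, in order, would be: (i) verify that the iteration $\P$ (countable support for proper and axiom-A, revised countable support for semi-proper) is of the correct type and is $\kappa$-c.c., that $\kappa=\continuum=\aleph_2$ in $V[G]$, and that $H_\kappa[G]=\Hc^{V[G]}$---this is verbatim as in theorem~\ref{T.UpliftImplyRA(prop)} and theorem~\ref{T.UpliftImplyManyRAs}; (ii) run the resurrection argument using the uplifting-Menas part of $f$, obtaining $\Hc^{V[G]}\elesub\Hc^{V[G][g*h]}$ with $\Ptail$ of the right type, so that $\RA(\proper)$ (resp.\ $\RA(\text{axiom-A})$, $\RA(\semiproper)$ and also $\RA(\text{preserving stationary subsets of }\omega_1)$, since the revised-countable-support semi-proper iteration collapses nothing below $\kappa$ badly and the tail is semi-proper hence stationary-preserving) holds in $V[G]$; (iii) run the forcing-axiom argument using the supercompactness-Menas part of $f$: given a poset $\Q$ of the relevant type in $V[G]$ and $\aleph_1$-many dense sets, pick a name $\Qdot$, choose $j:V\to M$ with $j(f)(\kappa)\geq\theta$ for $\theta$ above the size of everything relevant and $M^\theta\of M$, observe that $j(\P)$ factors below a condition opting for $\Q$ at stage $\kappa$ as $\P*\Qdot*\Ptail_j$, force below $\kappa$ with $G$ and at stage $\kappa$ with a generic $g$ for $\Q$ obtained from the dense sets, build a master condition in $\Ptail_j$ using the $\theta$-closure of $M$ (standard since $\Ptail_j$ is $\leqkappa$-closed in the appropriate sense or at least sufficiently directed after the first $\kappa$ stages), and lift $j$ to conclude that $V[G]$ has the required filter; (iv) note that for item~(3) the semi-proper iteration with the supercompact Laver/Menas machinery yields $\MM$ by the Foreman--Magidor--Shelah theorem, and $\MM$ implies $\continuum=\aleph_2$, consistent with the $\RA$ instances established in step~(ii).

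\medskip

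\textbf{The main obstacle} I anticipate is the delicate interaction in step~(iii) between the two roles of $f$: for the forcing-axiom side one needs the stage-$\kappa$ lottery of $j(\P)$ to actually \emph{contain} the specified poset $\Q$ together with a generic meeting the given dense sets, which requires $j(f)(\kappa)$ to dominate $|\text{trcl}(\Qdot)|$ \emph{and} the elementarity $j$ to carry the iteration definition correctly---this is exactly why we take $f=\max(f_1,f_2)$ with $f_1$ definable in $H_\kappa$, so that $j(f)(\kappa)\geq j(f_1)(\kappa)$ and also $\geq j(f_2)(\kappa)$, but one must check that the lottery iteration's definition is absolute enough that $j(\P)$ really is the iteration-as-defined-in-$M$ and that it factors as claimed. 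A secondary subtlety, for item~(2), is confirming that a countable-support iteration of axiom-A forcing admits a Baumgartner-type consistency proof for $\text{AAFA}=\BFA_{\omega_1}(\text{axiom-A})$ upgraded to the full forcing axiom $\AAFA$ via supercompactness---this rests on Koszmider's preservation theorem~\cite{Koszmider1993:CoherentFamilies} plus the observation that the lottery sum of axiom-A posets is axiom-A, both of which are cited earlier, so the argument should go through by the same template. Beyond these points, the proof is a routine (if intricate) amalgamation of the two techniques, and I would present it by first giving the proper-forcing case in detail and then indicating the modifications for axiom-A and semi-proper forcing.
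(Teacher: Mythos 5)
Your overall strategy is essentially the paper's: the resurrection instances come from the uplifting-Menas part of $f$ exactly as in theorems~\ref{T.UpliftImplyRA(prop)} and~\ref{T.UpliftImplyManyRAs}, while $\PFA$, $\text{AAFA}$, and $\SPFA$ come from the supercompactness-Menas part of $f$ via the Baumgartner-style master-condition argument run on the lottery iteration (which the paper simply cites). So items (1) and (2), and the $\SPFA$/$\MM$ portion of (3), are fine.

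The genuine gap is your justification of $\RA(\text{preserving stationary subsets of }\omega_1)$ in item (3). You claim it already in step (ii), on the grounds that the tail $\Ptail$ is semi-proper and hence stationary-preserving. But that addresses only half of what the axiom demands: $\RA(\text{preserving stationary subsets of }\omega_1)$ quantifies over \emph{every} stationary-preserving poset $\Q$ in $V[G]$, and such a $\Q$ need not be semi-proper a priori, so it need not appear in the stage-$\kappa$ lottery of the iteration $\P^*$ defined in $\<H_\gamma,f^*>$, which admits only semi-proper posets at its stages; the resurrection argument of step (ii) therefore simply does not apply to it. (Recall also from the introduction that $\RA(\Gamma_2)$ need not imply $\RA(\Gamma_1)$ when $\Gamma_1\subseteq\Gamma_2$, so $\RA(\semiproper)$ does not by itself yield the axiom for the larger class.) The missing ingredient is Shelah's theorem that $\SPFA$ implies every poset preserving stationary subsets of $\omega_1$ is semi-proper: since the extension satisfies $\SPFA$ (equivalently $\MM$), the two classes coincide there, so any stationary-preserving $\Q\in V[G]$ is semi-proper, the resurrection argument applies to it, and the resurrecting tail, being semi-proper, is stationary-preserving in $V[G][g]$ as required. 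Your step (iv) does establish $\MM$, so the argument is repairable, but the logical order matters: $\RA(\text{preserving stationary subsets of }\omega_1)$ must be derived \emph{after} $\SPFA$/$\MM$ is in hand, not as part of step (ii).
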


\begin{proof} For (1), the resurrection axiom $\RA(\proper)$ holds by theorem~\ref{T.UpliftImplyRA(prop)}, and the verification of $\PFA$ is essentially the Baumgartner argument, but using the lottery iteration in place of his Laver-style iteration (for a proof see~theorem~12 in \cite{HamkinsJohnstone2009:PFA(aleph_2-preserving)}). The proof of statement~(2) is essentially the same, using theorem~\ref{T.UpliftImplyManyRAs}. To prove statement (3), note that the forcing extension satisfies both $\SPFA$ and $\RA(\semiproper)$, by the analogous argument of (1) applied to the semi-proper case. Since Shelah~\cite{Shelah1987:SPFAimpliesMM} showed that $\SPFA$ implies that every poset which preserves stationary subsets of $\omega_1$ is semi-proper, it follows that $\MM$ if and only if $\SPFA$ and consequently that $\RA(\text{preserving stationary subsets of }\omega_1)$ holds in the forcing extension, as desired.
\end{proof}

The same method shows for any $\alpha<\omega_1$ that the countable-support lottery iteration of $\alpha$-proper forcing, relative to a function $f\from\kappa\to\kappa$ with the Menas property for both supercompactness and upliftingness, forces $\RA(\alpha\text{-proper})$ and the forcing axiom $\FA(\alpha\text{-proper})$.

Strongly unfoldable cardinals were introduced by Villaveces~\cite{Villaveces1998:ChainsOfEndElementaryExtensionsOfModelsOfSetTheory} to exhibit a miniature form of strongness, and they were introduced independently by Miyamoto in~\cite{Miyamoto1998:WeakSegmentsOfPFA} as the \emph{$H_{\kappa^{+}}$-reflecting} cardinals by an equivalent characterization exhibiting a miniature form of supercompactness (see also \cite{Hamkins2001:UnfoldableCardinals,DzamonjaHamkins2006:DiamondCanFail}). They lie relatively low in the large cardinal hierarchy, somewhat above the weakly compact cardinals; their consistency strength is bounded below by the totally indescribable cardinals and above by the subtle cardinals, and they relativize to $L$. One of the main results in \cite{CodyGitikHamkinsSchanker:LeastWeaklyCompact} shows that the least weakly compact cardinal can be unfoldable. For a detailed account of strongly unfoldable cardinals and their indestructibility properties, we may refer the reader to our article~\cite{HamkinsJohnstone2010:IndestructibleStrongUnfoldability}, and for a quick review of these cardinals to~\cite{HamkinsJohnstone2009:PFA(aleph_2-preserving)}. There, we used a strongly unfoldable cardinal $\kappa$ and a countable-support lottery iteration of proper forcing, which we called the {\df $\PFA$ lottery preparation}, to establish the relative consistency of several fragments of $\PFA$ and $\SPFA$. Just as with supercompact cardinals, the \emph{failure-of-strong-unfoldability} function $f\from\kappa\to\kappa$ for a strongly unfoldable cardinal $\kappa$ has the strong-unfoldability Menas property for $\kappa$, and it is definable in $H_\kappa$ (see details in \cite{HamkinsJohnstone2009:PFA(aleph_2-preserving)}).

Following~\cite{HamkinsJohnstone2009:PFA(aleph_2-preserving)}, let us review the relevant concepts used in theorem~\ref{T.PFACoveringFragmentsWithRA}. If $\Gamma$ is a class of posets, than the forcing axiom $\PFA(\Gamma)$ is the assertion that for any proper poset $\Q\in\Gamma$ and every collection $\mathcal{A}$ of at most $\aleph_1$ many maximal antichains in $\Q$, there is a filter on $\Q$ meeting each antichain in $\mathcal{A}$.  If $\delta$ is a cardinal, then the forcing axiom $\PFA_\delta$ is the assertion that for any proper complete Boolean algebra $\B$ and any collection $\mathcal{D}$ of at most $\aleph_1$ many maximal antichains in $\B\minus\{0\}$, each antichain of size at most $\delta$, there is a filter on $\B$ meeting each antichain in $\mathcal{A}$. A forcing notion $\Q$ is \hbox{\emph{$\delta$-preserving}} if forcing with $\Q$ does not collapse $\delta$ as a cardinal. A forcing notion $\Q$ is \emph{$\delta$-covering} if whenever $G\of \Q$ is $V$-generic and $A\in V[G]$ is a set of ordinals with $|A|^{V[G]}<\delta$, then there is a cover $B\in V$ such that $A\of B$ and $|B|^V<\delta$. Note that for any cardinal $\delta$, every $\delta$-covering forcing notion is necessarily $\delta$-preserving.

\begin{theorem}\label{T.PFACoveringFragmentsWithRA}
 Suppose that $\kappa$ is strongly unfoldable and let $\P$ be the countable-support lottery iteration of proper posets, relative to a function $f\from\kappa\to\kappa$ with the Menas property for strong unfoldability. Then:
 \begin{enumerate}
 \item If $\kappa$ is uplifting and $f$ has the uplifting Menas property, then $\P$ forces $\RA(\proper)+\PFA(\aleph_2\text{-covering})+\PFA(\aleph_3\text{-covering})+\PFA_{\continuum}$ and $\continuum=\kappa=\aleph_2$. If $0^\sharp$ does not exist, then $\P$ forces the additional axioms $\PFA(\aleph_2\text{-preserving})$ and $\PFA(\aleph_3\text{-preserving})$.
\item If $\kappa$ is not uplifting in $L$, then $\P$ forces $\neg\wRA(\text{countably closed})+\PFA(\aleph_2\text{-covering})+\PFA(\aleph_3\text{-covering})+\PFA_{\continuum}$ with $\continuum=\kappa=\aleph_2$. If $0^\sharp$ does not exist, then $\P$ forces the additional axioms $\PFA(\aleph_2\text{-preserving})$ and $\PFA(\aleph_3\text{-preserving})$.
 \end{enumerate}
\end{theorem}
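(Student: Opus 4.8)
The plan is to derive this theorem by fusing two arguments that each concern the \emph{same} iteration $\P$: the resurrection argument of theorem~\ref{T.UpliftImplyRA(prop)}, which accounts for $\RA(\proper)$, and the analysis of the $\PFA$ lottery preparation from a strongly unfoldable cardinal carried out in \cite{HamkinsJohnstone2009:PFA(aleph_2-preserving)} (see also \cite{Miyamoto1998:WeakSegmentsOfPFA}), which accounts for the $\PFA$ fragments. These two arguments coexist on one iteration because the definition of $\P$---the countable-support lottery iteration of proper posets relative to $f$---is robust: whether it is read toward an uplifting target $\langle H_\kappa,f\rangle\elesub\langle H_\gamma,f^*\rangle$ or toward a strongly unfoldable embedding $j$ with critical point $\kappa$, the iteration $\P$ agrees below $\kappa$ with its analogue ($\P^*$, respectively $j(\P)$) defined in the target, and the stage-$\kappa$ lottery of that analogue contains any prescribed proper poset once the relevant Menas property is invoked to make $f^*(\kappa)$, respectively $j(f)(\kappa)$, large enough. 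Taking $f$ larger than a bare strong-unfoldability Menas function only enlarges the lotteries and hence harms neither the $\kappa$-chain condition (since $f(\beta)<\kappa$ and $\kappa$ is inaccessible) nor properness; and a single $f$ carrying both Menas properties exists by the same device used in the discussion preceding theorem~\ref{T.RAproper+PFAVariants}, taking $f=\max(f_1,f_2)$ with $f_1$ the failure-of-strong-unfoldability function (definable in $H_\kappa$) and $f_2$ the failure-of-upliftingness function of theorem~\ref{T.UpliftImplyMenas}.

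For statement~(1), assume also that $\kappa$ is uplifting and that $f$ has the uplifting Menas property, and fix a $V$-generic $G\of\P$. The proof of theorem~\ref{T.UpliftImplyRA(prop)} uses only these hypotheses on $\kappa$ and $f$ together with the fact that $\P$ is the countable-support lottery iteration of proper forcing relative to $f$, so it applies directly to give $V[G]\models\RA(\proper)$ with $\continuum=\kappa=\aleph_2$. Since $\P$ is precisely the $\PFA$ lottery preparation of \cite{HamkinsJohnstone2009:PFA(aleph_2-preserving)}, the analysis there---which uses only that $\kappa$ is strongly unfoldable and that $f$ has the strong-unfoldability Menas property---shows that $V[G]\models\PFA(\aleph_2\text{-covering})+\PFA(\aleph_3\text{-covering})+\PFA_{\continuum}$. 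Both conclusions hold of the single model $V[G]$. Finally, if $0^\sharp$ does not exist in $V$ then it does not exist in $V[G]$ either, since set forcing cannot create it, and so by the Jensen covering lemma over $L$, in $V[G]$ every proper $\aleph_2$-preserving poset is $\aleph_2$-covering and likewise for $\aleph_3$---just as argued in \cite{HamkinsJohnstone2009:PFA(aleph_2-preserving)}---whence $\PFA(\aleph_2\text{-covering})$ and $\PFA(\aleph_3\text{-covering})$ upgrade to $\PFA(\aleph_2\text{-preserving})$ and $\PFA(\aleph_3\text{-preserving})$.

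For statement~(2), assume instead only that $\kappa$ is not uplifting in $L$, and again fix a $V$-generic $G\of\P$. Exactly as above---using only that $\kappa$ is strongly unfoldable and that $f$ has the strong-unfoldability Menas property---the iteration $\P$ forces $\PFA(\aleph_2\text{-covering})+\PFA(\aleph_3\text{-covering})+\PFA_{\continuum}$, with $\continuum=\kappa=\aleph_2$ by the standard analysis of the lottery iteration (which needs only that $\kappa$ is inaccessible), and under $\neg 0^\sharp$ the covering fragments upgrade to the preserving fragments as in statement~(1). It remains to see that $V[G]\models\neg\wRA(\text{countably closed})$. Suppose toward a contradiction that $V[G]\models\wRA(\text{countably closed})$. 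Since $\continuum^{V[G]}=\kappa=\aleph_2^{V[G]}$ we have $V[G]\models\neg\CH$, so theorem~\ref{T.RAimplyCUplift}(3) applied in $V[G]$ yields that $\continuum^{V[G]}=\kappa$ is uplifting in $L^{V[G]}=L$, contradicting the hypothesis.

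The main obstacle I anticipate---and really the only point requiring care---is organizational rather than technical: one must verify that the uplifting-style lifting (via lemma~\ref{L.LiftgStrUplift} and the uplifting Menas property of $f$) and the strongly unfoldable master-condition argument for the $\PFA$ fragments are genuinely compatible on the single iteration $\P$, rather than demanding incompatible choices of $f$ or of the stage-$\kappa$ lottery. They are compatible precisely because each argument reads off its target poset at the stage-$\kappa$ lottery of a \emph{different} structure---the uplifting target $\langle H_\gamma,f^*\rangle$ for resurrection, versus $j(\P)$ for $\PFA$---and each invokes only one of the two Menas properties of $f$, which the other argument leaves untouched; so no essentially new forcing-theoretic difficulty arises beyond what is already handled in theorem~\ref{T.UpliftImplyRA(prop)} and in \cite{HamkinsJohnstone2009:PFA(aleph_2-preserving)}.
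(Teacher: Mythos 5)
Your proposal is correct and follows essentially the same route as the paper: the $\PFA$ fragments are quoted directly from the analysis of the lottery preparation in \cite{HamkinsJohnstone2009:PFA(aleph_2-preserving)} (including the $\neg 0^\sharp$ upgrade from covering to preserving), $\RA(\proper)$ with $\continuum=\kappa=\aleph_2$ in statement~(1) comes from theorem~\ref{T.UpliftImplyRA(prop)}, and $\neg\wRA(\text{countably closed})$ in statement~(2) is obtained by the same contradiction via theorem~\ref{T.RAimplyCUplift}, using $\neg\CH$ in the extension and the absoluteness of $L$. The extra remarks about combining the two Menas properties and enlarging the lotteries are harmless but not needed, since the theorem's hypotheses already supply the required function $f$.
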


\begin{proof}
The verification of all relevant fragments of $\PFA$ in statements~(1) and~(2) is exactly what is proved in theorems~3,~4 and~6 of~\cite{HamkinsJohnstone2009:PFA(aleph_2-preserving)}. The resurrection axiom $\RA(\proper)$ with $\continuum=\kappa=\aleph_2$ holds in statement~(1) by theorem~\ref{T.UpliftImplyRA(prop)}, but $\wRA(\text{countably closed})$ must fail in statement~(2), since otherwise $\kappa$ would be uplifting in $L$ by theorem~\ref{T.RAimplyCUplift}, a contradiction.\end{proof}

The hypothesis of statement~(1) of theorem~\ref{T.PFACoveringFragmentsWithRA} is equiconsistent with a strongly unfoldable uplifting cardinal, since every such cardinal is strongly unfoldable and uplifting in $L$, and so we may work in $L$, where $0^\sharp$ does not exist, if necessary. The hypothesis of statement~(2) of theorem~\ref{T.PFACoveringFragmentsWithRA} is equiconsistent with a strongly unfoldable cardinal, since if $\kappa$ is strongly unfoldable, then it remains so in $L$, and we may work again in $L$ and chop off the universe at the first inaccessible cardinal above $\kappa$, if necessary, which results in a strongly unfoldable cardinal in $L$ that is not uplifting there, as desired.

Results analogous to theorem~\ref{T.PFACoveringFragmentsWithRA} hold for axiom-A forcing, for $\alpha$-proper forcing with $\alpha<\omega_1$, and for semi-proper forcing by essentially the same proofs. If the existence of a strongly unfoldable uplifting cardinal is consistent, then it follows from theorem~\ref{T.PFACoveringFragmentsWithRA} that $\RA(\proper)$ is independent from the conjunction of the three forcing axioms $\PFA_{\continuum}$, $\PFA(\aleph_2\text{-preserving})$ and $\PFA(\aleph_3\text{-preserving})$. The same holds for the weak resurrection axioms, such as $\wRA(\proper)$ and $\wRA(\text{countably closed})$, and also for the axioms $\RA(\text{semi-proper})$ and $\RA(\text{axiom-A})$ and their weak counterparts $\wRA(\text{semi-proper})$ and $\wRA(\text{axiom-A})$. Meanwhile, it seems unclear to us how to show, say, that $\RA(\proper)$ can fail if $\PFA$ holds.

We conclude this paper by foreshadowing our follow-up article \cite{HamkinsJohnstone:BoldfaceResurrectionAndSuperstrongUnfoldability}, a natural continuation of this article, in which we introduce and consider the boldface analogues of the resurrection axioms, allowing a predicate $A\of\continuum$ and asking for $A^*\of\continuum^{V[g*h]}$ in $V[g*h]$ with $\<\Hc,{\in},A>\elesub\<\Hc^{V[g*h]},{\in},A^*>$. In that article, we prove the equiconsistency of the boldface resurrection axioms with the existence of a strongly uplifting cardinal, a weak form of $1$-extendibility, which we prove is the same as a superstrongly unfoldable cardinal, generalizing the weakly superstrong cardinals.

\bibliographystyle{alpha}
\bibliography{MathBiblio,HamkinsBiblio}

\end{document}